\theoremstyle{plain} 
\newtheorem{thm}{Theorem} 
\newtheorem{cor}[thm]{Corollary} 
\newtheorem{lem}[thm]{Lemma} 
\newtheorem{conj}[thm]{Conjecture} 
\newtheorem{prop}[thm]{Proposition} 
\theoremstyle{definition} 
\newtheorem{defn}[thm]{Definition} 
\newtheorem{rem}[thm]{Remark} 
\newtheorem{ex}[thm]{Example} 
\newtheorem{exs}[thm]{Examples} 
\newcommand{\iinfty}{{\mathchoice
{\begin{minipage}{.15in}\includegraphics[width=.15in]{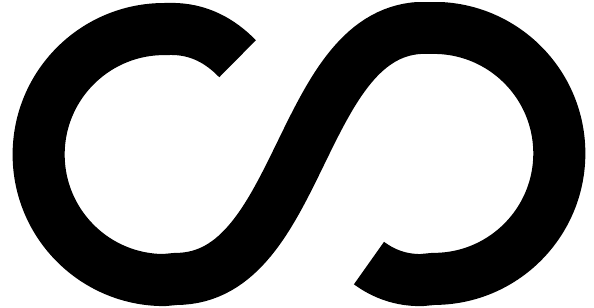}\end{minipage}}
{\begin{minipage}{.12in}\includegraphics[width=.12in]{infty2.pdf}\end{minipage}}
{\begin{minipage}{.10in}\includegraphics[width=.10in]{infty2.pdf}\end{minipage}}
{\begin{minipage}{.08in}\includegraphics[width=.08in]{infty2.pdf}\end{minipage}}
}}
\renewcommand{\int}{\operatorname{int}} 
\newcommand{\Ker}{\operatorname{Ker}} 
\newcommand{\id}{\operatorname{id}}
\newcommand{\Arf}{\mathrm{Arf}}
\newcommand{\im}{\operatorname{Im}}
\newcommand\W{\text{\sf W}} 
\newcommand\G{\text{\sf G}} 
\newcommand\sD{\text{\sf D}}
\newcommand\sL{\text{\sf L}}
\newcommand{\Z}{\mathbb{Z}} 
\newcommand{\N}{\mathbb{N}}
\newcommand{\bG}{\mathbb{G}} 
\newcommand{\bW}{\mathbb{W}} 
\newcommand{\bL}{\mathbb{L}} 
\newcommand{\bT}{\mathbb{T}}
\newcommand{\g}{\mathfrak{g}}
\newcommand{\cT}{\mathcal{T}}
\newcommand{\sra}{\twoheadrightarrow}
\newcommand{\QF}{\text{\sf{QF}}}
\newcommand{\CQF}{\text{\sf{CQF}}}
\newcommand{\SQF}{\text{\sf{SQF}}}
\newcommand{\SF}{\text{\sf{SF}}}
\newcommand{\HF}{\text{\sf{HF}}}
\newcommand{\QR}{\text{\sf{QR}}}
\newcommand{\SL}{\operatorname{SL}}
\newcommand{\z}{\mathbb Z_2}
\newcommand{\sK}{\text{\sf K}}
\begin{document} 

\title{Universal quadratic forms and untwisting Whitney towers} 

\begin{abstract} 
The first part of this paper completes the classification of Whitney towers in the $4$--ball that was started in three related papers. We provide an algebraic framework allowing the computations of the graded groups associated to geometric filtrations of classical link concordance by order $n$ (twisted) Whitney towers in the $4$--ball. Higher-order Sato-Levine invariants and higher-order Arf invariants are defined and shown to be the obstructions to framing a twisted Whitney tower. 

In the second part of this paper, a general theory of quadratic forms is developed and then specialized from the
non-commutative to the commutative to finally, the symmetric settings. The intersection invariant
for twisted Whitney towers is shown to be the universal symmetric refinement of the framed intersection invariant.

UPDATE: The results of the first six sections of this paper have been subsumed into the paper \emph{Whitney tower concordance of classical links} \cite{WTCCL}.
\end{abstract}

\author[J. Conant]{James Conant} 
\email{jconant@math.utk.edu} 
\address{Dept. of Mathematics, University of Tennessee, Knoxville, TN} 

\author[R. Schneiderman]{Rob Schneiderman} 
\email{robert.schneiderman@lehman.cuny.edu} 
\address{Dept. of Mathematics and Computer Science, Lehman College, City University of New York, Bronx, NY} 

\author[P. Teichner]{Peter Teichner} 
\email{teichner@mac.com} 
\address{Dept. of Mathematics, University of California, Berkeley, CA and} 
\address{Max-Planck Institut f\"ur Mathematik, Bonn, Germany}

\keywords{Whitney towers, twisted Whitney towers, gropes, link 
concordance, trees, quadratic refinements, Arf invariants, Milnor invariants, Sato-Levine invariants, quasi-Lie algebra} 

\maketitle 
\section{Introduction}
In \cite{CST1,CST2} we examine three filtrations of link concordance: $\bG_n$ is the set of framed links in the $3$--sphere bounding disjointly embedded class $n+1$ gropes in the $4$--ball; $\bW_n$ is the set of framed links in the $3$--sphere bounding order $n$ framed Whitney towers in the $4$--ball; and $\bW_n^\iinfty$ is the set of framed links in the $3$--sphere bounding order $n$ twisted Whitney towers in the $4$--ball. Using the intersection theory of Whitney towers, we show in \cite{CST1} that the associated graded sets 
$\G_n, \W_n$, and $\W_n^\iinfty$ are finitely generated groups under a well-defined connected sum operation. Throughout this paper we fix (and usually suppress from notation) the number $m$ of link components; and as we remark in \cite{CST1}, the filtrations $\G_n$ and $\W_n$ are the same by \cite{S1}, so we will restrict to discussing $\W_n$ and $\W^\iinfty_n$. 

Whitney towers are built from iterated `layers' of Whitney disks on immersed surfaces in $4$-manifolds, motivated by the notion that higher-order Whitney towers are `better approximations' to successful Whitney moves (which would eliminate pairs of intersections).  
Intersections which are not paired by Whitney disks determine the intersection invariants of Whitney towers, which take values in abelian groups generated by labeled oriented unitrivalent trees, modulo antisymmetry, Jacobi, and some other relations. 

The vanishing of the intersection invariant for a Whitney tower of order $n$ implies the existence of an order $(n+1)$ Whitney tower. Non-trivial higher-order intersections can represent obstructions to finding embedded underlying surfaces, providing a `measure' of the general failure of the Whitney move in dimension four. A successful Whitney move requires
a framed Whitney disk, and the intersection theory of \emph{twisted} Whitney towers succinctly incorporates the framing obstructions into the general obstruction theory by assigning
certain ``twisted'' $\iinfty$-trees to \emph{twisted} Whitney disks, and introducing appropriate relations in the target groups.
As we show below in Section~\ref{sec:invt-forms-quadratic-refinements}, the twisted theory
can be interpreted as a universal quadratic refinement of the framed theory.   

In \cite{CST1}, we defined surjective \emph{realization maps}
\[
\widetilde{R}_{n}\colon \widetilde{\cT}_{n}\twoheadrightarrow \W_{n}  \quad \text{ and } \quad R^\iinfty_{n}\colon\cT^\iinfty_n\twoheadrightarrow \W^\iinfty_n
\]
where the abelian groups $\widetilde{\cT}_{n}$ are the targets for the intersection invariants of (framed) Whitney towers, and the abelian groups $\cT^\iinfty_n$ are the targets for the intersection invariants of twisted Whitney towers, see Definition~\ref{def:tree-groups}. Both are finitely generated by certain trivalent trees and hence upper bounds on the sizes of $\W_n$ and $\W_n^\iinfty$ are obtained. On the other hand, we show in \cite{CST2} how Milnor invariants induce maps $\mu_n$, which fit into a commutative triangle
$$\xymatrix{
\cT^\iinfty_n\ar@{->>}^{R_{n}^\iinfty}[r]\ar@{->>}_{\eta_n}[dr]&{\sf W}^\iinfty_n\ar@{->>}[d]^{\mu_n}\\
&{\sD}_n
}$$
where ${\sD}_n$ is a free abelian group determined by the image of the first non-vanishing order $n$ (length $n+2$) Milnor invariants, see Section~\ref{sec:main-defs}. Moreover,  $\eta_{n}$ is a combinatorially defined map that takes unrooted trees representing
higher-order intersections to rooted trees which represent iterated commutators determined by the link longitudes. 
In \cite{CST3} we prove a conjecture formulated by J. Levine in his study of $3$-dimensional homology
cylinders which implies that $\eta_n$ is an isomorphism for $n\equiv 0,1,3\mod 4$ (Theorem~\ref{thm:isomorphisms}). 

In this paper, the fourth in the series, we will show how the geometric computations from \cite{CST1,CST2} and the resolution of the Levine conjecture \cite{CST3} can be leveraged, using only algebra, to a complete classification of both $\W_n$ and $\W_n^\iinfty$ for $n\equiv 0,1,3\mod 4$. In the remaining orders, we complete the classification up to a well-controlled 2-torsion group. We refer to \cite{CST0} for an overview of this program, including applications to string links and $3$-dimensional homology cylinders \cite{CST5}.

\subsection{The twisted Whitney tower filtration}
Via the usual correspondence between rooted oriented labeled unitrivalent trees and non-associative brackets, the map $\eta_n$ (Definition~\ref{def:eta-maps-and-D-tilde}) essentially sums over all choices of placing roots at univalent vertices of order
$n$ trees (having $n$ trivalent vertices). It turns out to determine an element in the kernel $\sD_n$ of the bracket map $\sL_1\otimes\sL_{n+1}\to\sL_{n+2}$, where $\sL_k$ is the degree $k$ part of the free $\Z$-Lie algebra (on $m$ free generators).
As we remarked above, the fact that $\eta_n$ is an isomorphism in three quarters of the cases immediately implies:
\begin{thm}\label{thm:R-013-isomorphisms}
For $n\equiv 0,1,3\mod 4$, the maps $R_n^\iinfty$ and $\mu_n$ induce isomorphisms
$$\cT_n^\iinfty\cong \W^\iinfty_n\cong \sD_n$$.
\end{thm}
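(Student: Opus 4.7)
The plan is to deduce both isomorphisms by a short diagram chase around the commutative triangle
$$\xymatrix{
\cT^\iinfty_n\ar@{->>}^{R_{n}^\iinfty}[r]\ar@{->>}_{\eta_n}[dr]&{\sf W}^\iinfty_n\ar@{->>}[d]^{\mu_n}\\
&{\sD}_n
}$$
recalled above, using only that all three maps are surjective together with the crucial input, provided by Theorem~\ref{thm:isomorphisms} (the resolution of the Levine conjecture in \cite{CST3}), that $\eta_n$ is an isomorphism in the stated range of orders.

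Concretely, I would argue as follows. Since $\eta_n=\mu_n\circ R_n^\iinfty$ is injective, any $x\in\cT_n^\iinfty$ with $R_n^\iinfty(x)=0$ satisfies $\eta_n(x)=\mu_n(0)=0$ and hence $x=0$; combined with the already-established surjectivity of $R_n^\iinfty$, this gives $R_n^\iinfty\colon\cT_n^\iinfty\xrightarrow{\cong}\W_n^\iinfty$. For the second isomorphism, given any $y\in\W_n^\iinfty$ with $\mu_n(y)=0$, the surjectivity of $R_n^\iinfty$ provides some $x$ with $R_n^\iinfty(x)=y$; then $\eta_n(x)=\mu_n(R_n^\iinfty(x))=\mu_n(y)=0$, so $x=0$ and hence $y=0$. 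Together with the surjectivity of $\mu_n$ this yields $\mu_n\colon\W_n^\iinfty\xrightarrow{\cong}\sD_n$. Commutativity of the triangle then ensures the composite $\mu_n\circ R_n^\iinfty$ recovers $\eta_n$, so the two isomorphism statements are consistent.

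There is no genuine obstacle at this stage: all of the geometric content is already packaged into the surjectivity of $R_n^\iinfty$ and $\mu_n$ proved in \cite{CST1,CST2}, while the hard algebra and combinatorics that force injectivity are absorbed into the statement that $\eta_n$ is an isomorphism. The analogous classification for $n\equiv 2\mod 4$ will be substantially harder precisely because $\eta_n$ fails to be an isomorphism in that case, so this simple chase is insufficient and a finer analysis involving $2$-torsion (the higher-order Arf invariants referred to in the abstract) will be required.
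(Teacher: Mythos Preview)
Your argument is correct and matches the paper's own proof exactly: both use the commutative triangle with surjections $R_n^\iinfty$ and $\mu_n$, together with the fact from Theorem~\ref{thm:isomorphisms} that $\eta_n$ is an isomorphism for $n\equiv 0,1,3\bmod 4$, to conclude that all three maps are isomorphisms. The paper merely states that ``the result follows'' from these ingredients, whereas you have spelled out the elementary diagram chase in detail.
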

We show in Theorem~\ref{thm:isomorphisms} below that the kernel of $\eta_{4k-2}$ is isomorphic to $\z\otimes\sL_k$. This motivates the introduction of a larger group $\sD^\iinfty_{4k-2}$ (Definition~\ref{def:D-infty}), to which $\eta_{4k-2}$ lifts as an isomorphism:
\[
\xymatrix{
&&\sD^\iinfty_{4k-2}\ar@{->>}[d]&&\\
&&\sD_{4k-2} &&\\
\cT^\iinfty_{4k-2}\ar@{>->>}[uurr]^{\eta^\iinfty_{4k-2}}\ar@{->>}[rrrr]_{R^\iinfty_{4k-2}}\ar@{->>}[urr]_{\eta_{4k-2}}&&&&\W^\iinfty_{4k-2}\ar@{->>}[ull]^{\mu_{4k-2}}\ar@{>-->>}[uull]_{\mu^\iinfty_{4k-2}}
}
\]
If we could lift $\mu_{4k-2}$ to an invariant taking values in $\sD^\iinfty_{4k-2}$, then the last case would be solved: $\cT^\iinfty_{4k-2}\cong \W^\iinfty_{4k-2}\cong \sD^\iinfty_{4k-2}$. However, at present the existence of such a lift is only known for $k=1$. 
So the most we can say is:
\begin{thm}\label{thm:twistedalpha}
There is an exact sequence (which is short exact for $k=1$):
$$\z\otimes\sL_{k}\overset{\alpha_k}{\longrightarrow}\W^\iinfty_{4k-2}\overset{\mu_{4k-2}}{\longrightarrow}\sD_{4k-2}\to 0.$$ 
Moreover, $\alpha_k$ is injective if and only if the lift $\mu^\iinfty_{4k-2}$ exists. 
\end{thm}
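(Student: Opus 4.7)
The plan is to read the theorem off a diagram chase in the commutative square displayed above, using that $\eta^\iinfty_{4k-2}$ is an isomorphism by Theorem~\ref{thm:isomorphisms} and that $R^\iinfty_{4k-2}$ is surjective. First I would define
\[
\alpha_k\colon \z\otimes\sL_k \ \hookrightarrow\ \sD^\iinfty_{4k-2}\ \xrightarrow{(\eta^\iinfty_{4k-2})^{-1}}\ \cT^\iinfty_{4k-2}\ \xrightarrow{R^\iinfty_{4k-2}}\ \W^\iinfty_{4k-2},
\]
where the first arrow is the inclusion of the kernel of the projection $\sD^\iinfty_{4k-2}\twoheadrightarrow\sD_{4k-2}$. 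Surjectivity of $\mu_{4k-2}$ is then immediate from $\eta_{4k-2}=\mu_{4k-2}\circ R^\iinfty_{4k-2}$ and the surjectivity of $\eta_{4k-2}$. For exactness at the middle term, surjectivity of $R^\iinfty_{4k-2}$ gives $\ker(\mu_{4k-2})=R^\iinfty_{4k-2}(\ker\eta_{4k-2})$, and transporting $\ker\eta_{4k-2}$ through $(\eta^\iinfty_{4k-2})^{-1}$ to $\z\otimes\sL_k\subset\cT^\iinfty_{4k-2}$ identifies $\ker(\mu_{4k-2})$ with $\im(\alpha_k)$.

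I would then establish the equivalence between injectivity of $\alpha_k$ and existence of the lift $\mu^\iinfty_{4k-2}$. A lift must satisfy $\mu^\iinfty_{4k-2}\circ R^\iinfty_{4k-2}=\eta^\iinfty_{4k-2}$; since $\eta^\iinfty_{4k-2}$ is an isomorphism this forces $\ker(R^\iinfty_{4k-2})=0$. Conversely, if $R^\iinfty_{4k-2}$ is injective (hence an isomorphism), set $\mu^\iinfty_{4k-2}:=\eta^\iinfty_{4k-2}\circ(R^\iinfty_{4k-2})^{-1}$. Since $\ker(R^\iinfty_{4k-2})\subseteq\ker\eta_{4k-2}\cong\z\otimes\sL_k$ (the inclusion coming from the factorization $\eta_{4k-2}=\mu_{4k-2}\circ R^\iinfty_{4k-2}$), injectivity of $R^\iinfty_{4k-2}$ is the same as injectivity of $\alpha_k$.

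The remaining short-exact case $k=1$ requires exhibiting the lift $\mu^\iinfty_2$ directly, and this is the main obstacle. Here $\z\otimes\sL_1\cong(\z)^m$ contributes one $\z$ per component, naturally paired with the classical Arf invariants of the knots $L_1,\dots,L_m$. I would define $\mu^\iinfty_2(L)$ by combining $\mu_2(L)$ with $(\Arf(L_1),\dots,\Arf(L_m))$, then check (a) that this tuple actually lies in $\sD^\iinfty_2$ under the identification of Definition~\ref{def:D-infty}, and (b) that each $\Arf(L_i)$ is preserved under the equivalence relation defining $\W^\iinfty_2$. Item (b) reduces to the fact that $\Arf$ is a knot concordance invariant, together with a local calculation relating the $\iinfty$-twisted $Y$-tree with both univalent labels on component $i$ to a genus-one cap whose Arf invariant equals $\Arf(L_i)$. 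Once $\mu^\iinfty_2$ is in place, the previous paragraph yields injectivity of $\alpha_1$, hence short exactness.
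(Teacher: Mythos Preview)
Your argument for the exact sequence is the same as the paper's: use the commutative triangle $\eta_{4k-2}=\mu_{4k-2}\circ R^\iinfty_{4k-2}$, identify $\Ker\eta_{4k-2}\cong\z\otimes\sL_k$ via Theorem~\ref{thm:isomorphisms}, and push this onto $\Ker\mu_{4k-2}$ by the surjectivity of $R^\iinfty_{4k-2}$. Your route through $\sD^\iinfty_{4k-2}$ and $(\eta^\iinfty_{4k-2})^{-1}$ is equivalent to the paper's direct use of part~(v) of Theorem~\ref{thm:isomorphisms}; either way one lands on $\alpha_k = R^\iinfty_{4k-2}|_{\Ker\eta_{4k-2}}$.

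You supply more than the paper does on two points. First, the paper's proof does not spell out the ``Moreover'' equivalence; your observation that $\Ker R^\iinfty_{4k-2}\subseteq\Ker\eta_{4k-2}$ forces $\Ker\alpha_k=\Ker R^\iinfty_{4k-2}$, so that injectivity of $\alpha_k$ is equivalent to $R^\iinfty_{4k-2}$ being an isomorphism and hence to the existence of $\mu^\iinfty_{4k-2}:=\eta^\iinfty_{4k-2}\circ(R^\iinfty_{4k-2})^{-1}$, is the right argument. Second, for $k=1$ the paper simply invokes Lemma~8 of \cite{CST2}, whereas you sketch the construction of $\mu^\iinfty_2$ via the classical Arf invariants of the components. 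That sketch is on target, but your item~(b)---invariance of $\Arf(L_i)$ under order~$2$ twisted Whitney tower concordance---is exactly the content of the cited lemma and is not a routine check; it requires analyzing how twisted Whitney disks of order~$1$ interact with the Arf invariant. So your proposal is correct modulo that external input, which the paper also treats as a black box.
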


\begin{conj}\label{conj:alpha-injective}
The homomorphisms $\alpha_k$ are injective for all $k\geq 1$. 
\end{conj}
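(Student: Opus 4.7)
The plan is to exploit the equivalence embedded in Theorem~\ref{thm:twistedalpha}: proving injectivity of $\alpha_k$ is equivalent to constructing a concordance invariant lift $\mu^\iinfty_{4k-2}\colon \W^\iinfty_{4k-2} \to \sD^\iinfty_{4k-2}$ of $\mu_{4k-2}$. So the strategy reduces to producing, for each $k\geq 1$, a link invariant that captures the extra $\z\otimes\sL_k$ information sitting over the image of the Milnor invariants.

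First, I would model the approach on the base case $k=1$, where the classical Arf invariants of the components provide, together with length $3$ Milnor invariants, the desired lift: the Arf invariant exactly obstructs promoting an immersed slice disk to an embedded one with framed normal bundle, which is the local content of the twisted-versus-framed dichotomy. For general $k$ the goal should be to define higher-order Arf invariants $\Arf_k\colon \W^\iinfty_{4k-2}\to \z\otimes\sL_k$ whose vanishing is equivalent to membership in $\W^\iinfty_{4k-1}$ modulo the image of $\mu_{4k-2}$, so that $(\Arf_k,\mu_{4k-2})$ assembles to the required lift $\mu^\iinfty_{4k-2}$.

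Second, I would attempt a geometric construction. Given $L\in \W^\iinfty_{4k-2}$, pick a twisted Whitney tower $\cW$ bounded by $L$, and extract from its order $4k-2$ intersection invariant the ``twisted part,'' which under the isomorphism $\eta^\iinfty_{4k-2}$ of Theorem~\ref{thm:isomorphisms} contributes a well defined element of $\z\otimes\sL_k\subset\sD^\iinfty_{4k-2}$. One must then check that this element depends only on the concordance class of $L$. Invariance under interior modifications of $\cW$ follows from the tree relations of \cite{CST1} together with the fact that, by definition of $\sD^\iinfty_{4k-2}$, the relations killed when passing to $\sD_{4k-2}$ are precisely the ones sensitive to the ``framing'' content we are retaining. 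In other words, the algebra is set up so that the candidate invariant is automatically well defined at the level of $\cT^\iinfty_{4k-2}$.

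The main obstacle, and the reason the conjecture is only settled for $k=1$, is concordance invariance of the geometric construction: one must show that if $L$ bounds an order $(4k-1)$ twisted Whitney tower, or more generally if $L$ is concordant to a link that does, then the $\z\otimes\sL_k$ component can be trivialized by modifying $\cW$ through twisted Whitney towers in a concordance. For $k=1$ this falls out of the standard spin-cobordism description of the Arf invariant. For $k\geq 2$ it seems to require a genuinely new input, perhaps a higher-order spin refinement of the twisted Whitney tower framing obstruction, or a twisted grope analogue refining the geometric constructions of \cite{CST1,CST2}. Supplying such a construction, and verifying that it descends through concordance, is the heart of the conjecture and the step I would expect to occupy essentially all of the effort.
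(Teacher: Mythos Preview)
This statement is a \emph{conjecture}, not a theorem: the paper does not prove it, and indeed states explicitly that the existence of the lift $\mu^\iinfty_{4k-2}$ is only known for $k=1$. So there is no ``paper's own proof'' to compare your proposal against.

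Your proposal correctly identifies the reformulation (via Theorem~\ref{thm:twistedalpha}) of injectivity of $\alpha_k$ as the existence of a lift $\mu^\iinfty_{4k-2}$, and your description of how such a lift would have to be built---extracting the twisted part of the intersection invariant and checking concordance invariance---is in line with how the paper frames the problem. But you yourself name the genuine gap: establishing concordance invariance of the candidate $\z\otimes\sL_k$-valued invariant for $k\geq 2$. You do not supply this step; you only say it ``seems to require a genuinely new input'' and that it ``would occupy essentially all of the effort.'' That is an honest assessment, but it means what you have written is a strategy sketch for attacking an open problem, not a proof. The paper does not close this gap either; it records the $k=1$ case (citing \cite{CST2}) and leaves the rest as Conjecture~\ref{conj:alpha-injective}.
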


Whether or not Conjecture~\ref{conj:alpha-injective} holds, we use Theorem~\ref{thm:twistedalpha} to define \emph{higher-order Arf invariants}
as ``inverses'' to the $\alpha_k$:
\begin{defn}\label{def:Arf-k}
For $\sK^{\iinfty}_{4k-2}:=\Ker\mu_{4k-2}$, define the map \emph{$\Arf_k$} by
$$\Arf_k\colon \sK^{\iinfty}_{4k-2}\to (\z\otimes\sL_k)/\Ker(\alpha_k)$$
\end{defn}
The terminology comes from the fact that for $k=1$, this is related to the classical Arf invariant of a knot. In fact, for a link $L$ our invariant $\Arf_1(L)$ contains the same information as the classical Arf invariants of its components \cite{CST2}. 
 Then we have the corollary:
\begin{cor}\label{cor:twisted-classification}
The groups $\W^\iinfty_{n}$ are classified by Milnor invariants $\mu_n$ and the above Arf invariants $\Arf_k$ for $n=4k-2$.
\end{cor}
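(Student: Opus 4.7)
The plan is to split the proof into cases according to $n\bmod 4$ and simply assemble the preceding results with the definition of $\Arf_k$.

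For $n\equiv 0,1,3\mod 4$, nothing new is needed: Theorem~\ref{thm:R-013-isomorphisms} already provides that $\mu_n\colon \W^\iinfty_n\to \sD_n$ is an isomorphism, so the Milnor invariants alone classify $\W^\iinfty_n$ and the higher-order Arf invariants do not enter.

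For $n=4k-2$, the plan is to combine Theorem~\ref{thm:twistedalpha} with Definition~\ref{def:Arf-k}. Starting from the exact sequence
$$\z\otimes\sL_{k}\overset{\alpha_k}{\longrightarrow}\W^\iinfty_{4k-2}\overset{\mu_{4k-2}}{\longrightarrow}\sD_{4k-2}\to 0,$$
I would note that $\alpha_k$ descends to a map $\bar{\alpha}_k\colon (\z\otimes\sL_k)/\Ker(\alpha_k)\to \W^\iinfty_{4k-2}$ which is injective by construction and whose image is precisely $\sK^\iinfty_{4k-2}=\Ker\mu_{4k-2}$ by exactness at $\W^\iinfty_{4k-2}$. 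Hence $\bar{\alpha}_k$ is an isomorphism onto $\sK^\iinfty_{4k-2}$, and by Definition~\ref{def:Arf-k} the invariant $\Arf_k$ is exactly its two-sided inverse. From this the classification follows at once: two elements $L_1,L_2\in\W^\iinfty_{4k-2}$ coincide if and only if $\mu_{4k-2}(L_1)=\mu_{4k-2}(L_2)$ (which places $L_1-L_2$ in $\sK^\iinfty_{4k-2}$) and $\Arf_k(L_1-L_2)=0$.

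There is no real obstacle here; the corollary is a direct packaging of Theorems~\ref{thm:R-013-isomorphisms} and~\ref{thm:twistedalpha} together with Definition~\ref{def:Arf-k}. The only conceptual point worth flagging is that $\Arf_k$ provides a sharp bijective invariant on $\sK^\iinfty_{4k-2}$ \emph{regardless} of whether Conjecture~\ref{conj:alpha-injective} is known, because the quotient by $\Ker(\alpha_k)$ is built into the definition; if the conjecture is eventually confirmed, then $\Ker(\alpha_k)=0$ and $\Arf_k$ takes values in the full group $\z\otimes\sL_k$, sharpening but not altering the classification.
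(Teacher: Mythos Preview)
Your proposal is correct and matches the paper's approach: the corollary is stated as an immediate consequence of Theorem~\ref{thm:R-013-isomorphisms}, Theorem~\ref{thm:twistedalpha}, and Definition~\ref{def:Arf-k}, and the paper gives no separate proof beyond the phrase ``Then we have the corollary.'' You have simply unpacked that implication, including the observation that $\Arf_k$ is a complete invariant on $\sK^\iinfty_{4k-2}$ regardless of Conjecture~\ref{conj:alpha-injective}, which is exactly the point.
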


So modulo the problem of determining the exact 2-torsion group which is the range of the higher-order Arf invariants, the twisted groups $\W^\iinfty_n$ are completely classified.

\subsection{The framed Whitney tower filtration}
It turns out that the analysis of the groups associated to the framed Whitney tower filtration follows by purely algebraic arguments from the twisted case, using the exact sequence
\begin{equation}\tag{\W}\label{4tseq}
0\to{\sf W}_{2n}\to{\sf W}^\iinfty_{2n}\to{\sf W}_{2n-1}\to{\sf W}^\iinfty_{2n-1}\to 0
\end{equation}
from \cite{CST1}, the resolution of Levine conjecture \cite{CST3}, and the following theorem, which will be proved in Section~\ref{sec:proof-tau-sequences}:
\begin{thm}\label{thm:Tau-sequences}
For any $n\in\N$, there are short exact sequences
\[
\xymatrix{ 
0\ar[r] & \cT_{2n} \ar[r] &  \cT^\iinfty_{2n} \ar[r] & \z \otimes \sL'_{n+1} \ar[r] & 0\\
 0\ar[r] &  \z \otimes \sL'_{n+1} \ar[r] & \widetilde\cT_{2n-1}  \ar[r] &  \cT^\iinfty_{2n-1} \ar[r] & 0
 } \]
\end{thm}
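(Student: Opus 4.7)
The plan is to construct the four maps in the two short exact sequences explicitly on generators and verify exactness at each term using the presentations of the tree groups from Definition~\ref{def:tree-groups} (AS, IHX, the boundary-twist relation $2J^\iinfty=\langle J,J\rangle$, the quadratic-refinement relation $(J+K)^\iinfty=J^\iinfty+K^\iinfty+\langle J,K\rangle$, and the relevant framing relations in $\cT_n$).

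For the even-order sequence, I define the projection $\pi_{2n}\colon\cT^\iinfty_{2n}\twoheadrightarrow\z\otimes\sL'_{n+1}$ by $J^\iinfty\mapsto 1\otimes[J]$ on twisted generators and by sending every untwisted tree to $0$. Well-definedness is an item-by-item check: AS and IHX on untwisted trees map to $0=0$; the boundary-twist relation sends both sides to $0$ (using $2\otimes[J]=0$ in $\z\otimes\sL'_{n+1}$); the quadratic-refinement relation sends both sides to $1\otimes[J]+1\otimes[K]$; rooted AS and IHX descend via the identification $[J]\in\sL'_{n+1}$. Surjectivity is immediate. For exactness at $\cT^\iinfty_{2n}$, any element of the kernel has twisted part of the form $\sum a_iJ_i^\iinfty$ with $\sum a_i[J_i]=2M$ in $\sL'_{n+1}$, and using $\langle K,K\rangle=2K^\iinfty$ this sum can be rewritten modulo untwisted trees, placing the whole element in the image of $\cT_{2n}\hookrightarrow\cT^\iinfty_{2n}$. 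Injectivity of the inclusion (exactness at $\cT_{2n}$) follows by constructing an explicit retraction $\rho_{2n}\colon\cT^\iinfty_{2n}\to\cT_{2n}$: fix an ordered $\Z$-basis $\{J_\alpha\}$ of the rooted trees in the relevant degree, send untwisted trees to themselves and each basis $J_\alpha^\iinfty$ to $0$, and extend to $(\sum c_\alpha J_\alpha)^\iinfty \mapsto \sum_{\alpha<\beta}c_\alpha c_\beta \langle J_\alpha,J_\beta\rangle$.

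For the odd-order sequence, the surjection $\widetilde\cT_{2n-1}\twoheadrightarrow\cT^\iinfty_{2n-1}$ is the natural quotient by the extra boundary-twist relations of the twisted theory that do not appear in the framed theory. These take the form $\tau(J)=0$ for a distinguished symmetric unrooted tree $\tau(J)\in\widetilde\cT_{2n-1}$ associated to each rooted $J$ of length $n+1$; each $\tau(J)$ is $2$-torsion by AS, so the assignment $1\otimes[J]\mapsto\tau(J)$ descends from $\sL'_{n+1}$ (with rooted AS and IHX on $J$ translating to unrooted AS and IHX on $\tau(J)$) to a well-defined map $j_{2n-1}\colon\z\otimes\sL'_{n+1}\to\widetilde\cT_{2n-1}$ whose image equals the kernel of the projection. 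Injectivity of $j_{2n-1}$ is verified by a retraction analogous to $\rho_{2n}$.

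The main obstacle is the well-definedness and injectivity of these retractions, since rooted and unrooted AS/IHX relations interact nontrivially and the constructions depend on choices of bases. The cleanest path is to invoke the universal quadratic refinement framework developed in Section~\ref{sec:invt-forms-quadratic-refinements}, which identifies $(\cT_{2n},\cT^\iinfty_{2n})$ as an instance of a universal symmetric quadratic refinement of a bilinear form; the even-order sequence then follows directly from this universal property, and the odd-order case follows by a parallel duality argument exploiting the $2$-torsion symmetric trees.
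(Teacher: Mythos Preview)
Your treatment of the even-order sequence is broadly on the right track: the identification of the cokernel with $\z\otimes\sL'_{n+1}$ is exactly as in the paper, and you correctly point to the universal symmetric quadratic refinement of Section~\ref{sec:invt-forms-quadratic-refinements} (specifically Remark~\ref{rem:injective} and Corollary~\ref{cor:quadratic}) as the clean way to get the injectivity of $\cT_{2n}\hookrightarrow\cT^\iinfty_{2n}$. Your ad hoc retraction $\rho_{2n}$ is more problematic than you acknowledge, however: when $n$ is odd, $\sL'_{n+1}$ has $2$-torsion (by Levine's computation of $\Ker(\sL'_{2k}\to\sL_{2k})\cong\z\otimes\sL_k$), so no $\Z$-basis exists, and if you instead work on the free group on rooted trees you must verify compatibility with rooted AS and IHX, which is precisely the content of the universal construction.

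The odd-order sequence is where there is a genuine gap. First, the kernel of $\widetilde\cT_{2n-1}\twoheadrightarrow\cT^\iinfty_{2n-1}$ is spanned by the boundary-twist elements $\langle(i,J),J\rangle$, indexed by a label $i$ \emph{and} an order $n-1$ rooted tree $J$; there is no evident canonical assignment $K\mapsto\tau(K)$ from an order $n$ rooted tree $K\in\sL'_{n+1}$ to such an element, because writing $K=(i,J)$ requires a univalent vertex adjacent to the root vertex, which need not exist, and more general decompositions $K=(K_1,K_2)$ do not yield boundary-twist trees. Second, and more seriously, neither the well-definedness of $j_{2n-1}$ on $\z\otimes\sL'_{n+1}$ nor its injectivity follows from any ``parallel duality argument''. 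The paper's proof (Section~\ref{sec:proof-tau-sequences}) uses the Levine conjecture $\eta'_{2n-1}:\cT_{2n-1}\overset{\cong}{\longrightarrow}\sD'_{2n-1}$ from \cite{CST3} together with Levine's exact sequence from \cite{L3} to set up a snake-lemma diagram identifying the cokernel of $(\z\otimes\cT_{n-1})/\Ker\Delta\hookrightarrow\z^m\otimes\sL_n$ with $\z\otimes\sL'_{n+1}$, and the already-established isomorphism $\eta:\cT^\iinfty_{2n-1}\overset{\cong}{\longrightarrow}\sD_{2n-1}$ to conclude. These inputs are essential; without them (or something of comparable strength) you have no mechanism to rule out that distinct elements of $\z\otimes\sL'_{n+1}$ map to the same class in $\widetilde\cT_{2n-1}$, and your sketched retraction does not supply one.
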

Here $\sL_k'$ denotes the degree $k$ part of the free \emph{quasi-Lie algebra}, which differs from the free Lie algebra (over $\Z$) only by replacing the usual self-annihilation relation $[X,X]=0$ with the weaker skew-symmetry relation $[X,Y]= - [Y,X]$. The various tree groups $\cT_n$ etc.~will be defined in Section~\ref{sec:main-defs}. 

The proof of the injectivity of $\cT_{2n}\hookrightarrow \cT^\iinfty_{2n}$ turns out to be the trickiest aspect of Theorem~\ref{thm:Tau-sequences}. There is a bilinear form $\langle\cdot,\cdot\rangle\colon\sL'_{n+1}\otimes\sL'_{n+1}\to \cT_{2n}$ defined by gluing the roots of two trees together and erasing the resulting bivalent vertex. This bilinear form corresponds to the intersection pairing of Whitney disks in an order $2n$ Whitney tower \cite{CST1}.
The map 
\[
\sL'_{n+1}\to \cT^\iinfty_{2n}, \quad J\mapsto J^\iinfty
\]
 corresponds to the association of ``twisted'' rooted trees to twisted Whitney disks (\cite{CST1} and Definition~\ref{def:tree-groups} below), and is a quadratic refinement of the bilinear form $\langle\cdot,\cdot\rangle$, in the sense that 
 \[
 (J_1+J_2)^\iinfty=J_1^\iinfty+J_2^\iinfty+\langle J_1,J_2\rangle.
 \]
In Section~\ref{sec:invt-forms-quadratic-refinements} we will present a novel construction of universal quadratic refinements, which when specialized to the  symmetric bilinear form $\langle\cdot,\cdot\rangle$, shows that $\cT^\iinfty_{2n}$ is in fact the \emph{universal} quadratic refinement. As a direct consequence of this general framework, we will get the injectivity of $\cT_{2n}\hookrightarrow \cT^\iinfty_{2n}$.

In order to proceed with the analysis of the framed case, notice that
the group 
\[
\sK^\mu_{2n-1}:=\Ker(\W_{2n-1} \sra \W^\iinfty_{2n-1}\cong\sD_{2n-1})
\]
 breaks the $4$-term exact sequence (\ref{4tseq}) into two short exact sequences.  Calculating $\sK^\mu_{2n-1}$ will thus allow us to compute $\W_{2n}$ and $\W_{2n-1}$ in terms of the corresponding $\iinfty$-groups.
On the other hand, the group $\sK^\mu_{2n-1}\cong\bW^\iinfty_{2n}/\bW_{2n}$ precisely measures the obstructions to framing a Whitney tower of order $2n$. 
It turns out that much of this group  ${\sK}^\mu_{2n-1}$ can be detected by \emph{higher-order Sato-Levine invariants}:

\begin{thm}\label{thm:SL-invariants}
There are  epimorphisms $\SL_{2n-1}\colon{\sf K}^\mu_{2n-1}\sra \z\otimes\sL_{n+1}$ which we shall refer to as \emph{order $2n-1$ Sato-Levine invariants.} These are isomorphisms for even $n$.
\end{thm}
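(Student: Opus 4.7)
The plan is to build $\SL_{2n-1}$ by lifting through the twisted filtration and composing the projections provided by Theorem~\ref{thm:Tau-sequences}. Given $[L]\in \sK^\mu_{2n-1}$, exactness of~(\ref{4tseq}) lifts $[L]$ to a class in $\W^\iinfty_{2n}$, which in turn lifts through the realization surjection $R^\iinfty_{2n}$ to some $w\in\cT^\iinfty_{2n}$. The first short exact sequence of Theorem~\ref{thm:Tau-sequences} then projects $w$ to $\z\otimes\sL'_{n+1}$, and the canonical Lie quotient sends it to $\z\otimes\sL_{n+1}$; the composite is defined to be $\SL_{2n-1}([L])$.

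Well-definedness rests on two checks. First, any two lifts of $[L]$ to $\W^\iinfty_{2n}$ differ by an element of $\W_{2n}$, which comes from $\cT_{2n}$ under realization; by exactness of the top row of Theorem~\ref{thm:Tau-sequences}, such a difference projects to zero in $\z\otimes\sL'_{n+1}$. Second, two lifts of a class in $\W^\iinfty_{2n}$ to $\cT^\iinfty_{2n}$ differ by an element of $\Ker R^\iinfty_{2n}$, which must project to zero in $\z\otimes\sL_{n+1}$. For $n$ even this is automatic because $2n \equiv 0 \pmod{4}$, so Theorem~\ref{thm:R-013-isomorphisms} renders $R^\iinfty_{2n}$ an isomorphism with trivial kernel; for $n$ odd one must analyze $\Ker R^\iinfty_{2n}$ directly and verify its image lies in the kernel of $\z\otimes\sL'_{n+1}\twoheadrightarrow\z\otimes\sL_{n+1}$, which is generated by the self-brackets $[X,X]$. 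Surjectivity of $\SL_{2n-1}$ is then automatic from the surjectivity of each map in the construction: given $v\in\z\otimes\sL_{n+1}$, successively lift to $\z\otimes\sL'_{n+1}$, to $\cT^\iinfty_{2n}$, and then push down to $\sK^\mu_{2n-1}$.

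For injectivity when $n$ is even, the key observation is that $n+1$ is odd, so the self-bracket relations are vacuous and $\sL'_{n+1}=\sL_{n+1}$. Hence the final Lie projection is the identity, and $\SL_{2n-1}([L])=0$ forces $w$ to lie in $\cT_{2n}\subseteq\cT^\iinfty_{2n}$ by Theorem~\ref{thm:Tau-sequences}. Since $R^\iinfty_{2n}$ is an isomorphism in this case, $[L]$ lies in the image of $\cT_{2n}\to\W_{2n}$, and therefore represents zero in $\sK^\mu_{2n-1}=\W^\iinfty_{2n}/\W_{2n}$. The main obstacle in the plan is the analysis of $\Ker R^\iinfty_{2n}$ needed for well-definedness when $n$ is odd: here one cannot invoke an isomorphism statement and must instead leverage the explicit description of this kernel via twisted trees modulo $[X,X]$-type relations, ultimately tying back to the results of \cite{CST3} on the Levine conjecture.
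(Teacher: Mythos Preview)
Your construction is morally the same map as the paper's and, once completed, would agree with it; but by routing through a lift to $\cT^\iinfty_{2n}$ you manufacture the well-definedness problem for odd $n$ that you yourself flag as ``the main obstacle'', and you do not actually resolve it. As written, the argument for odd $n$ is incomplete.

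The paper sidesteps this entirely. It defines $\SL_{2n-1}$ as the map induced on cokernels in
\[
\xymatrix{
\W_{2n}\ar@{>->}[r]\ar[d]_\cong^{\mu_{2n}}& \W^\iinfty_{2n}\ar@{->>}[r]\ar@{->>}[d]^{\mu_{2n}}& \sK^\mu_{2n-1}\ar@{-->>}[d]^{\SL_{2n-1}}\\
\sD'_{2n}\ar@{>->}[r]&\sD_{2n}\ar@{->>}[r]^{s\ell_{2n}\quad}&\z\otimes \sL_{n+1}
}
\]
The middle vertical arrow is the order $2n$ Milnor invariant, which is already a well-defined homomorphism on $\W^\iinfty_{2n}$ from \cite{CST2}; no lift to $\cT^\iinfty_{2n}$ is needed, and no analysis of $\Ker R^\iinfty_{2n}$ enters. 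Surjectivity of $\SL_{2n-1}$ follows from surjectivity of $\mu_{2n}$ and $s\ell_{2n}$, and for even $n$ the middle arrow is an isomorphism by Theorem~\ref{thm:R-013-isomorphisms}, so the five lemma gives injectivity immediately.

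Your route is repairable: by Lemma~\ref{lem:cd} the composite $\cT^\iinfty_{2n}\to\z\otimes\sL'_{n+1}\overset{p}{\to}\z\otimes\sL_{n+1}$ equals $s\ell_{2n}\circ\eta_{2n}=s\ell_{2n}\circ\mu_{2n}\circ R^\iinfty_{2n}$, which visibly factors through $\W^\iinfty_{2n}$ and hence annihilates $\Ker R^\iinfty_{2n}$ automatically. In other words, the obstacle dissolves once you notice your map is $s\ell_{2n}\circ\mu_{2n}$ in disguise---which is exactly the paper's definition.
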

These invariants are defined as a certain projection of Milnor invariants of order $2n$ (see Section~\ref{sec:framed-classification}) and they are generalizations of (the mod 2 reduction of) an invariant defined by Sato \cite{Sa} and Levine for $n=1$. 

The theorem above gives a complete computation when $n$ is even. When $n$ is odd, consider the kernel 
\[
\sK^{\SL}_{4k-3}:=\Ker(\SL_{4k-3}\colon \sK^\mu_{4k-3}\to \z\otimes\sL_{2k})
\]
 Then the unknown part of the framed filtration can be identified with the unknown part of the twisted filtration:
\begin{prop}\label{prop:canonical-iso}
The groups ${\sf K}^{\SL}_{4k-3}$ and ${\sf K}^{\iinfty}_{4k-2}$ are canonically isomorphic.
\end{prop}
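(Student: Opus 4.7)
The plan is to construct the canonical isomorphism as the map induced by the $4$-term sequence (\ref{4tseq}) and then verify bijectivity via the snake lemma.

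\textbf{Construction of $L$.} From (\ref{4tseq}) with $n=2k-1$ I extract the short exact sequence
\[
0\to\W_{4k-2}\to\W^\iinfty_{4k-2}\xrightarrow{\phi}\sK^\mu_{4k-3}\to 0.
\]
Because $\SL_{4k-3}$ is constructed as a projection of the order $4k-2$ Milnor invariants (Section~\ref{sec:framed-classification}), there is a surjection $\pi\colon\sD_{4k-2}\twoheadrightarrow\z\otimes\sL_{2k}$ satisfying $\SL_{4k-3}\circ\phi=\pi\circ\mu_{4k-2}$. Hence $\phi$ sends $\sK^\iinfty_{4k-2}=\Ker\mu_{4k-2}$ into $\sK^{\SL}_{4k-3}=\Ker\SL_{4k-3}$, yielding a canonical homomorphism $L\colon\sK^\iinfty_{4k-2}\to\sK^{\SL}_{4k-3}$.

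\textbf{Snake lemma.} I then apply the snake lemma to the commutative diagram with exact rows
\[
\xymatrix{
0\ar[r] & \sK^\iinfty_{4k-2}\ar[r]\ar[d]^L & \W^\iinfty_{4k-2}\ar[r]^{\mu_{4k-2}}\ar@{->>}[d]^\phi & \sD_{4k-2}\ar[r]\ar@{->>}[d]^\pi & 0 \\
0\ar[r] & \sK^{\SL}_{4k-3}\ar[r] & \sK^\mu_{4k-3}\ar[r]^{\SL_{4k-3}} & \z\otimes\sL_{2k}\ar[r] & 0
}
\]
where the top row is exact by Theorem~\ref{thm:twistedalpha}, and the bottom by Theorem~\ref{thm:SL-invariants} and the definition of $\sK^{\SL}_{4k-3}$. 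The middle and right verticals are surjective with kernels $\W_{4k-2}$ and $\Ker\pi$ respectively, so the snake lemma produces
\[
0\to\Ker L\to\W_{4k-2}\xrightarrow{\mu_{4k-2}}\Ker\pi\to\Cok L\to 0,
\]
whose connecting map is the restriction of $\mu_{4k-2}$ to the subgroup $\W_{4k-2}\subseteq\W^\iinfty_{4k-2}$. Therefore $L$ is an isomorphism if and only if this restricted Milnor invariant defines an isomorphism $\W_{4k-2}\cong\Ker\pi$.

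\textbf{Main obstacle.} Verifying this last isomorphism is the central technical step. The plan is to identify both sides with the image of the same quotient of the framed tree group: on the domain side, via the surjective realization $\widetilde R_{4k-2}\colon\widetilde\cT_{4k-2}\twoheadrightarrow\W_{4k-2}$ combined with the injection $\cT_{4k-2}\hookrightarrow\cT^\iinfty_{4k-2}$ from the first short exact sequence of Theorem~\ref{thm:Tau-sequences}; on the target side, via the composite $\cT_{4k-2}\hookrightarrow\cT^\iinfty_{4k-2}\twoheadrightarrow\W^\iinfty_{4k-2}\xrightarrow{\mu_{4k-2}}\sD_{4k-2}$, whose image lies in $\Ker\pi$ by the tree-level definition of $\SL_{4k-3}$ and the Levine-conjecture portion of Theorem~\ref{thm:isomorphisms}. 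The principal bookkeeping difficulty is matching the tree-level projection arising from Theorem~\ref{thm:Tau-sequences} with the map $\pi$ extracted from the definition of $\SL_{4k-3}$ as a projection of Milnor invariants.
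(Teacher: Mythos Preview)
Your approach is correct and is essentially the paper's own argument, just with the snake lemma applied to a transposed version of the same diagram. The paper writes the two short exact sequences
\[
\W_{4k-2}\hookrightarrow\W^\iinfty_{4k-2}\twoheadrightarrow\sK^\mu_{4k-3}
\quad\text{and}\quad
\sD'_{4k-2}\hookrightarrow\sD_{4k-2}\twoheadrightarrow\z\otimes\sL_{2k}
\]
as rows, with vertical maps $\mu_{4k-2}|_{\W_{4k-2}}$, $\mu_{4k-2}$, $\SL_{4k-3}$; since the left vertical is an isomorphism, the snake lemma immediately identifies the kernels of the other two verticals. You instead place the kernel sequences as the rows and the original sequences as the vertical maps, but the content is identical.

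Where you are making your life harder than necessary is in the ``Main obstacle'' paragraph. Your map $\pi$ is precisely $s\ell_{4k-2}$ (this is how $\SL_{4k-3}$ is \emph{defined} in the proof of Theorem~\ref{thm:SL-invariants}), so $\Ker\pi=\sD'_{4k-2}$ directly from the snake-lemma diagram in Definition~\ref{def:sl-maps}. And the required isomorphism $\mu_{4k-2}\colon\W_{4k-2}\overset{\cong}{\to}\sD'_{4k-2}$ is exactly the first theorem of Section~\ref{sec:framed-classification}, which follows immediately from the Levine conjecture (Theorem~\ref{thm:isomorphisms}(i)) and the commutative triangle with $\widetilde R_{4k-2}$ and $\eta'_{4k-2}$. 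There is no need to invoke Theorem~\ref{thm:Tau-sequences} or to match tree-level projections: once you recognise $\pi=s\ell_{4k-2}$, the obstacle dissolves and the proof is a one-line diagram chase.
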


From this we get a complete obstruction theory for the framed filtration:

\begin{cor}
The associated graded groups $\W_n$ are classified by Milnor invariants $\mu_n$ and in addition, Sato-Levine invariants $\SL_n$ if $n$ is odd, and finally, Arf invariants $\Arf_k$ for $n=4k-3$.
\end{cor}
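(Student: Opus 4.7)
The plan is to combine the four-term sequence $(\W)$ with the twisted classification of Corollary~\ref{cor:twisted-classification}, the Sato--Levine surjection of Theorem~\ref{thm:SL-invariants}, and the canonical identification of Proposition~\ref{prop:canonical-iso}. First, splitting $(\W)$ at the group $\sK^\mu_{2n-1}$ yields the two short exact sequences
\[
0\to \W_{2n}\to \W^\iinfty_{2n}\to \sK^\mu_{2n-1}\to 0, \qquad 0\to \sK^\mu_{2n-1}\to \W_{2n-1}\to \W^\iinfty_{2n-1}\to 0,
\]
after which one handles even and odd orders separately.

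For even $n$, the first sequence realizes $\W_n$ as a subgroup of $\W^\iinfty_n$, so any complete set of invariants for $\W^\iinfty_n$ restricts to a complete set on $\W_n$. When $n\equiv 0\pmod 4$, Corollary~\ref{cor:twisted-classification} gives that $\mu_n$ alone suffices. When $n=4k-2$, that corollary classifies $\W^\iinfty_n$ by $\mu_n$ together with $\Arf_k$, and the task is to eliminate $\Arf_k$. Since $\W_{4k-2}=\Ker(\W^\iinfty_{4k-2}\to \sK^\mu_{4k-3})$ and the connecting map restricts, via Proposition~\ref{prop:canonical-iso}, to the canonical isomorphism $\sK^\iinfty_{4k-2}\cong \sK^{\SL}_{4k-3}$, one concludes $\W_{4k-2}\cap \sK^\iinfty_{4k-2}=0$; consequently $\Arf_k$ vanishes on $\W_{4k-2}$ and drops out of the framed classification.

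For odd $n$, I use the second sequence. The quotient $\W^\iinfty_n$ is classified by $\mu_n$, so two framed links with the same $\mu_n$ differ by an element of $\sK^\mu_n$. Theorem~\ref{thm:SL-invariants} provides a surjection $\SL_n$ on $\sK^\mu_n$ that is an isomorphism exactly when $n=4k-1$, settling this case with $(\mu_{4k-1},\SL_{4k-1})$. When $n=4k-3$, the kernel $\sK^{\SL}_{4k-3}$ of $\SL_n$ is canonically identified with $\sK^\iinfty_{4k-2}$ by Proposition~\ref{prop:canonical-iso}, and this remaining piece is classified by $\Arf_k$ via Corollary~\ref{cor:twisted-classification}; transporting $\Arf_k$ along the canonical isomorphism gives the classification of $\W_{4k-3}$ by $(\mu_{4k-3},\SL_{4k-3},\Arf_k)$.

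The main obstacle is verifying that the canonical isomorphism of Proposition~\ref{prop:canonical-iso} is genuinely realized by the connecting map of $(\W)$. This compatibility both annihilates the Arf-detected part of $\W^\iinfty_{4k-2}$ upon restriction to the subgroup $\W_{4k-2}$ (handling $n=4k-2$) and accounts for the kernel of $\SL_{4k-3}$ inside $\sK^\mu_{4k-3}$ (handling $n=4k-3$); once it is recorded, what remains is formal diagram chasing on the two short exact sequences, using the known classifications of $\W^\iinfty_n$ together with the precise behavior of $\SL_n$ to show that equality of the listed invariants forces the difference of two framed links to be zero in $\W_n$.
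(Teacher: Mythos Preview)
Your argument is correct and follows essentially the same route the paper takes (the corollary is left without explicit proof there, but the pieces you assemble are exactly the ones the surrounding section provides). One small difference: for even orders the paper has already recorded the direct isomorphism $\mu_{2k}\colon \W_{2k}\overset{\cong}{\to}\sD'_{2k}$ (via $\eta'_{2k}$ and the Levine conjecture), so the even case is immediate without passing through $\W^\iinfty_{4k-2}$ and Proposition~\ref{prop:canonical-iso}; your route re-derives the injectivity of $\mu_{4k-2}$ on $\W_{4k-2}$ from the connecting map, which is valid but slightly more roundabout.
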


The missing piece in this story is a precise determination of $\sK_k:=\sK^\iinfty_{4k-2}\cong \sK^{\SL}_{4k-3}$. In Theorem~\ref{thm:isomorphisms} below, $\Ker(\eta_{4k-2})$ is identified with
$\z\otimes{\sf L}_k$ via 
$(J,J)^\iinfty\leftrightarrow 1\otimes J$, and links in $\W_{4k-2}$ realizing the elements $(J,J)^\iinfty$ are 
constructed in \cite{CST2} as iterated band sums of any knot with non-trivial Classical Arf invariant.
Since these links span the image of $\alpha_k$, 
the remaining geometric question is:
Which, if any, of these links  actually bound order $4k-1$ twisted
Whitney towers?
Conjecture~\ref{conj:alpha-injective} states that none of them do.

\subsection{The Master Diagrams}
The preceding discussion can be organized into ``master diagrams" that contain the content of the classification story. There will be two such diagrams, each covering half of the cases,  Section~\ref{sec:main-defs} contains all definitions. 
The first diagram is the simpler:
\[
\xymatrix{ 
 \cT_{4k}\ar@{>->>}[dd]_(.6){\eta_{4k}'} \ar@{>->>}[dr]^{\widetilde{R}_{4k}} \ar@{>->}[rr] & &  \cT^\iinfty_{4k} \ar@{>->>}[dd]_(.6){\eta^\iinfty_{4k}}  \ar@{>->>}[dr]^{R^\iinfty_{4k}} \ar[rr] &&\widetilde{\cT}_{4k-1} \ar@{>->>}[dd]_(.6){\widetilde{\eta}_{4k-1}}  \ar@{>->>}[dr]^{\widetilde{R}_{4k-1}} \ar@{->>}[rr] &&\cT^\iinfty_{4k-1}\ar@{>->>}[dd]_(.6){\eta^\iinfty_{4k-1}}\ar@{>->>}[dr]^{R^\iinfty_{4k-1}}&
 \\
  & \W_{4k} \ar@{>->>}[dl]^{\mu_{4k}} \ar@{>->}[rr]|(.50)\ &&  \W^\iinfty_{4k}\ar @{>->>}[dl]^{\mu_{4k}}   \ar[rr]|(.485)\ && \W_{4k-1}\ar @{>->>}[dl]^{\widetilde{\mu}_{4k-1}}   \ar@{->>}[rr]|(.485)\ && \W^\iinfty_{4k-1}\ar@{>->>}[dl]^{\mu_{4k-1}} 
  \\
\sD'_{4k} \ar@{>->}[rr] &&  \sD_{4k} \ar[rr]&&  \widetilde{\sD}_{4k-1} \ar@{->>}[rr]&&\sD_{4k-1}&
}
\]
Two of the three 4-term horizontal sequences are exact, by equation (\ref{4tseq}) above and Theorem~\ref{thm:Tau-sequences}. The four vertical $\eta$-maps are isomorphisms by Theorem~\ref{thm:isomorphisms}, which then implies that the $R$-maps and $\mu$-maps are all isomorphisms, using the surjectivity of the $R$-maps. Exactness of the horizontal $\sD$-sequence also follows, with $\sD'$ denoting the quasi-Lie bracket kernel, and $\widetilde{\sD}_{4k-1}$ the quotient of 
$\sD'_{4k-1}$ by the image under $\eta'$ of the relations in $\widetilde{\cT}_{4k-1}$. The isomorphism $\widetilde{\mu}_{4k-1}$ is the unique map making the diagram commute.

Moreover, the three horizontal sequences can each be split into two short exact sequences, with the term $\sK^\mu_{4k-1}\cong \z\otimes\sL_{2k+1}$ appearing in the middle as the cokernel of the left-hand maps and the kernel of the right-hand maps. 

The other half of cases are covered by the following diagram:
\[
\xymatrix{ 
 \cT_{4k-2}\ar@{>->>}[dd]_(.6){\eta_{4k-2}}  \ar@{>->>}[dr]^{\widetilde{R}_{4k-2}} \ar@{>->}[rr] &&  \cT^\iinfty_{4k-2} \ar@{>->>}[dd]_(.6){\eta^\iinfty_{4k-2}}  \ar@{->>}[dr]^{R^\iinfty_{4k-2}} \ar[rr] && \widetilde\cT_{4k-3} \ar@{>->>}[dd]_(.6){\widetilde\eta_{4k-3}} \ar@{->>}[dr]^{\widetilde{R}_{4k-3}} \ar@{->>}[rr] &&  \cT^\iinfty_{4k-3}\ar@{>->>}[dd]_(.6){\eta^\iinfty_{4k-3}} \ar@{>->>}[dr]^{R^\iinfty_{4k-3}}\\
& \W_{4k-2} \ar@{>->>}[dl]^{\mu_{4k-2}} \ar@{>->}[rr]|(.48)\ &&  \W^\iinfty_{4k-2}\ar @{->>}[ddl]^(.3){\mu_{4k-2}}|(.5){\phantom{X}}|(.66){\phantom{X}}  \ar@{-->}[dl]|{\mu^\iinfty_{4k-2} ?} \ar[rr]|(.52)\ && \W_{4k-3}  \ar@{-->}[dl]|{\widetilde \mu_{4k-3} ?}  \ar@{->>}[rr]|\  &&  \W^\iinfty_{4k-3} \ar@{>->>}[dl]^{\mu_{4k-3}}\\
\sD'_{4k-2} \ar@{=}[d] \ar@{>->}[rr] &&  \sD^\iinfty_{4k-2} \ar@{->>}[d]_{p_{4k-2}} \ar@{->>}[dr]^{s\ell'_{4k-2}}\ar[rr] && \widetilde {\sD}_{4k-3}  \ar@{->>}[rr] &&  \sD_{4k-3}\\
\sD'_{4k-2} \ar@{>->}[rr]\ &&  \sD_{4k-2} \ar@{}[r]|(.42)*[F-,]{\, p.b.\,} \ar@{->>}[dr]_{s\ell_{4k-2}} & \sL'_{2k}\otimes\z \ar@{->>}[d]_p \ar@{>->}[ur]&&& \\
&&&  \sL_{2k} \otimes\z&&&
 } \]

As in the first diagram, the three 4-term horizontal sequences are exact. The horizontal $\cT$- and 
$\sD$-sequences actually break into two short exact sequences with the groups $\z\otimes \sL'_{2k}$ in the middle. For the horizontal $\W$-sequence, the group $\sK^\mu_{4k-3}$ sits in the middle, which we conjecture to be isomorphic to $\z\otimes\sL'_{2k}$. At the bottom, there are two short exact sequences, both starting with $\sD'_{4k-2}$ because the diagonal square is a pullback (p.b.). The existence of the lifts $\mu^\iinfty_{4k-2}$ is equivalent to the existence of the lifts $\widetilde{\mu}_{4k-3}$ and to Conjecture~\ref{conj:alpha-injective} above. These lifts would automatically be isomorphisms by the commutativity of the diagram.

{\bf Acknowledgments:} 
This paper was written while the first two authors were visiting   the third author at the Max-Planck-Institut f\"ur Mathematik in Bonn. They all thank MPIM for its stimulating research environment and generous support. The last author was also supported by NSF grants DMS-0806052 and DMS-0757312.

\section{The main definitions}\label{sec:main-defs}
This section contains definitions of the groups and maps necessary to prove the classification theorems for ${\sf W}_n$ and ${\sf W}^\iinfty_n$.
For details and definitions on (twisted) Whitney tower concordance and Milnor invariants see \cite{CST1,CST2}. 

\begin{defn}\label{def:Lie-and-quasi-Lie} Fix an integer $m\geq 1$. 
The abelian group ${\sf L}_n=\sL_n(m)$ is the degree $n$ part of the free Lie algebra over $\Z$ on $m$ generators 
$\{X_1,X_2,\ldots,X_m\}$. Similarly, $\sL'_n=\sL'_n(m)$ is the degree $n$ part of the free {\em quasi} Lie algebra which is obtained by replacing the usual self-annihilation relation $[X,X]=0$ in a Lie algebra with the weaker skew-symmetry relation $[X,Y]= - [Y,X]$.

The group ${\sf D}_n$ is defined as the kernel of the bracket map $\sL_1 \otimes \sL_{n+1}\to \sL_{n+2}$, and ${\sf D}'_n$ is the kernel of the corresponding quasi-Lie bracket map. 
\end{defn}
Note that there is a natural projection $p\colon{\sf L}'_n\twoheadrightarrow{\sf L}_n$. Levine \cite{L3} showed the kernel of this projection is $\z\otimes {\sf L}_k$ for $n=2k$ and is trivial for odd $n$.
\begin{defn}\label{def:sl-maps} The epimorphisms $s\ell_{2k}\colon {\sf D}_{2k}\sra \z\otimes{\sL}_{k+1}$ are defined by the snake lemma applied to the diagram:
$$\xymatrix{
&&\mathbb Z_2\otimes\sL_{k+1}\ar@{>->}[d]^{sq}\\
\sD'_{2k}\ar@{>->}[d] \ar@{>->}[r]&\sL_1\otimes\sL'_{2k+1}\ar@{>->>}[d]^\cong\ar[r]&\sL'_{2k+2}\ar@{->>}[d]\\
\sD_{2k} \ar@{>->}[r]\ar@{-->>}[d]^{s\ell_{2k}}&\sL_1\otimes \sL_{2k+1}\ar@{->>}[r]&\sL_{2k+2}\\
\z\otimes \sL_{k+1}&&
}$$
The two horizontal sequences are exact by definition and the vertical sequence on the right is exact by Theorem 2.2 of \cite{L3}. The squaring map on the upper right is $sq(1\otimes a) := [a,a]$.
\end{defn}

\begin{defn}\label{def:D-infty}
The groups $\sD^\iinfty_{4k-2}$ (as well as the maps $s\ell_{4k-2}',p_{4k-2},sq^\iinfty$) are defined by the pullback diagram
\begin{equation}\tag{$\sD^\iinfty$}
\xymatrix{
\z\otimes \sL_{k} \ar@{>->}[r]^{sq^\iinfty} \ar@{=}[d] & \sD^\iinfty_{4k-2}\ar@{->>}[d]^{s\ell_{4k-2}'}\ar@{->>}[r]^{p_{4k-2}}&\sD_{4k-2}\ar@{->>}[d]^{s\ell_{4k-2}}\\
\z\otimes \sL_{k} \ar@{>->}[r]^{sq} & \Z_2\otimes \sL'_{2k}\ar@{->>}[r]^p &\Z_2\otimes \sL_{2k}
}
\end{equation}
\end{defn}

All trees considered in this paper are \emph{unitrivalent}, and equipped with \emph{vertex orientations} (cyclic orderings of the edges incident to each trivalent vertex). Such trees are graded by \emph{order}, which is the number of trivalent vertices. The univalent vertices of a tree are
labeled by elements of the index set $\{1,2,\ldots,m\}$, except for the {\em root vertex} of a rooted tree which is usually left unlabeled.

Given rooted trees $I$ and $J$, the \emph{rooted product} $(I,J)$ is the rooted tree gotten by identifying the two roots to a vertex and adjoining a rooted edge to this new vertex, with the orientation of the new trivalent vertex given by the ordering of $I$ and $J$ in $(I,J)$. The inner product $\langle I,J \rangle$ of $I$ and $J$ is defined to be the unrooted tree gotten by identifying the two rooted edges to a single edge. 

We use the notational convention that rooted trees are written in caps like $J$ whereas we use lower case letters like $t$ for unrooted trees. 
\begin{defn}\label{def:tree-groups}  The various $\cT$-groups are defined as follows:
\begin{enumerate}
\item The abelian group $\mathcal T_n$ is generated by order $n$ (unrooted) trees, modulo AS and IHX relations
\begin{figure}[h]
\centerline{\includegraphics[scale=.65]{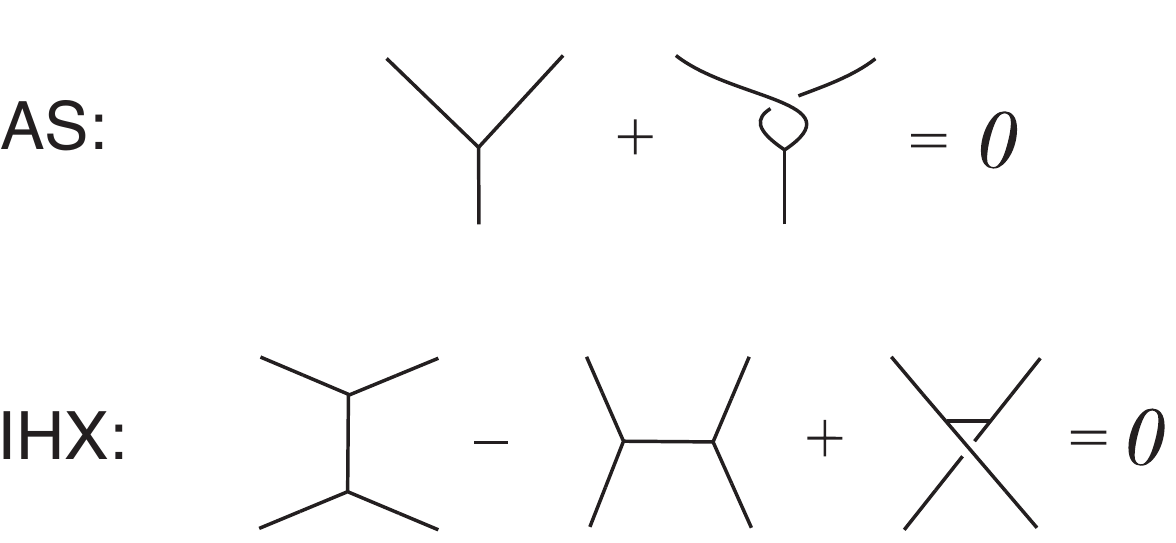}}
         \caption{Local pictures of the \emph{antisymmetry} (AS) and \emph{Jacobi} (IHX) relations
         in $\cT$. Here all trivalent orientations are induced from a fixed orientation of the plane, and univalent vertices possibly extend to subtrees which are fixed in each equation.}
         \label{fig:ASandIHXtree-relations}
\end{figure}
\item The group $\widetilde{\mathcal T}_{2n}$ is defined to be $\mathcal T_{2n}$, whereas $\widetilde{\mathcal T}_{2n-1}$ is
by definition the quotient of $\mathcal T_{2n-1}$ by the \emph{framing relations}. These framing relations are defined as the image of homomorphisms 
\[
\Delta_{2n-1}:\Z_2\otimes\cT_{n-1}\rightarrow \cT_{2n-1}
\]
which is defined for generators $t\in\cT_{n-1}$ by $\Delta (t):=\sum_{v\in t} \langle i(v),(T_v(t),T_v(t))\rangle$,
where the sum is over all univalent vertices $v$ of $t$, with $i(v)$ the original label of $v$, and $T_v(t)$ denotes the rooted tree gotten by replacing $v$ with a root.
\item 
 The group $\cT^{\iinfty}_{2n-1}$ is the quotient of $\widetilde{\cT}_{2n-1}$ by the \emph{boundary-twist relations}: 
\[
\langle  (i,J),J \rangle \,=\, i\,-\!\!\!\!\!-\!\!\!<^{\,J}_{\,J}\,\,=\,0
\] 
Here $J$ ranges over all order $n-1$ rooted trees (and the first equality is just a reminder of notation).
\item The group $\cT^{\iinfty}_{2n}$ is gotten from $\widetilde{\cT}_{2n}=\cT_{2n}$ by including order $n$ $\iinfty$-trees as new generators. These are rooted trees as above, except that the root carries the label $\iinfty$. In addition to the IHX- and AS-relations on unrooted trees, we introduce the 
following new relations:
$$
J^\iinfty=(-J)^\iinfty\quad\quad
 2\cdot J^\iinfty=\langle J,J\rangle 
 $$
 as well as the local twisted IHX-relations
 \[
  I^\iinfty=H^\iinfty+X^\iinfty- \langle H,X\rangle 
\]
 \end{enumerate}
\end{defn}
These new relations will be explained `algebraically' in our last section via the theory of universal quadratic refinements; see \cite{CST1} for their geometric meanings. 

\begin{rem}
By definition of the free quasi-Lie algebra, the groups $\sL'_{n+1}$ are generated by rooted trees of order $n$ and the only relations are antisymmetry and the Jacobi identity. These are equivalent to the AS- and IHX-relations above if these are applied locally in rooted trees. Moreover, the groups $\sL_1\otimes \sL'_{n+1}$ are presented exactly the same way, except that the root vertex in any generating tree has also a label from the set $\{1,\dots,m\}$. 
\end{rem}

\begin{defn}\label{def:eta-maps-and-D-tilde} The various $\eta$-maps are defined as follows:
\begin{enumerate}
\item As in \cite{L2}, the homomorphism $\eta'_n\colon \cT_n\to\sD'_n$ is defined by summing over all possible ways of adding a root to an unrooted tree. This can be described in terms of quasi-Lie algebras as follows: For $v$ a univalent vertex of an order $n$ tree
$t$, denote by $B_v(t)\in\sL'_{n+1}$ the Lie bracket of generators $X_1,X_2,\ldots,X_m$ determined by the formal bracketing of indices
which is gotten by considering $v$ to be a root of $t$. Then $\eta_n(t):=\sum_{v\in t} X_{\ell(v)}\otimes B_v(t)$, denoting the label of a univalent vertex $v$ by $\ell(v)\in\{1,2,\ldots,m\}$. It was shown in \cite{L2} that this element lies in $\sD'_n$, the kernel of the bracketing map. 

\item The homomorphism $\eta_n : \cT^\iinfty_n \to \sD_n$ is defined on generators as follows: For unrooted trees, we take the above formula for $\eta'_n$ but viewed in Lie algebras, rather than quasi-Lie algebras. For rooted trees $J$, we set:
$$
\eta_n(J^\iinfty):=\frac{1}{2}\eta_n(\langle J,J \rangle)
$$
This expression lies in $\sL_1 \otimes \sL_{n+1}$ because the coefficient of $\eta_n(\langle J,J \rangle)$ is even and the division by two is well-defined because this is a torsion-free group. In fact, $\eta_n$ is a surjection onto $\sD_n$ by Lemma~\ref{lem:eta-infty-well-defined}.
\item The group $\widetilde{\sf D}_{2k-1}$ is defined to be the quotient of ${\sf D}'_{2k-1}$ by $\eta'_{2k-1}(\im \Delta_{2k-1})$. Thus there is an induced map $\widetilde{\eta}_{2k-1}\colon \widetilde{\cT}_{2k-1}\to \widetilde{\sD}_{2k-1}$.
\end{enumerate}
\end{defn}
\begin{lem}\label{lem:eta-infty-well-defined}
The homomorphism $\eta_n:\cT^\iinfty_n\rightarrow \sD_n$ is well-defined.
\end{lem}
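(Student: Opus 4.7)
The map $\eta_n$ is prescribed on generators of $\cT^\iinfty_n$, so the content is to check that each defining relation maps to zero and that the division by $2$ in the formula $\eta_n(J^\iinfty) := \frac{1}{2}\eta_n(\langle J,J\rangle)$ is well-defined. On unrooted tree generators the formula is Levine's $\eta'_n$ followed by the natural projection $\sD'_n\sra \sD_n$, so AS and IHX are respected and the image lies in $\sD_n$ for every $n$. What remains is (a) the even-$n$ analysis of the $\iinfty$-generators and their relations, and (b) the odd-$n$ framing and boundary-twist relations.

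For (a), note first that $\sL_1\otimes\sL_{n+1}$ is torsion-free (each factor is a free $\Z$-module), so the division by $2$ will make sense once we verify that $\eta_n(\langle J,J\rangle)\in 2(\sL_1\otimes\sL_{n+1})$. The tree $\langle J,J\rangle$ is obtained by merging two root edges into a \emph{single} edge, with no new trivalent vertex created, so the involution interchanging the two copies of $J$ is orientation-preserving on the oriented tree. Hence for any leaf $w$ in the first copy and its mirror $w'$ in the second copy, the rooted trees obtained by re-rooting at $w$ and at $w'$ are isomorphic, giving $B_w=B_{w'}$ and a contribution $2X_{\ell(w)}\otimes B_w$ per pair. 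Once well-defined, the three $\iinfty$-relations are immediate: $J^\iinfty=(-J)^\iinfty$ follows from $\langle -J,-J\rangle=\langle J,J\rangle$; $2J^\iinfty=\langle J,J\rangle$ is the defining formula; and the twisted IHX relation reduces, by bilinearity and symmetry of $\langle\cdot,\cdot\rangle$ together with the rooted IHX identity $I=H\pm X$ in $\sL'_{n+1}$, to the expansion
\[
\langle I,I\rangle \;=\; \langle H,H\rangle + \langle X,X\rangle \pm 2\langle H,X\rangle,
\]
which after applying $\eta_n$ and dividing by $2$ matches $\eta_n(H^\iinfty+X^\iinfty-\langle H,X\rangle)$ (the sign of the cross term is fixed by the convention that selects the twisted IHX relation).

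For (b), each such relation is an unrooted tree of the form $\langle i(v),(T,T)\rangle$ or $\langle (i,J),J\rangle$: it contains a central trivalent vertex $V$ whose three incident branches are one distinguished labeled leaf and two identical rooted subtrees. Rooting at the distinguished leaf produces the bracket $[B(T),B(T)]$ (respectively $[B(J),B(J)]$) of an element of $\sL_{n+1}$ with itself, which vanishes in the free \emph{Lie} algebra---this is the one place we essentially use Lie rather than quasi-Lie coefficients. The involution exchanging the two identical subtrees now transposes two of the three branches at $V$, and by the AS relation this is orientation-\emph{reversing}. Consequently $B_w=-B_{w'}$ for mirror leaves, and those contributions cancel pairwise, so $\eta_n$ sends each such generator to zero in $\sD_n$.

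The one delicate point is the sign analysis of the mirror involution: whether or not the central edge of the relevant tree carries a trivalent vertex whose cyclic order is permuted by the swap is what distinguishes the two cases, yielding divisibility by $2$ in one (justifying the formula $\frac{1}{2}\eta_n(\langle J,J\rangle)$) and outright cancellation in the other (killing the framing and boundary-twist relations).
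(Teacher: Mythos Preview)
Your argument is correct and follows the same approach as the paper: reduce AS/IHX on unrooted trees to Levine's $\eta'_n$, verify the three $\iinfty$-relations in even order by expanding $\langle I,I\rangle=\langle H-X,H-X\rangle$, and kill the odd-order relations by observing that rooting at the distinguished leaf gives $[J,J]=0$ in $\sL$ while the remaining rootings cancel in pairs. Your framing of the pairwise cancellation via the sign of the mirror involution (orientation-preserving on $\langle J,J\rangle$, orientation-reversing on $\langle(i,J),J\rangle$ because of the central trivalent vertex) is a nice way to organize the two cases, and your explicit verification that $\eta_n(\langle J,J\rangle)$ is divisible by~$2$ fills in a step the paper asserts in the definition but does not argue in the proof itself.
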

\begin{proof}
In \cite{L2} it was shown that the map $\eta'_n:\cT_n\to \sD'_n:=\Ker\{\sL'_1 \otimes \sL'_{n+1}\to\sL'_{n+2}\}$ is a well-defined surjection. It follows that $\eta_n$ vanishes on the usual IHX and AS relations, and maps into $\sD_n$. So it suffices to check that $\eta_n$ respects the other 
relations in $\cT^\iinfty_n$.

First consider the odd order case. To show $\eta_n$ is well defined, it suffices to observe that 
$\eta_n(\langle (i,J),J\rangle)=0$, since placing a root at the \mbox{$i$-labeled} vertex determines a trivial symmetric bracket in $\sL_{n+1}$,
and all the other Lie brackets come in canceling pairs corresponding to putting roots on vertices in the isomorphic
pair of $J$ sub-trees.

Now considering the even order case, we have
$$
\eta_n((-J)^\iinfty)=\frac{1}{2}\eta_n(\langle -J,-J\rangle)=\frac{1}{2}\eta_n(\langle J,J\rangle)=\eta_n(J^\iinfty)
$$
and
$$
\eta_n(2J^\iinfty)=2\cdot\eta_n(J^\iinfty)=2\cdot\frac{1}{2}\eta_n(\langle J,J\rangle)=\eta_n(\langle J,J\rangle).
$$
And for $I$, $H$, and $X$, the terms in a Jacobi relation $I=H-X$, we have
$$
\begin{array}{ccl}
\eta_n(I^\iinfty)  & =  & \frac{1}{2}\eta_n(\langle I,I\rangle)  \\
  & =  & \frac{1}{2}\eta_n(\langle H-X,H-X\rangle)  \\
  & =  &  \frac{1}{2}(\eta_n(\langle H,H\rangle)-2\cdot\eta_n(\langle H,X\rangle)+\eta_n(\langle X,X\rangle))\\
  & = & \eta_n(H^\iinfty+X^\iinfty-\langle H, X\rangle )
\end{array}
$$
where the second equality comes from applying the Jacobi relation to the sub-trees $I$ inside the inner product $\langle I,I\rangle$ and expanding.
\end{proof}

\begin{lem}\label{lem:lifted eta}
There is a lift $\eta_{4k-2}^\iinfty$ of $\eta_{4k-2}$ 
\[
\xymatrix{
& \sD^\iinfty_{4k-2}\ar@{->>}[d]^{p_{4k-2}}\\
\cT^\iinfty_{4k-2} \ar@{->>}[r]^{\eta_{4k-2}} \ar@{->>}[ru]^{\eta^\iinfty_{4k-2}} & \sD_{4k-2} 
}\]
such that $\eta_{4k-2}^\iinfty((J,J)^\iinfty) = sq^\iinfty(1\otimes J)$ for all rooted trees $J\in \sL_k$.
\end{lem}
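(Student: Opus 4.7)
The strategy is to exploit the pullback structure defining $\sD^\iinfty_{4k-2}$. By the universal property of the pullback, producing a lift $\eta^\iinfty_{4k-2}$ of $\eta_{4k-2}$ through $p_{4k-2}$ is equivalent to producing a second homomorphism $\bar\eta\colon \cT^\iinfty_{4k-2} \to \Z_2 \otimes \sL'_{2k}$ satisfying the compatibility $p \circ \bar\eta = s\ell_{4k-2} \circ \eta_{4k-2}$; the lift is then $\eta^\iinfty_{4k-2} := (\eta_{4k-2}, \bar\eta)$.

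First I would define $\bar\eta$ on generators: set $\bar\eta(t) = 0$ for unrooted order-$(4k-2)$ trees $t$, and $\bar\eta(J^\iinfty) = 1 \otimes B'(J)$ for order-$(2k-1)$ rooted trees $J$, where $B'(J) \in \sL'_{2k}$ is the quasi-Lie bracket determined by $J$. Well-definedness on relations is a short check: AS/IHX on unrooted trees are trivially respected; $J^\iinfty = (-J)^\iinfty$ holds because $B'(-J) = -B'(J)$ and the sign disappears modulo $2$; $2 J^\iinfty = \langle J, J\rangle$ holds because both sides are sent to $0$; and the twisted IHX $I^\iinfty = H^\iinfty + X^\iinfty - \langle H, X\rangle$ holds because quasi-Lie Jacobi gives $B'(I) = B'(H) - B'(X)$ and again the sign is washed out in $\Z_2 \otimes \sL'_{2k}$.

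Next I would check the compatibility $p \circ \bar\eta = s\ell_{4k-2} \circ \eta_{4k-2}$. On an unrooted tree $t$ both sides vanish: $\eta_{4k-2}(t)$ is the image of $\eta'_{4k-2}(t) \in \sD'_{4k-2}$ under the isomorphism $\sL_1 \otimes \sL'_{4k-1} \cong \sL_1 \otimes \sL_{4k-1}$, and since $\eta'_{4k-2}(t)$ already brackets to $0$ in $\sL'_{4k}$, the snake-lemma definition of $s\ell_{4k-2}$ returns $0$. On a rooted generator $J^\iinfty$, I must show $s\ell_{4k-2}(\eta_{4k-2}(J^\iinfty)) = 1 \otimes B(J) \in \Z_2 \otimes \sL_{2k}$. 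Since $\sL_1 \otimes \sL'_{4k-1}$ is torsion-free ($4k-1$ is odd), the unique lift of $\eta_{4k-2}(J^\iinfty) = \tfrac{1}{2}\eta_{4k-2}(\langle J, J\rangle)$ to this group is $\tfrac{1}{2}\eta'_{4k-2}(\langle J, J\rangle)$. An iterated-Jacobi computation on $\langle J, J\rangle$, exploiting its $\Z/2$ swap symmetry and the fact that $\eta'_{4k-2}(\langle J, J\rangle)$ brackets to $0$ in $\sL'_{4k}$, shows that the bracket of this half-lift equals $\pm [B'(J), B'(J)] = \pm sq(1 \otimes B(J))$ in $\sL'_{4k}$; taking the $sq$-preimage modulo $2$ yields $1 \otimes B(J)$, as required. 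This combinatorial identity is the main obstacle of the proof.

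Finally the condition $\eta^\iinfty_{4k-2}((J, J)^\iinfty) = sq^\iinfty(1 \otimes J)$ reduces, via the pullback description $sq^\iinfty(1 \otimes J) = (0, 1 \otimes [B'(J), B'(J)])$, to checking both coordinates. The first coordinate matches because $\eta_{4k-2}((J, J)^\iinfty) = \tfrac{1}{2}\eta_{4k-2}(\langle (J,J), (J,J)\rangle) = 0$: every re-rooting of $\langle (J,J), (J,J)\rangle$ produces an iterated bracket containing the subbracket $[B(J), B(J)]$, which vanishes in the Lie algebra $\sL$. The second coordinate matches by the very definition of $\bar\eta$: $\bar\eta((J,J)^\iinfty) = 1 \otimes B'((J,J)) = 1 \otimes [B'(J), B'(J)]$.
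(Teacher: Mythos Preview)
Your proposal is correct and follows essentially the same route as the paper. The paper obtains the lift by noting that the map $\cT^\iinfty_{4k-2}\to \z\otimes\sL'_{2k}$ (your $\bar\eta$, which is exactly the cokernel map from Theorem~\ref{thm:Tau-sequences} sending $J^\iinfty\mapsto 1\otimes J$) and $\eta_{4k-2}$ fit into a commutative square with $s\ell_{4k-2}$ and $p$, hence factor through the pullback $\sD^\iinfty_{4k-2}$; your compatibility check $p\circ\bar\eta = s\ell_{4k-2}\circ\eta_{4k-2}$ is precisely the content of the paper's Lemma~\ref{lem:cd}, including the iterated-IHX argument identifying the bracket of the half-lift with $[B'(J),B'(J)]$. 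The final verification on $(J,J)^\iinfty$ is also identical in both treatments.
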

\begin{proof}
To construct $\eta^\iinfty_{4k-2}$ it suffices to observe that we have a commutative diagram
$$
\xymatrix{\cT^\iinfty_{4k-2}\ar[r]^{\eta_{4k-2}}\ar[d]&{\sD}_{4k-2}\ar[d]\\
\z\otimes {\sL}'_{2k}\ar[r]^p&\z\otimes{\sL}_{2k}
}
$$
which gives rise to a map to the pullback $\sD_{4k-2}^\iinfty$ from  diagram ${\sf D}^\iinfty$ above. To calculate $\eta^\iinfty_{4k-2}((J,J)^\iinfty)$ notice that $\eta_{4k-2}((J,J)^\iinfty)=0$ and $s\ell^\prime_{4k-2}((J,J)^\iinfty)=1\otimes (J,J)=sq(1\otimes J)$. So $\eta^\iinfty_{4k-2}((J,J)^\iinfty)=sq^\iinfty(1\otimes J)$. 
\end{proof}

\begin{rem}
The superscripts in our $\eta$-maps reflect those of their {\em target} groups.
\end{rem}

\section{The main isomorphisms}\label{sec:main-isos}
\begin{thm}\label{thm:isomorphisms} The following maps are isomorphisms
\begin{enumerate}
\item $\eta'_n\colon \cT_n\to \sD'_n$
\item $\widetilde{\eta}_{2k-1}\colon\widetilde{\cT}_{2k-1}\to \widetilde{\sD}_{2k-1}$
\item $\eta_{2k-1}\colon\cT^\iinfty_{2k-1}\to\sD_{2k-1}$
\item $\eta_{4k}\colon\cT^\iinfty_{4k}\to\sD_{4k}$
\item $\Ker(\eta_{4k-2})\to \z\otimes{\sf L}_k$ where $(J,J)^\iinfty\mapsto 1\otimes J$
\item $\eta^\iinfty_{4k-2}\colon \cT^\iinfty_{4k-2}\to\sD^\iinfty_{4k-2}$
\end{enumerate}
\end{thm}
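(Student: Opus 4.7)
The plan is to deduce (6) from (5) together with the lifting result of Lemma~\ref{lem:lifted eta} by a standard five-lemma argument; essentially all the content already sits in the earlier parts of the theorem and in the pullback definition of $\sD^\iinfty_{4k-2}$.

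First I would write down two short exact sequences and fit $\eta^\iinfty_{4k-2}$ into a morphism between them. The top row is the kernel--cokernel sequence of $\eta_{4k-2}$: surjectivity is Lemma~\ref{lem:eta-infty-well-defined}, and statement (5) identifies $\Ker(\eta_{4k-2})$ with $\z \otimes \sL_k$ via $(J,J)^\iinfty\mapsto 1\otimes J$, so
\[
0\to \z\otimes\sL_k \to \cT^\iinfty_{4k-2} \xrightarrow{\eta_{4k-2}} \sD_{4k-2}\to 0.
\]
The bottom row is the horizontal row of the pullback square defining $\sD^\iinfty_{4k-2}$, which is short exact because $p$ is surjective and pullbacks of surjections of abelian groups remain surjective:
\[
0\to \z\otimes\sL_k \xrightarrow{sq^\iinfty} \sD^\iinfty_{4k-2} \xrightarrow{p_{4k-2}} \sD_{4k-2}\to 0.
\]

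Next I would verify that placing $\eta^\iinfty_{4k-2}$ in the middle, the identity on $\sD_{4k-2}$ on the right, and the isomorphism from (5) on the left yields a commutative ladder. The right square commutes by the construction of the lift in Lemma~\ref{lem:lifted eta}, namely $p_{4k-2}\circ\eta^\iinfty_{4k-2} = \eta_{4k-2}$. The left square commutes by the formula $\eta^\iinfty_{4k-2}((J,J)^\iinfty) = sq^\iinfty(1\otimes J)$ established in that same lemma, which is exactly $sq^\iinfty$ composed with the iso of (5). Applying the five-lemma then yields that $\eta^\iinfty_{4k-2}$ is an isomorphism.

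The main obstacle here is not this step but the prior statement (5), whose proof is the substantive identification of $\Ker(\eta_{4k-2})$ with $\z\otimes\sL_k$; once (5) and Lemma~\ref{lem:lifted eta} are in hand, (6) reduces to a formal diagram chase. I would therefore expect no real difficulty in completing this step, beyond checking the short-exactness of the pullback row, which is routine.
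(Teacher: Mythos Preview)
Your argument for part (vi) is correct and is essentially identical to the paper's own proof: both set up the same ladder of short exact sequences, use Lemma~\ref{lem:lifted eta} for commutativity of the right square and the formula $\eta^\iinfty_{4k-2}((J,J)^\iinfty)=sq^\iinfty(1\otimes J)$ for the left square, invoke part (v) to identify the left vertical map as an isomorphism, and conclude by the five-lemma. The only cosmetic difference is that the paper writes the top-left term as the span $\langle (J,J)^\iinfty\rangle$ rather than immediately replacing it by $\z\otimes\sL_k$.
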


\begin{proof}
Part (i) is the Levine conjecture, proven in \cite{CST3}, and (ii) follows by definition. 

\noindent {\bf Proof of (iii)}:
Consider the following diagram (which is commutative by definition of $\eta$):
$$\xymatrix{
0\ar[r]&\operatorname{span}\{\langle i,(J,J)\rangle\}\ar[r]\ar[d]^{\eta'}&\mathcal T_{2k-1}\ar[r]\ar[d]_\cong^{\eta'}&\mathcal T_{2k-1}^\iinfty\ar[r]\ar[d]^{\eta}&0\\
0\ar[r]&\mathbb Z_2^m\otimes {\sf L}_k\ar[r]^{sq}&\sD'_{2k-1}\ar[r]&\sD_{2k-1}\ar[r]&0\\
}
$$
The bottom row is exact by Corollary 2.3 of \cite{L3}. The top row is exact by definition and the middle map $\eta'$ is an isomorphism by (i). This implies the left-hand restriction $\eta'$ is one to one, and it is onto since $\eta'(\langle i,(J,J)\rangle)=X_i\otimes J$. Therefore, the right-hand map $\eta$ is an isomorphism by the 5-lemma. 

\noindent {\bf Proof of (iv)}:
Consider the following diagram (which is commutative by Lemma~\ref{lem:cd} below)
$$\xymatrix{
0\ar[r]&\cT_{4k}\ar[r]\ar[d]_\cong^{\eta'_{4k}}&\cT_{4k}^\iinfty\ar[r]\ar[d]^{\eta_{4k}}&\z\otimes{\sL}'_{2k+1}\ar[r]\ar[d]^p_{\cong}&0\\
0\ar[r]&{\sD}'_{4k}\ar[r]&{\sD}_{4k}\ar[r]^{s\ell_{4k}\,\,\,\,\,\,\,\,\,\,\,\,}&\z\otimes{\sL}_{2k+1}\ar[r]&0
}
$$
The bottom horizontal sequence is exact by Definition~\ref{def:sl-maps}. The top one is a case of Theorem~\ref{thm:Tau-sequences}, proven in Section~\ref{sec:proof-tau-sequences}.
Since ${\sL}_{2k+1}\cong {\sL}'_{2k+1}$ it follows that $\eta_{4k}$ is an isomorphism.

\noindent {\bf Proof of (v)}: 
Using Theorem 2.2 of \cite{L3}, Lemma~\ref{lem:cd} below and the fact that $\Ker \eta'=0$, we have a commutative diagram of exact sequences:
$$
\xymatrix{
&\Ker\eta_{4k-2}\ar[r]^\cong\ar@{>->}[d]&\mathbb Z_2\otimes {\sf L}_{k}\ar@{>->}[d]^{sq}\\
 \mathcal T_{4k-2}\ar@{>->}[r]\ar[d]_\cong^{\eta'_{4k-2}}& \mathcal T^\iinfty_{4k-2}\ar@{->>}[r]\ar@{->>}[d]^{\eta_{4k-2}}& \mathbb Z_2\otimes{\sf L}'_{2k}\ar@{->>}[d]^p\\
\sD'_{4k-2}\ar@{>->}[r]& \sD_{4k-2}\ar@{->>}[r]^(.4){s\ell_{4k-2}}&\mathbb Z_2\otimes{\sf L}_{2k}
}
$$
Hence $\Ker\eta_{4k-2}\cong \z\otimes{\sL}_k$ as desired. It remains to show the isomorphism is given by $(J,J)^\iinfty\mapsto 1\otimes J$. Clearly $(J,J)^\iinfty\in \Ker \eta_{4k-2}$, and it maps to $1\otimes [J,J]=sq(1\otimes [J])$ under the map $\cT^\iinfty_{4k-2}\to \z\otimes \sL'_{2k}$. 

\noindent {\bf Proof of (vi)}:
Diagram $\sD^\iinfty$ and Lemma~\ref{lem:lifted eta} imply that we have a commutative diagram:
$$
\xymatrix{
0\ar[r]&\langle(J,J)^\iinfty\rangle\ar[r]\ar@{->>}[d]&\mathcal T^{\iinfty}_{4k-2}\ar[r]^{\eta_{4k-2}}\ar[d]_{\eta^\iinfty_{4k-2}}&{\sf D}_{4k-2}\ar[r]\ar@{=}[d]&0\\
0\ar[r]&\z\otimes{\sf L}_{k}\ar[r]^(.6){sq^\iinfty_{4k-2}}&{\sD}^\iinfty_{4k-2}\ar[r]^{p_{4k-2}}&{\sD}_{4k-2}\ar[r]&0
}
$$
where the vertical left hand map sends $(J,J)^\iinfty$ to $1\otimes J$.
The right-hand square commutes by the main commutative triangle of Lemma~\ref{lem:lifted eta}, whereas the left square commutes by the calculation $\eta^\iinfty_{4k-2}((J,J)^\iinfty)=sq^\iinfty_{4k-2}(1\otimes J)$, as in Lemma~\ref{lem:cd}. 
Thus $\eta^\iinfty_{4k-2}$ is an isomorphism by the 5-lemma.
\end{proof}

\begin{lem}\label{lem:cd}
Sending $J^\iinfty$ to $1\otimes J$ gives a commutative diagram
$$\xymatrix{
\cT_{2k}^\iinfty\ar[r]\ar[d]^{\eta_{2k}}&\z\otimes{\sL}'_{k+1}\ar[d]^p\\
{\sD}_{2k}\ar[r]^{s\ell_{2k}\,\,\,\,\,\,\,\,\,\,\,\,}&\z\otimes{\sL}_{k+1}
}
$$
\end{lem}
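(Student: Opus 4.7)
The plan is to first verify that the top map $\cT^\iinfty_{2k}\to\z\otimes\sL'_{k+1}$ in question (sending $J^\iinfty\mapsto 1\otimes J$ and unrooted generators to $0$) is well-defined, which is a direct check against the defining relations of $\cT^\iinfty_{2k}$ from Definition~\ref{def:tree-groups}: the AS and IHX relations on rooted trees match the quasi-Lie relations in $\sL'_{k+1}$; the symmetry $J^\iinfty=(-J)^\iinfty$ becomes $1\otimes J=1\otimes(-J)$ mod $2$; the relation $2J^\iinfty=\langle J,J\rangle$ holds because both sides vanish in the target; and the twisted IHX reduces to ordinary Jacobi modulo unrooted terms. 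Commutativity on unrooted $t\in\cT_{2k}$ is then immediate: the top route gives $0$ by definition, and the bottom route gives $s\ell_{2k}(\eta'_{2k}(t))=0$ because $\eta'_{2k}(t)\in\sD'_{2k}=\Ker s\ell_{2k}$.

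For the nontrivial case $J^\iinfty$, the top route produces $p(1\otimes J)=1\otimes J$. To compute the bottom route, I would unpack the snake-lemma definition of $s\ell_{2k}$: for $y\in\sD_{2k}\subset\sL_1\otimes\sL_{2k+1}$, the value $s\ell_{2k}(y)=1\otimes J'$ is characterized by the requirement that the unique lift of $y$ to $\sL_1\otimes\sL'_{2k+1}$ (an isomorphism since $2k+1$ is odd) has quasi-Lie bracket $sq(1\otimes J')=[J',J']$ in $\sL'_{2k+2}$. Thus commutativity at $J^\iinfty$ reduces to showing that the quasi-Lie bracket of $\eta_{2k}(J^\iinfty)=\tfrac12\eta_{2k}(\langle J,J\rangle)$ equals $[J,J]$ in $\sL'_{2k+2}$; the sign is immaterial in the $2$-torsion subgroup $\z\otimes\sL_{k+1}$.

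Using the involution of $\langle J,J\rangle$ swapping the two copies of $J$, we write $\eta_{2k}(\langle J,J\rangle)=2Y$ where $Y=\sum_{v\in J_1}X_{\ell(v)}\otimes B_v(\langle J,J\rangle)$ runs over the leaves of just one copy, so $\eta_{2k}(J^\iinfty)=Y$. Writing $B_v(\langle J,J\rangle)=J_v|_{r=J}$, where $J_v$ is $J$ re-rooted at $v$ viewed as a quasi-Lie bracket in $X_1,\dots,X_m$ together with a formal generator $r$ marking the original root, the needed bracket equals $E(J)|_{r=J}$ with
\[
E(J):=\sum_{v}[X_{\ell(v)},J_v]\in\sL'_{k+2}(m+1).
\]
The key step is the universal identity $E(J)=[J,r]$, which I would prove by induction on the order of $J$. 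The base case $J=X_i$ gives $E(J)=[X_i,r]$ directly. For $J=[A,B]$, the re-rooting formula yields $J_v=A_v|_{r_A=[B,r]}$ for $v\in A$ and $J_v=B_v|_{r_B=[r,A]}$ for $v\in B$, with the signs fixed by the vertex orientation at the trivalent vertex adjacent to the root of $J$. Applying the inductive hypothesis to $E(A)$ and $E(B)$ then gives $E(J)=[A,[B,r]]+[B,[r,A]]$, which collapses by one application of the quasi-Lie Jacobi relation to $-[r,[A,B]]=[J,r]$. Substituting $r=J$ yields $[J,J]$ in $\sL'_{2k+2}$, completing the verification.

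The main obstacle is the inductive identity $E(J)=[J,r]$, especially the careful tracking of vertex-orientation signs at the trivalent vertex adjacent to the root; once this is established, everything else is a mechanical unwinding of the snake-lemma definition of $s\ell_{2k}$ and the defining relations of $\cT^\iinfty_{2k}$.
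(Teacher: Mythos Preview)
Your proposal is correct and follows essentially the same route as the paper's own proof. The paper handles the unrooted case identically (landing in $\sD'_{2k}=\Ker s\ell_{2k}$) and for $J^\iinfty$ asserts that the bracket of $\eta_{2k}(J^\iinfty)$ is $(J,J)$ ``by pushing the central root of $(J,J)$ to one side using IHX relations''; your inductive identity $E(J)=[J,r]$ is precisely a clean formalization of that pushing argument, and your explicit check that the top map is well-defined fills in a detail the paper leaves implicit.
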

\begin{proof}
First, we need a better handle on the map $sl_{2k}\colon\sD_{2k}\to \mathbb Z_2\otimes \sL_{k+1}$. Let $Z\in\sD_{2k}$ and pick a lift $Z'\in \sL_1\otimes\sL'_{2k+1}$. Tracing through the snake lemma diagram in Definition~\ref{def:sl-maps}, one sees that the bracket of $Z'$ is a sum of commutators $[J_i,J_i]$, and that $sl_{2k}(Z)=\sum 1\otimes J_i$.

Consider a tree $t\in\mathcal T^\iinfty_{2k}$ which maps to zero in $\mathbb Z_2\otimes {\sf L}'_{k+1}$ by definition.
Mapping $t$ down by $\eta^\iinfty_{2k}$, we end up in $\sD_{2k}'$ and hence the Sato-Levine maps vanishes.

Now consider $J^\iinfty\in\mathcal T^\iinfty_{2k}$. Then $\eta_{2k}(J^\iinfty)$ doubles $J$ to $\langle J,J\rangle$ and sums over putting a root at all of the leaves of one copy of $J$. The result represents naturally an element in $\sL_1\otimes \sL_{k+1}$.
Calculating the bracket has the effect of summing over putting a root near all of the leaves on one copy of $J$ in $\langle J,J\rangle$, which modulo IHX is equal to $(J,J)$. To see this requires pushing the central root of $(J,J)$ to one side using IHX relations.

Thus $sl_{2k}(\eta_{2k}(J^\iinfty))=1\otimes J$ which is equal to mapping $J^\iinfty$ right and then down.
\end{proof}

\section{The twisted Whitney tower classification}\label{sec:twisted-classification}
Using Theorem~\ref{thm:isomorphisms}, together with \cite{CST1,CST2}, we are ready to compute the groups ${\sf W}^\iinfty_n$.

\begin{proof}[Proof of Theorem~\ref{thm:R-013-isomorphisms}]
Consider the commutative diagram
$$\xymatrix{
\cT^\iinfty_n\ar@{->>}^{R_{n}^\iinfty}[r]\ar@{->>}_{\eta_n}[dr]&{\sf W}^\iinfty_n\ar@{->>}[d]^{\mu_n}\\
&{\sD}_n
}$$
where the realization map $R_n$ was defined and shown to be surjective in \cite{CST1}. The Milnor invariant $\mu_n$ was defined in \cite{CST2} where we also showed that the diagram commutes. By Theorem~\ref{thm:isomorphisms}, $\eta_n$ is an isomorphism for
$n\equiv 0,1,3\mod 4$ and the result follows.
\end{proof}

\begin{proof}[Proof of Theorem~\ref{thm:twistedalpha}]
It suffices to show there is an epimorphism 
\[
\alpha_k\colon\z\otimes{\sf L}_{k}\twoheadrightarrow{\sf K}^\iinfty_{4k-2}:=\Ker(\mu_{4k-2}\colon {\sf W}^\iinfty_{4k-2}\to \sD_{4k-2})
\]
This follows from the above commutative diagram, where $\eta_{4k-2}$ has the nontrivial kernel $\z\otimes\sL_{k}$, which therefore maps onto $\Ker(\mu_{4k-2})$. The fact that $\alpha_1$ is injective follows from Lemma~8 of \cite{CST2}.
\end{proof}

\section{The framed Whitney tower classification}\label{sec:framed-classification}
In this section we will compute the groups $\W_n$. The first observation is that,  mildly abusing notation, one can define Milnor invariants $\mu_n$ via the composition $\W_n\to\W^\iinfty_n\to\sD_n$. When $n$ is odd, this is the composition of two surjections, so it is surjective. However it is not surjective in the even case, and the computation of $\W_{2k}$ reflects that:
 
 \begin{thm}
The total Milnor invariant  $\mu_{2k}$ is a monomorphism with image $\sD'_{2k}<\sD_{2k}$, and the realization map 
$\widetilde{R}_{2k}\colon \cT_{2k}\to \W_{2k}$ is an isomorphism.
\end{thm}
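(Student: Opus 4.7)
The plan is a direct diagram chase using the master diagrams in the introduction. I will work with the commutative square
\[
\xymatrix{
\cT_{2k} \ar[r]^{\widetilde R_{2k}} \ar[d]_{\eta'_{2k}} & \W_{2k} \ar[d]^{\mu_{2k}} \\
\sD'_{2k} \ar[r]^{\iota} & \sD_{2k}
}
\]
where $\iota\colon \sD'_{2k}\hookrightarrow \sD_{2k}$ is the monomorphism appearing in the snake lemma of Definition~\ref{def:sl-maps} (it is just the inclusion of Lie bracket kernels, and is injective because $\sL'_{2k+1}\cong \sL_{2k+1}$ by Levine~\cite{L3}). Commutativity of the square comes from the main commutative triangle $\cT^\iinfty_n \to \W^\iinfty_n \to \sD_n$ of \cite{CST1,CST2} used in the proof of Theorem~\ref{thm:R-013-isomorphisms}, together with the observation that the restriction of $\eta_{2k}$ to $\cT_{2k}\subset \cT^\iinfty_{2k}$ factors as $\iota\circ \eta'_{2k}$ directly from the formulas in Definition~\ref{def:eta-maps-and-D-tilde}. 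Here $\mu_{2k}$ is understood on $\W_{2k}$ via the inclusion $\W_{2k}\hookrightarrow \W^\iinfty_{2k}$ coming from the sequence (\W).

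With the square in hand, the argument is purely formal. The left vertical map $\eta'_{2k}$ is an isomorphism by Theorem~\ref{thm:isomorphisms}(i) (the Levine conjecture, proven in \cite{CST3}), and the top horizontal map $\widetilde R_{2k}$ is surjective by \cite{CST1}. Consequently the composition $\mu_{2k}\circ \widetilde R_{2k} = \iota\circ \eta'_{2k}$ is injective, which forces $\widetilde R_{2k}$ itself to be injective. Combined with surjectivity, this shows $\widetilde R_{2k}$ is an isomorphism. The map $\mu_{2k}$ is then identified with $\iota\circ \eta'_{2k}\circ \widetilde R_{2k}^{-1}$, and is therefore a monomorphism whose image is $\iota(\sD'_{2k})$, which we identify with $\sD'_{2k}$ itself.

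I do not expect a real obstacle here; the theorem is a clean packaging of what has already been set up. The only thing to verify with care is commutativity of the square against the conventions of the master diagrams, which reduces to checking that the Lie-algebra definition of $\eta_{2k}$ on unrooted trees agrees with the quasi-Lie-algebra definition of $\eta'_{2k}$ after applying the projection $\sL_1\otimes \sL'_{2k+1}\to \sL_1\otimes \sL_{2k+1}$. The genuine content of the statement is supplied by the Levine conjecture and the geometric surjectivity of the realization map; the diagram chase merely bundles them together.
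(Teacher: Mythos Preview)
Your proposal is correct and takes essentially the same approach as the paper. The paper presents the argument as a commutative triangle $\cT_{2k}\twoheadrightarrow \W_{2k}\to \sD'_{2k}$ with diagonal $\eta'_{2k}$, whereas you write it as a square landing in $\sD_{2k}$ via the inclusion $\iota$; but the content is identical---surjectivity of $\widetilde R_{2k}$ together with the Levine conjecture forces everything.
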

 \begin{proof}
Using \cite{CST1,CST2} we again have commutative diagram of epimorphisms
 $$\xymatrix{
\cT_{2k}\ar@{->>}^{\widetilde{R}_{2k}}[r]\ar@{>->>}_{\eta'_{2k}}[dr]&{\sf W}_{2k}\ar[d]^{\mu_{2k}}\\
&{\sD}'_{2k}
}$$ 
By Theorem~\ref{thm:isomorphisms} (the Levine conjecture), $\eta'_{2k}$ is an isomorphism which proves our claim.
\end{proof}

\begin{proof}[Proof of Theorem \ref{thm:SL-invariants}]
The kernel ${\sf K}^\mu_{2k-1}$ of the Milnor invariant $\mu_{2k-1}:\W_{2k-1}\to \sD_{2k-1}$ sits in the middle of the above $4$-term exact sequence:
$$
\xymatrix{\W_{2k}\ar@{>->}[r]&\W^\iinfty_{2k}\ar[rr]\ar@{->>}[dr]&&\W_{2k-1}\ar@{->>}[r]&\W_{2k-1}^\iinfty\\
&&{\sf K}^\mu_{2k-1}\ar@{>->}[ur]&&
}
$$
Hence, the left-hand exact sequence gives rise to the following commutative diagram, which implies the existence of the dashed epimorphism on the right.
$$\xymatrix{
{\sf W}_{2k}\ar@{>->}[r]\ar[d]_\cong& {\sf W}^\iinfty_{2k}\ar@{->>}[r]\ar@{->>}[d]& {\sf K}^\mu_{2k-1}\ar@{-->>}[d]^{\SL_{2k-1}}\\
\sD'_{2k}\ar@{>->}[r]&\sD_{2k}\ar@{->>}[r]&\z\otimes \sL_{k+1}
}$$
When $k$ is even, the middle vertical map in the above diagram is an isomorphism, implying that $\SL_{2k-1}$ is an isomorphism as well.
\end{proof}
\begin{cor}
The realization maps $\widetilde{R}_{2k-1}$ are isomorphisms for even $k$.
\end{cor}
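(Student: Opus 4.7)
The plan is to apply the five-lemma to a commutative diagram of short exact sequences in which $\widetilde R_{2k-1}$ is the middle vertical arrow and the outer vertical arrows are isomorphisms for even $k$.

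The bottom row will be the short exact sequence obtained by splicing the $4$-term sequence $(\W)$ at $\sK^\mu_{2k-1}$:
\[
0 \to \sK^\mu_{2k-1} \to \W_{2k-1} \to \W^\iinfty_{2k-1} \to 0.
\]
The top row will be the odd-order sequence from Theorem~\ref{thm:Tau-sequences}:
\[
0 \to \z\otimes\sL'_{k+1} \to \widetilde\cT_{2k-1} \to \cT^\iinfty_{2k-1} \to 0.
\]
Naturality of the realization maps proved in \cite{CST1} assembles these into a commutative diagram whose middle and right vertical arrows are $\widetilde R_{2k-1}$ and $R^\iinfty_{2k-1}$ respectively, with the left vertical arrow uniquely induced.

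The right vertical map $R^\iinfty_{2k-1}$ is an isomorphism by Theorem~\ref{thm:R-013-isomorphisms}, since $2k-1$ is odd. For the left vertical map, note that since $k$ is even, $k+1$ is odd, so by Levine's result (recalled after Definition~\ref{def:Lie-and-quasi-Lie}) the canonical projection $p\colon\sL'_{k+1}\sra\sL_{k+1}$ is an isomorphism; and by Theorem~\ref{thm:SL-invariants}, for even $k$ the higher-order Sato-Levine map $\SL_{2k-1}\colon\sK^\mu_{2k-1}\to\z\otimes\sL_{k+1}$ is likewise an isomorphism. A diagram chase will identify the composition of the left vertical arrow with $\SL_{2k-1}$ as (a sign twist of) the Levine projection $p$, forcing the left vertical arrow itself to be an isomorphism. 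The five-lemma then gives the conclusion.

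The main obstacle is this last naturality check: one must verify that a generator $1\otimes K\in\z\otimes\sL'_{k+1}$, sent via the Tau-sequence into a framing-obstruction element of $\widetilde\cT_{2k-1}$ and then realized in $\sK^\mu_{2k-1}\cong\W^\iinfty_{2k}/\W_{2k}$, has Sato-Levine invariant equal to $1\otimes p(K)\in\z\otimes\sL_{k+1}$. Unpacking the definition of $\SL_{2k-1}$ from the proof of Theorem~\ref{thm:SL-invariants} as the composition of $\mu_{2k}$ with $s\ell_{2k}$, together with the explicit form of the left map in Theorem~\ref{thm:Tau-sequences} and the commutativity from Lemma~\ref{lem:cd}, should reduce this to a direct tree-level computation. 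Once that compatibility is established, the five-lemma closes the argument.
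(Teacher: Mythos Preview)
Your proposal is correct and follows essentially the same approach as the paper: both apply the five-lemma to the diagram with top row the odd sequence from Theorem~\ref{thm:Tau-sequences} and bottom row the right half of the $\W$-sequence. The only difference is cosmetic: the paper pre-identifies $\sK^\mu_{2k-1}$ with $\z\otimes\sL_{k+1}$ (via $\SL_{2k-1}$) and $\W^\iinfty_{2k-1}$ with $\sD_{2k-1}$ (via $\mu_{2k-1}$), writing the left vertical as an equality rather than spelling out the compatibility check you flag as ``the main obstacle.''
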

\begin{proof}
We use the exact sequence $0\to \z\otimes\sL'_{k+1}\to\widetilde{\cT}_{2k-1}\to \cT^\iinfty_{2k-1}\to 0$, and the fact that $\sL'_{k+1}=\sL_{k+1}$ for even $k$. This gives us a commutative diagram
$$
\xymatrix{
\z\otimes \sL_{k+1}\ar@{=}[d]\ar@{>->}[r]&\cT_{2k-1}\ar@{->>}^{\widetilde{R}_{2k-1}}[d]\ar@{->>}[r]&\cT^\iinfty_{2k-1}\ar[d]^\cong\\
\z\otimes \sL_{k+1}\ar@{>->}[r]&\W_{2k-1}\ar@{->>}[r]&\sD_{2k-1}
}
$$
which implies our claim by the 5-lemma.
\end{proof}

\begin{proof}[Proof of Proposition~\ref{prop:canonical-iso}]
The following commutative diagram explains why ${\sf K}^\iinfty_{4k-2}\cong {\sf K}^{\SL}_{4k-3}$.
$$\xymatrix{
&{\sf K}^\iinfty_{4k-2}\ar@{>-->>}[r]\ar@{>->}[d]&{\sf K}^{\SL}_{4k-3}\ar@{>->}[d]\\
{\sf W}_{4k-2}\ar@{>->}[r]\ar[d]_\cong& {\sf W}^\iinfty_{4k-2}\ar@{->>}[r]\ar@{->>}[d]& {\sf K}^\mu_{4k-3}\ar@{->>}[d]\\
\sD'_{4k-2}\ar@{>->}[r]&\sD_{4k-2}\ar@{->>}[r]&\z\otimes \sL_{2k}
}$$
\end{proof}

\section{Proof of Theorem~\ref{thm:Tau-sequences}}\label{sec:proof-tau-sequences}
The injectivity of the map $\cT_{2n}\hookrightarrow \cT^\iinfty_{2n}$ follows from Remark~\ref{rem:injective} and Corollary~\ref{cor:quadratic}, proven below.  The cokernel is then spanned by $\iinfty$-trees, with relations coming from the defining relations of $\cT^\iinfty_{2n}$, with non-$\iinfty$ trees set to $0$: $J^\iinfty=(-J)^\iinfty, I^\iinfty=H^\iinfty+X^\iinfty$, and $2J^\iinfty=0$. Thus the cokernel is isomorphic to $\z\otimes \sL'_{n+1}$.

The odd order sequence is shown to be exact as follows. Recall that the framing relations in $\widetilde{\cT}_{2n-1}$ are the image of $\Delta\colon \z\otimes \cT_{n-1}\to \cT_{2n-1}$. Thus we get an exact sequence as in the top of the diagram below, the middle exact sequence is Corollary 2.3 of \cite{L3}.
$$
\xymatrix{
(\z\otimes\cT_{n-1})/\Ker\Delta\ar@{>->}[r]^(.65){\Delta}\ar@{>->}[d]&\cT_{2n-1}\ar@{->>}[r]\ar[d]_\cong^{\eta'}&\widetilde{\cT}_{2n-1}\ar@{->>}[d]\\
\z^m\otimes\sL_n \ar@{=}[d] \ar@{>->}[r]^{sq} &{\sD}'_{2n-1} \ar@{>->}[d]\ar@{->>}[r]&{\sD}_{2n-1} \ar@{>->}[d]\\
\z\otimes\sL_1 \otimes\sL_n\ar@{>->}[r]^{sq} &\sL_1 \otimes\sL'_{2n}\ar@{->>}[r]&\sL_1 \otimes\sL_{2n}
}
$$
The map on the right is defined via the factorization $\widetilde{\cT}_{2n-1}\twoheadrightarrow {\cT}^\iinfty_{2n-1}\overset{\eta}{\longrightarrow} \sD_{2n-1}.$ So by definition the right-hand square commutes, and induces the left-hand vertical map. In fact, we claim that the induced map $\z\otimes\cT_{n-1}\to\z^m\otimes\sL_n$ factors as 
\[
\z\otimes\cT_{n-1}\overset{1\otimes\eta'}{\longrightarrow}\z\otimes{\sD}'_{n-1}\to\z^m\otimes\sL_n,
\]
 with the right hand map induced by $\sD'_{n-1}\to\Z^m\otimes\sL'_n\twoheadrightarrow\Z^m\otimes \sL_n$. To see this, let $t\in\cT_{n-1}$ and compute
\begin{eqnarray*}
\eta'\left(\Delta (1\otimes t)\right) &= &\eta'\left(\sum_{v\in t} \langle i(v),(T_v(t),T_v(t))\rangle\right)\\
&=&\sum_{v\in t} X_{i(v)}\otimes (T_v(t),T_v(t))\\
&=&\sum_{v\in t} X_i(v)\otimes sq(T_v(t))\\
&=&sq(1\otimes\eta')(1\otimes t)
\end{eqnarray*}
Now we claim that for all orders $n$, there is an exact sequence $$\z\otimes\sD'_{n-1}\to\z^m\otimes\sL_n\to\z\otimes \sL'_{n+1}\to 0.$$
This is clear if $n$ is odd, by tensoring the defining exact sequence for $\sD'_{n-1}$ with $\z$. If $n$ is even, then this follows since there is a surjection $\sD_{n-1}'\twoheadrightarrow \sD_{n-1}$ and $\sL'_{n+1} \cong\sL_{n+1}$. Therefore, the commutative diagram above supports a vertical short exact sequence on the left:
$$
\xymatrix{
&&\z\otimes\sL'_{n+1}\ar@{>-->}[d]\\
(\z\otimes\cT_{n-1})/\Ker\Delta\ar@{>->}[r]^(.65){\Delta}\ar@{>->}[d]&\cT_{2n-1}\ar@{->>}[r]\ar@{>->>}[d]^{\eta'}&\widetilde{\cT}_{2n-1}\ar@{->>}[d]\\
\z^m\otimes\sL_n\ar@{->>}[d]\ar@{>->}[r]&{\sD}'_{2n-1}\ar@{->>}[r]&{\sD}_{2n-1}\\
\z\otimes \sL'_{n+1}&&
}
$$
which gives us the indicated map on the right. Furthermore, we have already shown (without circularity) that $\eta\colon\cT^\iinfty_{2n-1}\to\sD_{2n-1}$ is an isomorphism, so the right-hand exact sequence is precisely the exact sequence we're interested in.


\section{Invariant forms and quadratic refinements}\label{sec:invt-forms-quadratic-refinements}
In this section we explain an algebraic framework into which our groups 
$\cT(m)$ and $\cT^\iinfty(m)$ fit naturally. 
In Lemma~\ref{lem:inner product} we show that the 
$\cT(m)$-valued inner product $\langle\,\cdot\,,\,\cdot\,\rangle$ on the free quasi-Lie algebra is universal. Then a general theory of quadratic refinements is developed and specialized from the non-commutative to the commutative to finally, symmetric settings. In Corollary~\ref{cor:quadratic} we show that $\cT_{2n}^\iinfty(m)$ is the home for the universal quadratic refinement of the $\cT_{2n}(m)$-valued inner product $\langle \ , \ \rangle$.

We work over the ground ring of integers but all our arguments go through for any commutative ring. We also only discuss the case of finite generating sets $\{1,\dots,m\}$, even though everything holds in the infinite case.

\subsection{A universal invariant form}
Via the identification of non-associative brackets with vertex-oriented labeled rooted trivalent trees explained in Section~\ref{sec:main-isos}, the free quasi-Lie algebra $\sL'(m)=\oplus_n \sL_n(m)$ on $m$ generators (Definition~\ref{def:eta-maps-and-D-tilde}) can be identified with the free abelian group on $\bL(m)$, modulo the AS and IHX relations. Here $\bL(m)$ denotes the set of (isomorphism classes of) rooted trees, trivalent and vertex-oriented as usual, with one specified root and the other univalent vertices labeled by elements in the set $\{1,\dots,m\}$.

Similarly, the group $\cT(m)=\oplus_n \cT_n(m)$ of Definition~\ref{def:tree-groups} is given by the free abelian group on $\bT(m)$, modulo the AS and IHX relations, where $\bT(m)$ is the set of (isomorphism classes of) unrooted trees, trivalent and vertex-oriented, with all univalent vertices labeled by elements in the set $\{1,\dots,m\}$. Gluing along the root gives the {\em inner product} from Section~\ref{sec:main-defs}:
\[
\langle \ , \ \rangle:\  \bL(m) \times \bL(m)\longrightarrow \bT(m)
\]
which is both {\em symmetric} and {\em invariant}: $\langle I,J \rangle = \langle J,I \rangle $ and 
\[
\langle (I,J),K \rangle = \langle I,(J,K) \rangle 
\]
where $(I,J)$ is the {\em rooted product} from Section~\ref{sec:main-defs} which turns into the {\em Lie bracket} $[I,J]$ when considered on $\sL'(m)$. This invariance follows by rotating the relevant planar tree by 120 degrees. 
The inner product extends uniquely to a bilinear, symmetric, invariant pairing
\[
\langle \ , \ \rangle:\ \sL'(m) \times \sL'(m) \longrightarrow \cT(m)
\]
This follows simply from observing that the AS and IHX relations hold on both sides and are preserved by the inner product. The following lemma shows that this inner product is {\em universal} for quasi-Lie algebras with $m$ generators. Here a quasi-Lie algebra is the same as a Lie algebra, except that the anti-symmetry relation takes the form $[X,Y] = -[Y,X]$ which is more appropriate when working over $\Z$ (or in characteristic 2). 
 
\begin{lem} \label{lem:inner product} Let $\g$ be a quasi-Lie algebra with a  bilinear, symmetric, invariant pairing $\lambda:\g \times \g \to M$ into some abelian group $M$. If $\alpha:\sL(m)\to\g$ is a quasi-Lie homomorphism (given by $m$ arbitrary elements in $\g$) there exists a {\em unique} linear map $\Psi:\cT(m)\to M$ such that  for all $X,Y\in \sL'(m)$
\[
\lambda(\alpha(X), \alpha(Y) ) = \Psi( \langle X,Y \rangle )
\]
\end{lem}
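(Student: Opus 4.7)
The plan is to define $\Psi$ on the generating set $\bT(m)$ of $\cT(m)$, check that it descends through the AS and IHX relations, and extend linearly. First extend the given assignment on the $m$ generators to a quasi-Lie homomorphism $\alpha:\sL'(m)\to\g$ via the universal property of the free quasi-Lie algebra. For an unrooted tree $t\in\bT(m)$ and any edge $e$ of $t$, cutting along $e$ produces an unordered pair of rooted trees $\{X_e,Y_e\}\subset\bL(m)$ (the two halves inherit a rooted edge from $e$). Set
\[
\Psi(t) \;:=\; \lambda\bigl(\alpha(X_e),\alpha(Y_e)\bigr);
\]
symmetry of $\lambda$ makes this independent of the ordering.

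The first step is to verify independence of the edge $e$. Any two edges of $t$ are connected by a sequence of moves across pairs of edges meeting at a common trivalent vertex; each such move rewrites a decomposition of $t$ of the form $\langle(I,J),K\rangle$ as $\langle I,(J,K)\rangle$. Since $\alpha$ preserves brackets and $\lambda$ is invariant,
\[
\lambda\bigl([\alpha(I),\alpha(J)],\alpha(K)\bigr) \;=\; \lambda\bigl(\alpha(I),[\alpha(J),\alpha(K)]\bigr),
\]
and composing over the sequence gives edge-independence of $\Psi(t)$.

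Next I would check that $\Psi$ vanishes on the AS and IHX relators. Both relations are local: AS modifies the cyclic order at a single trivalent vertex, and IHX changes three trees agreeing away from a distinguished interior edge. Using edge-independence, cut $t$ along any edge outside the local region; the relation becomes an instance of anti-symmetry or the Jacobi identity inside one of the two rooted halves, which $\alpha$ respects as a quasi-Lie homomorphism, so bilinearity of $\lambda$ pushes this through to $\Psi$. In the low-order degenerate cases when no edge lies outside the local picture (e.g.\ the Y-tree), cutting at any edge directly reduces the relation to anti-symmetry or Jacobi in $\g$.

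Extending linearly, we obtain $\Psi:\cT(m)\to M$. The desired identity $\lambda(\alpha(X),\alpha(Y))=\Psi(\langle X,Y\rangle)$ is then built in: cut $\langle X,Y\rangle$ along the edge created by the inner product to recover $(X,Y)$. Uniqueness is automatic, since every generator of $\cT(m)$ arises as $\langle X,Y\rangle$ for some $X,Y\in\bL(m)$, forcing the value of $\Psi$ on all generators. The main obstacle I anticipate is checking edge-independence cleanly, particularly making sure the moves across adjacent edges compose consistently when traversing large trees; but this reduces, step by step, to a single application of the invariance of $\lambda$.
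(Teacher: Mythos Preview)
Your proof is correct and follows essentially the same approach as the paper: define $\Psi$ by cutting an unrooted tree at an edge, use symmetry and invariance of $\lambda$ to get edge-independence via moves between adjacent edges, and then observe that AS and IHX are preserved because $\alpha$ is a quasi-Lie homomorphism. Your write-up is somewhat more detailed (explicitly handling the low-order degenerate cases and spelling out how to localize the AS/IHX check away from the cut edge), but the strategy and key ideas are the same.
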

\begin{proof}
The uniqueness of $\Psi$ is clear since the inner product map is onto. For existence, we first construct a map $\psi:\bT(m)\to M$ as follows. Given a tree $t\in \bT(m)$ pick an edge in $t$ to split $t= \langle X,Y \rangle $ for rooted trees $X,Y\in \bL(m)$. Then set
\[
\psi(t) := \lambda(\alpha(X), \alpha(Y))
\]
If we split $t$ at an adjacent edge, this expression stays unchanged because of the symmetry and invariance of $\lambda$. However, one can go from any given edge to any other by a sequence of adjacent edges, showing that $\psi(t)$ does not depend on the choice of splitting.

It is clear that $\psi$ can be extended linearly to the free abelian group on $\bT(m)$ and since $\alpha$ preserves AS and IHX relations by assumption, this extension factors through a map $\Psi$ as required.
\end{proof}

\begin{rem} \label{rem:grading} Recall that the groups $\sL'(m)$ and $\cT(m)$ are graded by {\em order} and that the inner product preserves that grading (in the obvious sense) since the number of trivialent vertices just adds.
Moreover, $\sL'(m)[1]$ is actually a graded Lie algebra, i.e.\ the Lie bracket preserves the grading when shifted up by one (so order is replaced by the number of univalent non-root vertices).

Let's assume in the above lemma that the groups $\g,M$ are $\Z$-graded, $\g[1]$ is a graded Lie algebra and that $\lambda,\alpha$ preserve those gradings. Then the proof shows that the resulting linear map $\Psi$ also preserves the grading.
\end{rem}

\subsection{Non-commutative quadratic groups}
The rest of this section describes a general setting for relating our groups $\cT^\iinfty_{2n}$ that measure the intersection invariant of twisted Whitney towers to a universal (symmetric) quadratic refinement of the $\cT_{2n}$-valued inner product.
We first give a couple of definitions that generalize those introduced by Hans Baues in \cite{B1} and \cite[\S 8]{B2} and Ranicki in \cite[p.246]{R}. These will lead to the most general notion of quadratic refinements for which we construct a universal example. Later we shall specialize the definitions from {\em non-commutative} to {\em commutative} and finally, to {\em symmetric} quadratic forms and construct universal examples in all cases.
\begin{defn}\label{def:quadratic group} A {\em (non-commutative) quadratic group}
\[
\frak M = (M_{e} \overset{h}{\to} M_{ee} \overset{p}{\to} M_{e})
\]
consists of two groups $M_e, M_{ee}$ and two homomorphism $h,p$ satisfying 
\begin{enumerate}
\item $M_{ee}$ is abelian,
\item the image of $p$ lies in the center of $M_e$,
\item $hph=2h$. 
\end{enumerate}
$\frak M$ will serve as the range of the (non-commutative) quadratic forms defined below. We will write both groups additively since in most examples $M_e$ turns out to be commutative. A morphism $\beta: \frak M \to \frak{M}'$ between quadratic groups is a pair of homomorphisms 
\[
\beta_e: M_e\to M_e'  \quad \text{ and } \quad \beta_{ee}: M_{ee}\to M_{ee}'
\]
 such that both diagrams involving $h,h',p,p'$ commute.
\end{defn}

\begin{exs} \label{ex:M}
The example motivating the notation comes from homotopy theory, see e.g.\cite{B1}. For $m < 3n-2$, let $M_e= \pi_m(S^n)$, $M_{ee}=\pi_m(S^{2n-1})$, $h$ be the Hopf invariant and $p$ be given by post-composing with the Whitehead product $[\iota_n,\iota_n]:S^{2n-1}\to S^n$.

This quadratic group satisfies $php=2p$ which is part of the definition used in \cite{B1}, where $M_e$ is also assumed to be commutative.  As we shall see, these additional assumptions have the disadvantage that they are not satisfied for the universal example~\ref{ex:universal}. 

Another important example comes from an abelian group with involution $(M,*)$. Then we let
\begin{equation} \tag{$M$}
M_{ee}:=M, \quad M_e:=M/\langle x - x^* \rangle , \quad h([x]):=x+x^*
\end{equation}
and $p$ be the natural quotient map.  For example, $M$ could be a ring with involution, like a group ring $\Z[\pi_1X]$ with involution
\[
g^*= \pm w_1(g)\cdot g^{-1}
\]
 coming from an orientation character $w_1:\pi_1X\to\z$. 
\end{exs}

We note that in this example $hp-\id=*$ and in the homotopy theoretic example $hp-\id=(-1)^n$. In fact, we have the following
\begin{lem}\label{lem:involutions}
Given a quadratic group, the endomorphism $hp-\id$ gives an involution on $M_{ee}$ (which we will denote by $*$). Moreover, the formula $\dagger(x) := ph(x) -x$ defines an anti-involution on $M_e$. These satisfy
\begin{enumerate}
\item $*\circ h = h$, 
\item $php = p + p\circ *$,
\item $p \circ * = \dagger\circ p$. 
\end{enumerate}
\end{lem}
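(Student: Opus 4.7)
My plan is to verify all the claims by direct computation from the three axioms of a quadratic group, namely that $M_{ee}$ is abelian, that the image of $p$ lies in the center of $M_e$, and that $hph = 2h$. The only subtle point is bookkeeping in the possibly non-commutative group $M_e$, and every instance of rearrangement there will be justified by centrality of the image of $p$.

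First I would show that $* = hp - \id$ is an involution on $M_{ee}$. Because $M_{ee}$ is abelian, $*$ is automatically a homomorphism, and a one-line calculation
\[
*\circ *(y) = hp\bigl(hp(y) - y\bigr) - (hp(y) - y) = (hph)p(y) - 2hp(y) + y
\]
collapses to $y$ using $hph = 2h$. The identity (1), namely $*\circ h = h$, is then immediate from $hph = 2h$. For (3), both $p\circ *(y)$ and $\dagger\circ p(y)$ expand to $php(y) - p(y)$, so they agree. For (2), I compute
\[
p(x) + p(*(x)) = p(x) + php(x) - p(x) = php(x),
\]
where the last equality uses that $php(x)$ lies in the image of $p$, hence in the center of $M_e$, so it commutes past $p(x)$ and the cancellation is legitimate.

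The step requiring the most care is showing that $\dagger(x) := ph(x) - x$ is an anti-involution on $M_e$. To see that $\dagger$ is an anti-homomorphism I will expand
\[
\dagger(x+y) = ph(x) + ph(y) - y - x, \qquad \dagger(y) + \dagger(x) = ph(y) - y + ph(x) - x,
\]
and use centrality of $ph(x)$ and $ph(y)$ to move them past $-x$ and $-y$; both expressions reduce to $ph(x) + ph(y) - y - x$. From the anti-homomorphism property, $\dagger(-x) = -\dagger(x) = x - ph(x)$, and a short computation using $phph = p\cdot hph = 2ph$ gives $\dagger(ph(x)) = ph(x)$. Assembling these yields
\[
\dagger^2(x) = \dagger(-x) + \dagger(ph(x)) = (x - ph(x)) + ph(x) = x,
\]
which completes the proof.

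The main (minor) obstacle is simply remaining disciplined about non-commutativity in $M_e$; once one treats every symbol of the form $ph(-)$ as central, every verification is mechanical, and I would organize the final write-up in the order above so that (1), (2), (3) are dispatched as easy corollaries before tackling the anti-involution claim for $\dagger$.
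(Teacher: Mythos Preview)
Your proposal is correct and follows exactly the approach the paper indicates: the paper leaves the proof to the reader, noting only that one should use centrality of $\im(p)$ and that $x\mapsto -x$ is an anti-homomorphism, which is precisely what you do. Your bookkeeping in $M_e$ is careful and each step is justified, so there is nothing to add.
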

The proof of this Lemma is straightforward and will be left to the reader. To show that $\dagger$ is an anti-homomorphism one uses that $\im(p)$ is central and that $x\mapsto -x$ is an anti-homomorphism. 

\begin{defn}\label{def:quadratic group refinement}
A quadratic group $\frak M$ is a {\em quadratic refinement} of an abelian group with involution $(M,*)$ if
\[
M_{ee}=M  \quad \text{ and } \quad *=hp-\id.
\]
It follows from (i) in Lemma~\ref{lem:involutions} that in this case, the image of $h$ lies in the fixed point set of the involution: $h: M_e \to M^{\z} = H^0(\z;M)$. 
\end{defn}
The example $(M)$ above gives one natural choice of a quadratic refinement, however, there are other canonical (and non-commutative) ones as we shall see in Example~\ref{ex:universal}.

It follows from (ii) in Lemma~\ref{lem:involutions} that the additional condition $php=2p$ used in \cite{B1} is satisfied if and only if $p=p\circ *$, or equivalently, if $p$ factors through the cofixed point set of the involution:
\[
p: M_{ee} \sra (M_{ee})_{\z} = H_0(\z;M_{ee}) \to M_e
\]
It follows that the notion in \cite[p.246]{R} is equivalent to that in \cite{B1}, except that $M_{ee}$ is assumed to be the ground ring $R$ in the former. In that case, our involution is simply $r^* = \epsilon \bar r$, where $\epsilon=\pm 1$ and $r\mapsto \bar r$ is the given involution on the ring $R$. 

Then $\epsilon$-symmetric forms in the sense of Ranicki become hermitian forms in the sense defined below. In particular, Ranicki's $(+1)$-symmetric forms are different from the notion of {\em symmetric} form in this paper: We reserve it for the easiest case where both involutions, $*$ and $\dagger$, are trivial.

\subsection{Non-commutative quadratic forms}
\begin{defn}\label{def:quadratic} A {\em (non-commutative) quadratic form} on an abelian
group $A$ with values in a (non-commutative) quadratic group 
$\frak M= (M_{e} \overset{h}{\to} M_{ee} \overset{p}{\to} M_{e})$ is given by a bilinear map $\lambda:A\times A \to M_{ee}$ and a map $\mu:A\to M_{e}$ satisfying
\begin{enumerate}
\item $\mu(a+a')= \mu(a)+\mu(a')+p \circ \lambda(a,a')$ \ and
\item $h  \circ \mu(a) = \lambda(a,a) \ \forall a,a' \in A$.
\end{enumerate}
We say that $\mu$ is a {\em quadratic refinement} of $\lambda$: Property (i) says that $\mu$ is quadratic and property (ii) means that it `refines' $\lambda$. The notation  $M_e$ and $M_{ee}$ was designed (by Baues) to reflect the number of variables ({\bf e}ntries) of the maps $\mu$ and $\lambda$ respectively. He also writes $\lambda=\lambda_{ee}$ and $\mu=\lambda_e$, however, we decided not to follow that part of the notation.

We write $(\lambda,\mu):A\to \frak M$ for such quadratic forms and we always assume that the quadratic group $\frak M$ is part of the data for $(\lambda,\mu)$. 
This means that the morphisms in the category of quadratic forms are pairs of morphisms 
\[
\alpha:A\to A'  \quad \text{ and } \quad \beta=(\beta_e,\beta_{ee}): \frak M \to \frak{M}'
\]
such that both diagrams involving $\lambda, \lambda', \mu, \mu'$ commute.
\end{defn}
\begin{lem}\label{lem:symmetries}
Let $(\lambda,\mu):A\to \frak M$ be a quadratic form as above. Then
$\lambda$ is {\em hermitian} with respect to the involution $*=hp-\id$ on $M_{ee}$:
\[
\lambda(a',a) = \lambda(a,a')^*
\]
and 
 $\mu$ is {\em hermitian} with respect to the anti-involution $\dagger =ph -\id$ on $M_e$:
\[
\mu(-a) =  \mu(a)^\dagger
\]
\end{lem}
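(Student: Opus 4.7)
The plan is to derive both hermitian identities by a direct computation from the two defining axioms (i) and (ii), using that $\operatorname{im}(p)$ is central in $M_e$ and that $M_{ee}$ is abelian. Nothing subtle is involved; the key obstacle — if there is one — is simply bookkeeping in the non-commutative group $M_e$.

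First I would establish the base case $\mu(0)=0$. Setting $a=a'=0$ in axiom (i) and using bilinearity of $\lambda$ (which gives $\lambda(0,0)=0$, hence $p\lambda(0,0)=0$), one obtains $\mu(0)=2\mu(0)$, so $\mu(0)=0$.

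Next I would prove the hermitian identity for $\lambda$. Apply the homomorphism $h\colon M_e\to M_{ee}$ to both sides of axiom (i). Since $M_{ee}$ is abelian, this yields
\[
h\mu(a+a') = h\mu(a)+h\mu(a')+hp\lambda(a,a').
\]
On the other hand, (ii) combined with bilinearity of $\lambda$ gives
\[
h\mu(a+a')=\lambda(a+a',a+a')=\lambda(a,a)+\lambda(a,a')+\lambda(a',a)+\lambda(a',a').
\]
Using (ii) again to rewrite $h\mu(a)=\lambda(a,a)$ and $h\mu(a')=\lambda(a',a')$, and cancelling in the abelian group $M_{ee}$, one is left with
\[
\lambda(a,a')+\lambda(a',a)=hp\lambda(a,a'),
\]
hence $\lambda(a',a)=(hp-\operatorname{id})\lambda(a,a')=\lambda(a,a')^{*}$, which is the first claim.

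Finally, for the hermitian identity for $\mu$, I would set $a'=-a$ in axiom (i). Using bilinearity of $\lambda$ we have $\lambda(a,-a)=-\lambda(a,a)$, and the computation of $\mu(0)$ above shows that the left-hand side vanishes, giving
\[
0=\mu(a)+\mu(-a)-p\lambda(a,a).
\]
Now apply (ii) to rewrite $p\lambda(a,a)=ph\,\mu(a)$. Since $ph\,\mu(a)\in\operatorname{im}(p)$ lies in the center of $M_e$, it may be transposed freely, yielding
\[
\mu(-a) = ph\,\mu(a)-\mu(a) = (ph-\operatorname{id})\mu(a) = \mu(a)^{\dagger},
\]
as required. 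The only non-formal ingredient in the whole argument is the centrality of $\operatorname{im}(p)$, which is precisely what allows the final rearrangement in $M_e$ despite the possible non-commutativity.
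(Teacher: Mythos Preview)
Your proof is correct and follows essentially the same route as the paper's: expand $h\mu(a+a')$ two ways via (i) and (ii) to obtain the hermitian law for $\lambda$, then set $a'=-a$ in (i) and use (ii) together with centrality of $\operatorname{im}(p)$ to get $\mu(-a)=\mu(a)^\dagger$. You are in fact slightly more careful than the paper, since you explicitly verify $\mu(0)=0$ and flag the use of centrality in the final rearrangement.
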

\begin{proof}
As a consequence of conditions (i) and (ii) we get
\begin{align*}
& \lambda(a,a)+ \lambda(a',a') + \lambda(a',a) + \lambda(a,a') = \lambda(a+a', a+a') =\\
& h\circ\mu(a+a') = h(\mu(a)+ \mu(a') +p\circ \lambda(a,a'))=\\
& \lambda(a,a)+ \lambda(a',a') + hp(\lambda(a,a'))
\end{align*}
or equivalently, $\lambda(a',a) = (hp-\id)\lambda(a,a') = \lambda(a,a')^*$. 
Similarly,
\begin{align*}
& 0=\mu(0) = \mu(a -a) = \mu(a) + \mu(-a) + p\circ \lambda(a,-a) =\\
& \mu(a) + \mu(-a) - p\circ h\circ \mu(a) =\mu(-a) + (\id - ph)\mu(a)
\end{align*}
or equivalently, $\mu(-a) = \dagger\circ \mu(a) =:\mu(a)^\dagger$.
\end{proof}
Starting with a hermitian form $\lambda$ with values in a group with involution $(M,*)$, the first step in finding a quadratic refinement for $\lambda$ is to find a quadratic refinement $\frak M$ of $(M,*)$ in the sense of Definition~\ref{def:quadratic group refinement}, motivating our terminology.

\subsection{Universal quadratic refinements}

\begin{ex}\label{ex:universal}
Given a hermitian form $\lambda:A \times A \to (M,*)$, one gets a quadratic refinement $\mu_\lambda$ of $\lambda$ as follows. Set $M_{ee}:=M$ and define the universal target $M_{e}:=M_{ee} \times_\lambda A$ to be the abelian group consisting of pairs $(m,a)$ with $m\in M_{ee}$ and $a\in A$ and multiplication given by
\[
(m,a) + (m',a') := (m + m' - \lambda(a,a'), a + a')
\]
In other words, $M_{e}$ is the central extension
\[
\xymatrix{
1 \ar[r]  & M_{ee} \ar[r]& M_{ee} \times_\lambda A  \ar[r]&  A \ar[r] & 1
}
\]
determined by the cocycle $\lambda$, compare Section~\ref{sec:presentations}. 
It follows that $M_e$ is commutative if and only if $\lambda$ is {\em symmetric} in the sense that $\lambda(a',a) = \lambda(a,a')$. Set
\[
 \quad p_\lambda(m):=(m,0),\quad  h_\lambda(m,a) :=  m +m^* + \lambda(a,a)
 \]
We claim that $\frak M_\lambda := (M_{ee}\overset{p_\lambda}{\to} M_e\overset{h_\lambda}{\to}M_{ee})$ is a quadratic group as in Definition~\ref{def:quadratic group}. It is clear that $p_\lambda$ is a homomorphism with image in the center of $M_e$. The homomorphism property of $h_\lambda$ follows from the fact that $\lambda$ is bilinear and hermitian:
\begin{align*}
& h_\lambda((m,a)+(m',a'))=
 h_\lambda(m+m'-\lambda(a,a'), a+ a') =\\
& (m+m'-\lambda(a,a')) + (m+m'-\lambda(a,a'))^* + \lambda(a+a',a+a')=\\
&(m+m^* + \lambda(a,a)) + (m'+ m'^* +\lambda(a',a'))=h_\lambda(m,a)+ h_\lambda(m',a')
\end{align*}
Condition (iii) of a quadratic group is also checked easily:
\begin{align*}
& h_\lambda p_\lambda h_\lambda(m,a) = h_\lambda(m+m^* + \lambda(a,a),0) =\\
& (m+m^*+\lambda(a,a)) + (m+m^*+\lambda(a,a))^* = \\
& 2(m+m^*+\lambda(a,a))= 2h_\lambda(m,a)
\end{align*}
We also see that 
\[
(h_\lambda p_\lambda -\id)(m) = h_\lambda(m,0) - m = (m+m^*) - m = m^*
\]
which means that $\frak M_\lambda$ ``refines'' the group with involution $(M,*)$. 
Finally,  setting $\mu_\lambda(a):= (0,a)$,  we claim that $(\lambda,\mu_\lambda):A\to \frak M_\lambda$ is a quadratic refinement of $\lambda$. We need to check properties (i) and (ii) of a quadratic form:
(i) is simply $h_\lambda \circ \mu_\lambda(a) = h_\lambda(0,a) = \lambda(a,a)$
and (ii) explains why we used a sign in front of $\lambda$ in our central extension:
\begin{align*}
& \mu_\lambda(a)+\mu_\lambda(a') + p_\lambda \circ\lambda(a,a') =
(0,a) + (0,a') + (\lambda(a,a'),0) =\\
& (-\lambda(a,a'),a+a') + (\lambda(a,a'), 0) = (0,a+a') = \mu_\lambda(a+a')
\end{align*}
\end{ex}

The following result will show that $\mu_\lambda$ is indeed a {\em universal} quadratic refinement of $\lambda$. This is the content of the first statement in the theorem below. It follows from the second statement because for any quadratic refinement $\mu$ of $\lambda$ it shows that forgetting the quadratic data gives canonical isomorphisms
\[
\QF(L\circ R(\lambda,\mu), (\lambda,\mu)) \cong \HF( R(\lambda,\mu) , R(\lambda,\mu)) = \HF(\lambda,\lambda)
\]
where $\QF$ respectively $\HF$ are (the morphisms in) the categories of quadratic respectively hermitian forms. Since 
\[
L\circ R(\lambda,\mu) = L(\lambda)= (\lambda,\mu_\lambda)
\]
 and the morphisms in the category $ \QR_\lambda$ of quadratic refinements of $\lambda$ 
 by definition all lie over the identity of $\lambda$, the set $ \QR_\lambda(\mu_\lambda,\mu)$
contains a unique element, namely the required universal morphism $\mu_\lambda \to \mu$.

\begin{thm}\label{thm:universal}
The quadratic form $(\lambda,\mu_\lambda)$ is initial in the category of quadratic refinements of $\lambda$. In fact, the forgetful functor $R(\lambda,\mu)=\lambda$ from the category of quadratic forms to the category of hermitian forms has a left adjoint $L:\HF\to \QF$ given by $L(\lambda):= (\lambda,\mu_\lambda)$.
\end{thm}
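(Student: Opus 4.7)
The plan is to prove the adjunction statement directly; initiality of $(\lambda,\mu_\lambda)$ in the category $\QR_\lambda$ then drops out by taking $\lambda'=\lambda$ and the identity morphism, exactly as the paragraph preceding the theorem spells out. So the task reduces to producing, naturally in both variables, a bijection
\[
\Phi\colon \QF(L(\lambda),(\lambda',\mu)) \xrightarrow{\;\cong\;} \HF(\lambda,\lambda')
\]
whose forward direction is the forgetful functor $R$. The real work is then to construct its inverse $\Psi$ and check the triangle identities.

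Given $(\alpha,\beta)\in\HF(\lambda,\lambda')$, I would set $\beta_{ee}:=\beta$ and define $\beta_e\colon M_{ee}\times_\lambda A\to M'_e$ by
\[
\beta_e(m,a)\,:=\,p'(\beta(m))+\mu(\alpha(a)).
\]
This formula is in fact \emph{forced}: any morphism of quadratic forms out of $L(\lambda)$ must satisfy $\beta_e\circ p_\lambda=p'\circ\beta_{ee}$ and $\beta_e\circ\mu_\lambda=\mu\circ\alpha$, which pin down $\beta_e$ on $(m,0)$ and $(0,a)$, and since $(m,a)=(m,0)+(0,a)$ in $M_{ee}\times_\lambda A$ the homomorphism property determines the rest. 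This observation simultaneously yields uniqueness of $\Psi(\alpha,\beta)$ and the triangle identity $\Psi\circ R=\id$.

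The substantive step is verifying that $\Psi(\alpha,\beta)$ really is a morphism in $\QF$. I would check in turn that (a) $\beta_e$ is a homomorphism; (b) $\beta_{ee}\circ h_\lambda=h'\circ\beta_e$; and (c) the $\mu$- and $\lambda$-compatibility squares commute. For (a), expanding $\beta_e\bigl((m,a)+(m',a')\bigr)$ via the cocycle addition in $M_{ee}\times_\lambda A$ produces a term $-p'\lambda'(\alpha a,\alpha a')$ (after applying $\beta\circ\lambda=\lambda'\circ(\alpha\times\alpha)$) and a compensating $+p'\lambda'(\alpha a,\alpha a')$ (from the quadratic law for $\mu$); these cancel \emph{after} one exploits that $\im(p')$ is central in $M'_e$ to commute $p'$-terms past $\mu(\alpha a)$ and $\mu(\alpha a')$. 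For (b), one computes $h'\beta_e(m,a)=\beta(m)+\beta(m)^{*'}+\lambda'(\alpha a,\alpha a)$ using $h'p'=\id+*'$ from Lemma~\ref{lem:involutions} together with the defining identity $h'\mu(\alpha a)=\lambda'(\alpha a,\alpha a)$, which matches $\beta h_\lambda(m,a)=\beta(m+m^*+\lambda(a,a))$ because $\beta$ intertwines the involutions and the hermitian forms. Part (c) is immediate from the definitions.

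The main obstacle I expect is purely bookkeeping in the non-commutative setting: because $M_e$ and $M'_e$ need not be abelian, every step of (a) has to respect the order of addition, and centrality of $\im(p')$ carries all the weight. Once $\Psi$ has been constructed and checked to land in $\QF$, naturality in both arguments is a routine diagram chase using functoriality of the central extension $M_{ee}\times_\lambda A$ in the hermitian form $\lambda$, completing the proof that $L\dashv R$.
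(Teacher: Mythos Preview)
Your proposal is correct and follows essentially the same route as the paper: define $\beta_e(m,a)=p'(\beta_{ee}(m))+\mu'(\alpha(a))$, observe this is forced by compatibility with $p_\lambda$ and $\mu_\lambda$ (giving uniqueness), then verify it is a homomorphism via the cancellation of the $p'\circ\lambda'$-term against the quadratic law for $\mu'$, and finally check $h'\circ\beta_e=\beta_{ee}\circ h_\lambda$ using $h'p'=\id+*'$ and $h'\mu'=\lambda'(-,-)$. The only cosmetic difference is that you highlight naturality and the non-commutative bookkeeping a bit more explicitly than the paper does.
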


\begin{proof}
We have to construct natural isomorphisms
\[
\QF((\lambda,\mu_{\lambda}), (\lambda',\mu')) = \QF(L(\lambda), (\lambda',\mu')) \cong \HF( \lambda , R(\lambda',\mu')) = \HF(\lambda, \lambda')
\]
for any quadratic form $(\lambda',\mu')$ and hermitian form $\lambda$. Recall that the morphisms in $\QF$ are pairs $\alpha: A\to A'$ and $\beta=(\beta_e,\beta_{ee}): \frak M \to \frak M'$ such that the relevant diagrams commute. This implies that forgetting about the quadratic datum $\beta_e$ gives a natural map from the left to the right hand side above. 

Given a morphism $(\alpha,\beta_{ee}): \lambda\to \lambda'$ consisting of homomorphism $\alpha:A\to A'$ and $\beta_{ee}: (M_{ee},*) \to (M_{ee}',*')$ such that
\[
\lambda'(\alpha(a_1), \alpha(a_2)) = \beta_{ee} \circ \lambda(a_1, a_2) \in M_{ee}' \quad \forall\ a_i\in A
\]
we need to show that there is a {\em unique} homomorphism $\beta_e: M_e \to M_e'$ such that the following 3 diagrams commute
\[
\xymatrix{
\ar @{} [dr] |{(1)} M_{e} \ar[r]^{h} \ar[d]_{\beta_e} & M_{ee} \ar[d]^{\beta_{ee}} 
& \ar @{} [dr] |{(2)} M_{ee} \ar[r]^{p} \ar[d]_{\beta_{ee}} & M_{e} \ar[d]^{\beta_{e}} 
& \ar @{} [dr] |{(3)} A \ar[r]^{\mu_{\lambda}} \ar[d]_{\alpha} & M_{e} \ar[d]^{\beta_{e}} \\
 M_{e}' \ar[r]^{h'} & M_{ee}'
 &  M_{ee}' \ar[r]^{p'} & M_{e}'  
 &  A' \ar[r]^{\mu'} & M_{e}'  
}
\]
We will now make use of the fact that $M_e=M_{ee} \times_\lambda A$ because $\mu_\lambda$ is given as in Example~\ref{ex:universal}. In this case, diagrams (2) and (3) are equivalent to
\[
\beta_e(m,0) = p'\circ\beta_{ee}(m)  \quad \text{ and } \quad \beta_e(0,a) = \mu'\circ \alpha(a)
\]
because $p(m) = (m,0)$ and $\mu_\lambda(a) = (0,a)$. 
This implies directly the uniqueness of $\beta_e$. For existence, we only have to check that the formula
\[
\beta_e(m,a) := p'\circ\beta_{ee}(m) + \mu'\circ \alpha(a)
\]
gives indeed a group homomorphism $M_e \to M_e'$ that makes diagram (1) commute. Note that the image of $p'$ is central in $M_e'$ and hence the order of the summands does not matter. We have
\begin{align*}
& \beta_e((m,a)+(m',a'))=
 \beta_e(m+m'-\lambda(a,a'), a+ a') &=\\
& p'\circ\beta_{ee}(m+m'-\lambda(a,a')) + \mu'\circ \alpha(a+a')&=\\
&p'\circ\beta_{ee}(m) + p'\circ\beta_{ee}(m') - p'\circ \lambda'(\alpha(a),\alpha(a')) + \mu'\circ \alpha(a+a')&=\\
&p'\circ\beta_{ee}(m)  + p'\circ\beta_{ee}(m') + \mu'\circ \alpha(a) + \mu'\circ \alpha(a')&=\\
&\beta_e(m,a)+ \beta_e(m',a')
\end{align*}
To get to the forth line,  we used property (ii) of a quadratic form to cancel the term $p'\circ \lambda'(\alpha(a),\alpha(a'))$. For the commutativity of diagram (1) we use property (i) of a quadratic form, as well as the fact that $\beta_{ee}$ preserves the involution $*$:
\begin{align*}
&h'\circ \beta_e(m,a) = h' (p'\circ \beta_{ee}(m) + \mu'\circ\alpha(a))=\\
&h'p'(\beta_{ee}(m)) + \lambda'(\alpha(a),\alpha(a)) = \beta_{ee}(m)^{*'} + \beta_{ee}(m) + \beta_{ee} \circ\lambda(a,a) =\\
 & \beta_{ee}(m^*+ m +\lambda(a,a)) = \beta_{ee}\circ h(m,a) 
\end{align*}
This finishes the proof of left adjointness of $L:\HF\to \QF$.
\end{proof}
If the bilinear form $\lambda$ happens to be  {\em symmetric}, or more precisely, if it takes values in a group $M_{ee}$ with {\em trivial} involution $*$, then the above construction still gives a quadratic refinement $\mu_\lambda$. Its target quadratic group $\frak M_\lambda$ has the properties that $M_e$ is abelian and $h_\lambda p_\lambda = 2\id$. 
It is not hard to see that our construction above leads to the following result.

\begin{thm}\label{thm:universal}
For any symmetric form $\lambda$ one can functorially construct a quadratic form $(\lambda,\mu_\lambda)$ that is initial in the category of quadratic refinements of $\lambda$ with trivial involution $*$. In fact, the forgetful functor $R(\lambda,\mu)=\lambda$ from the category of quadratic forms with trivial involution $*$ to the category of symmetric forms has a left adjoint $L(\lambda)= (\lambda,\mu_\lambda)$.
\end{thm}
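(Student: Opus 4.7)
The plan is to derive the symmetric version as a direct restriction of the construction in Example~\ref{ex:universal} and the adjointness proved in the preceding (hermitian) theorem. A symmetric bilinear form $\lambda:A\times A\to M_{ee}$ is precisely a hermitian form with respect to the trivial involution $*=\id$ on $M_{ee}$, so the recipe of Example~\ref{ex:universal} applies verbatim and produces the quadratic group $\frak M_\lambda=(M_{ee}\xrightarrow{p_\lambda} M_e \xrightarrow{h_\lambda} M_{ee})$ with $M_e := M_{ee}\times_\lambda A$, $p_\lambda(m)=(m,0)$, $h_\lambda(m,a)=2m+\lambda(a,a)$, and $\mu_\lambda(a):=(0,a)$.

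Next I would verify the two features that distinguish the symmetric setting. Since $\lambda(a,a')=\lambda(a',a)$, the twisted addition $(m,a)+(m',a')=(m+m'-\lambda(a,a'),a+a')$ is commutative, so $M_e$ is abelian. Directly from the formulas, $h_\lambda p_\lambda(m)=h_\lambda(m,0)=2m$, so $h_\lambda p_\lambda=2\id$. In particular the involution $h_\lambda p_\lambda-\id$ induced on $M_{ee}$ is trivial, confirming that $\frak M_\lambda$ lies in the subcategory of quadratic groups with trivial $*$ and that $(\lambda,\mu_\lambda)$ is a quadratic refinement of $\lambda$ in this restricted sense.

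To deduce the stated adjunction, I would restrict the adjunction $L\dashv R$ established in the hermitian case to the full subcategory of quadratic forms whose ambient quadratic group carries the trivial involution. Given a morphism $(\alpha,\beta_{ee})\colon\lambda\to\lambda'$ of symmetric forms, the formula $\beta_e(m,a):=p'\circ\beta_{ee}(m)+\mu'\circ\alpha(a)$ from the proof of the hermitian theorem still yields the unique homomorphism $M_e\to M_e'$ making the three defining diagrams commute. All identities in the earlier verification go through unchanged; the only simplification is that $\beta_{ee}(m)^{*'}=\beta_{ee}(m)$, so the commutativity of the $h$-square reduces to $2\beta_{ee}(m)+\lambda'(\alpha(a),\alpha(a))=\beta_{ee}(2m+\lambda(a,a))$, which is immediate from linearity of $\beta_{ee}$ and the assumption that it intertwines $\lambda$ and $\lambda'$. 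Forgetting the quadratic datum therefore gives a natural bijection
\[
\QF^{\mathrm{triv}}(L(\lambda),(\lambda',\mu'))\;\cong\;\SF(\lambda,\lambda'),
\]
which is the desired left adjointness.

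The one point that genuinely needs to be checked, and which I expect to be the only real obstacle, is verifying that the target quadratic group of $L(\lambda)$ really lies in the restricted subcategory—this is exactly the identity $h_\lambda p_\lambda=2\id$ noted above—so that the restriction of the hermitian universal property is non-vacuous and identifies $(\lambda,\mu_\lambda)$ as the initial object of the category of quadratic refinements of $\lambda$ with trivial~$*$, rather than factoring through some further quotient.
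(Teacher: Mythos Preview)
Your proposal is correct and matches the paper's approach exactly. The paper does not give a separate proof for this theorem; it simply observes that when $\lambda$ is symmetric the construction of Example~\ref{ex:universal} yields a quadratic group $\frak M_\lambda$ with $M_e$ abelian and $h_\lambda p_\lambda=2\id$, and then states that ``our construction above leads to the following result.'' Your write-up supplies precisely those verifications and then restricts the hermitian adjunction, which is the intended argument.
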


\begin{rem}
It follows from the above considerations that a quadratic form $(\lambda,\mu)$ is universal if and only if the homomorphism
\[
M_{ee} \times_\lambda A \to M_e  \quad \text{ given by } \quad (m,a)\mapsto p(m) + \mu(a)
\]
is an isomorphism. This is turn is equivalent to
\begin{enumerate}
\item $p:M_{ee} \to M_e$ is injective and
\item $\mu : A \to M_e/\im(p)$ is an isomorphism.
\end{enumerate}
\end{rem}

\subsection{Commutative quadratic groups and forms}
The case where $*$ is non-trivial but the anti-involution $\dagger$ on $M_e$ is trivial is even more interesting. In this case, $\lambda$ is still hermitian with respect to $*$ but one is only interested in quadratic refinements $\mu$ that are symmetric in the sense that $\mu(-a)=\mu(a)$. This case deserves its own definition:

\begin{defn}\label{def:commutative quadratic group} A {\em commutative quadratic group}
\[
\frak M = (M_{e} \overset{h}{\to} M_{ee} \overset{p}{\to} M_{e})
\]
consists of two abelian groups $M_e, M_{ee}$ and two homomorphism $h,p$ satisfying $ph=2\id$. 
\end{defn}

In fact, a commutative quadratic group is the same thing as a non-commutative quadratic group with trivial anti-involution $\dagger$. This comes from the fact that the squaring map $x\mapsto 2x$ is a homomorphism if and only if $M_e$ is commutative. Our universal example $\frak M_\lambda$ is in general {\em not} commutative because one gets in this case
\begin{align*}
& \dagger_\lambda(m,a) = p_\lambda \circ h_\lambda(m,a) - (m,a) = p_\lambda(m+m^* +\lambda(a,a)) - (m,a) =\\
& (m+m^* +\lambda(a,a),0) + (-m-\lambda(a,a),-a) = (m^*, -a)
\end{align*}
However, we shall see in Theorem~\ref{thm:commutative universal} that we can just divide by these relations $(m,a) = (m^*,-a)$ to obtain another universal quadratic refinement of a given hermitian form $\lambda$ but this time with values in a {\em commutative} quadratic group. Before we work this out, let us mention the essential example from topology.
\begin{ex}
Consider a manifold $X$ of dimension~$2n$ and let $\frak M$ be as in (M) from Example~\ref{ex:M} with $M=\Z[\pi_1X]$.  In particular, we have $ph-\id=\dagger=\id$ but in general the involution $*$ is non-trivial. On group elements, it is given by
\[
g^* := (-1)^n w_1(g) g^{-1}
\]
Then the equivariant intersection form $\lambda=\lambda_X$ on $A=\pi_nX$ is bilinear and hermitian as required. Moreover, the self-intersection invariant $\mu_X$ defined by Wall \cite{Wa} gives a quadratic refinement of $\lambda_X$, at least on the subgroup of $A$ represented by immersed $n$-spheres with stably trivial normal bundle. 
\end{ex}

In our main Theorem below, we shall use the following
\begin{lem}\label{lem:square}
If $(\lambda,\mu):A\to \frak M$ is a commutative quadratic form, then $\mu(n\cdot a)=n^2\cdot \mu(a)$ for all integers $n\in \Z$.
\end{lem}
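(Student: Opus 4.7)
The plan is to prove the identity by induction on $|n|$, using the two defining properties (i) and (ii) of a quadratic form together with the special features of the commutative case, namely that $ph=2\id$ and that the anti-involution $\dagger=ph-\id$ is trivial (so $\mu(-a)=\mu(a)$ by Lemma~\ref{lem:symmetries}).

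First I would dispatch the base cases. Setting $a=a'=0$ in property (i) shows $\mu(0)=0$, which handles $n=0$. The case $n=1$ is tautological. The inductive ingredient is the computation
\[
\mu((n+1)a)=\mu(na)+\mu(a)+p\circ\lambda(na,a)=\mu(na)+\mu(a)+n\cdot p\circ\lambda(a,a),
\]
where the second equality uses bilinearity of $\lambda$. Property (ii) rewrites $p\circ\lambda(a,a)=ph\circ\mu(a)$, and in the commutative setting $ph=2\id$, so $p\circ\lambda(a,a)=2\mu(a)$. Hence
\[
\mu((n+1)a)=\mu(na)+(1+2n)\cdot\mu(a),
\]
and the inductive hypothesis $\mu(na)=n^2\mu(a)$ yields $\mu((n+1)a)=(n^2+2n+1)\mu(a)=(n+1)^2\mu(a)$, completing the induction for $n\geq 0$.

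To handle negative integers, I would invoke the triviality of $\dagger$ in the commutative case, which by Lemma~\ref{lem:symmetries} gives $\mu(-a)=\mu(a)$. Therefore $\mu(n\cdot a)=\mu(-n\cdot a)=(-n)^2\mu(a)=n^2\mu(a)$ for $n<0$ as well. There is no real obstacle here; the only thing to be a bit careful about is that the identity $p\circ\lambda(a,a)=2\mu(a)$ relies on the commutative hypothesis $ph=2\id$ (in the general non-commutative setting one would only get $ph\mu(a)=\mu(a)+\mu(a)^\dagger$), which is exactly why the statement is formulated for commutative quadratic forms.
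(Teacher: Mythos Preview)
Your proof is correct and follows essentially the same approach as the paper: induction on $n\geq 0$ using property (i), bilinearity, property (ii), and $ph=2\id$, then handling negative $n$ via $\mu(-a)=\mu(a)$ from Lemma~\ref{lem:symmetries}. The only difference is cosmetic---you spell out the base case $\mu(0)=0$ explicitly, while the paper starts directly with the inductive step for $n>1$.
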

Here we say that a quadratic form $(\lambda,\mu):A\to \frak M$ is {\em commutative} if the target quadratic group $\frak M$ is commutative, i.e.\ if the anti-involution $\dagger$ is trivial. 
\begin{proof}
Since the involution $\dagger=ph-\id$ is trivial by assumption, we already know that $\mu(-a)=\mu(a)$ from Lemma~\ref{lem:symmetries}. Thus it suffices to prove the claim for positive $n>1$ by induction:
\begin{align*}
\mu((n+1)\cdot a) &= \mu(n\cdot a) + \mu(a) + p\circ\lambda(n\cdot a,a) \\
&= n^2\cdot \mu(a) + \mu(a) + n\cdot p\circ h\circ \mu(a) \\
&= (n^2+1)\cdot \mu(a) + n \cdot 2\cdot \mu(a) = (n+1)^2\cdot \mu(a)
\end{align*}
Here we again used the fact that $p\circ h=2\id$.
\end{proof}

\begin{thm}\label{thm:commutative universal}
Any hermitian bilinear form $\lambda$ has a universal commutative quadratic refinement. In fact, the forgetful functor $R(\lambda,\mu)=\lambda$ from the category $\CQF$ of commutative quadratic forms to the category $\HF$ of hermitian forms has a left adjoint $L:\HF\to\CQF, L(\lambda)=(\lambda,\mu^c_\lambda)$.
\end{thm}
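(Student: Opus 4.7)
The approach is to mimic Example~\ref{ex:universal} and the proof of the preceding theorem, but to quotient the central extension $M_{ee}\times_\lambda A$ by additional relations that force the target quadratic group to be commutative and the anti-involution $\dagger$ to be trivial (equivalently, $ph = 2\id$).

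\emph{Construction.} Let $N \subseteq M_{ee}\times_\lambda A$ be the subgroup generated by the two families
\[
(m - m^*,\, 0),\ m\in M_{ee} \qquad \text{and} \qquad (-2\lambda(a,a),\, 2a),\ a\in A.
\]
Set $M^c_e := (M_{ee}\times_\lambda A)/N$, let $p^c$ be induced by $p_\lambda$, and define $\mu^c_\lambda(a) := [(0,a)]$. The map $h_\lambda$ vanishes on both families: on the first since $**=\id$, and on the second since hermitianity of $\lambda$ gives $\lambda(a,a) = \lambda(a,a)^*$, whence $h_\lambda(-2\lambda(a,a),2a) = -2\lambda(a,a) - 2\lambda(a,a)^* + \lambda(2a,2a) = 0$. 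Hence $h_\lambda$ descends to $h^c\colon M^c_e\to M_{ee}$.

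\emph{Axioms.} Since the image of $p_\lambda$ is central, $N$ is normal. The commutator $[(m,a),(m',a')]$ in $M_{ee}\times_\lambda A$ equals $p_\lambda(\lambda(a,a')^* - \lambda(a,a'))$, which lies in $N$, so $M^c_e$ is abelian. A direct calculation shows $2(m,a) - p_\lambda h_\lambda(m,a) = (m-m^*, 0) + (-2\lambda(a,a), 2a) \in N$, proving $p^c h^c = 2\id$, so $\frak M^c_\lambda$ is a commutative quadratic group. Conditions (i) and (ii) of Definition~\ref{def:quadratic} for $(\lambda,\mu^c_\lambda)$ descend from Example~\ref{ex:universal}, while the symmetry $\mu^c_\lambda(-a) = \mu^c_\lambda(a)$ is precisely the second relation, since a short computation gives $(0,a) - (0,-a) = (-2\lambda(a,a), 2a)$.

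\emph{Universality.} Given a commutative quadratic form $(\lambda',\mu'):A'\to\frak M'$ and a morphism $(\alpha,\beta_{ee}):\lambda\to\lambda'$ of hermitian forms, the preceding theorem supplies a unique morphism $\tilde\beta_e: M_{ee}\times_\lambda A\to M'_e$ of non-commutative quadratic groups. The plan is to show $\tilde\beta_e$ annihilates $N$, after which descent yields the required unique $\beta_e: M^c_e\to M'_e$ and establishes the adjunction $L\dashv R$. On the first family, this uses the identity $p'\circ *' = p'$, which in any commutative quadratic group follows from $*' = h'p'-\id$ together with $p'h' = 2\id$. On the second family, the explicit formula for $\tilde\beta_e$ gives $\tilde\beta_e(-2\lambda(a,a),2a) = -2p'(\lambda'(\alpha(a),\alpha(a))) + \mu'(2\alpha(a))$; the identity $p'h'\mu' = 2\mu'$ rewrites the first term as $-4\mu'(\alpha(a))$, while Lemma~\ref{lem:square} gives $\mu'(2\alpha(a)) = 4\mu'(\alpha(a))$, so the two contributions cancel.

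\emph{Main obstacle.} The delicate point is choosing the relations $N$ correctly: too few and $\dagger$ remains non-trivial, too many and either $h^c$ fails to descend or $(\lambda,\mu^c_\lambda)$ ceases to be a refinement of $\lambda$. The twin verifications that hermitianity of $\lambda$ kills the second family under $h_\lambda$, and that Lemma~\ref{lem:square} kills the same family under $\tilde\beta_e$, are dual to one another and form the heart of the argument.
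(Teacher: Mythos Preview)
Your proof is correct and follows essentially the same approach as the paper: both construct $M^c_e$ as the quotient of $M_{ee}\times_\lambda A$ by the same two families of relations (yours differ from the paper's $(-2\lambda(a,a),-2a)$ only by reparametrizing $a\mapsto -a$), define $p^c,h^c,\mu^c_\lambda$ identically, and verify universality by checking that the $\beta_e$ from the non-commutative case annihilates the new relations using $p'\circ *'=p'$ and Lemma~\ref{lem:square}. One small expositional point: your sentence ``Since the image of $p_\lambda$ is central, $N$ is normal'' does not by itself establish normality of the second family, but your subsequent commutator computation does (since it shows $[G,G]\subseteq N$).
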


\begin{proof}
As hinted to above, we will force the anti-involution $t$ to be trivial in the universal construction of Theorem~\ref{thm:universal}. This means that we should define the universal (commutative) group $M^c_e$ as the quotient of our previously used group $M_{ee} \times_\lambda A$ by the relations
\begin{align*}
0 &= (m^*, -a) - (m,a) = (m^*,-a) + (-m-\lambda(a,a)),-a) \\
& =(m^* - m - 2\lambda(a,a), -2a)
\end{align*}
By setting $a$ respectively $m$ to zero, these relations imply
\[
(m^*,0) = (m,0)  \quad \text{ and } \quad (-2\lambda(a,a), -2a) = 0
\]
Vice versa, these two types of equations imply the general ones and hence we see that $M^c_e$ is the quotient of the centrally extended group
\[
\xymatrix{
1 \ar[r]  & M_{ee}/(m^*=m) \ar[r]& M_{ee}/(m^*=m) \times_\lambda A  \ar[r]&  A \ar[r] & 1
}
\]
by the relations $(-2\lambda(a,a), -2a) = 0$. We write elements in $M^c_e$ as $[m,a]$ with the above relations understood. It then follows that $p^c_\lambda(m):= [m,0]$ is a homomorphism $M_{ee}\to M^c_e$ (which is in general not any more injective). Moreover, our original formula leads to a homomorphism $h^c_\lambda:M^c_e\to M_{ee}$ given by
\[
h^c_\lambda[m,a]:= h_\lambda(m,a)= m+m^* + \lambda(a,a)
\]
To see that this is well defined, observe $h_\lambda(m^*,0) = m+m^* = h_\lambda(m,0)$ and
\[
h_\lambda(-2\lambda(a,a),-2a)= -4\lambda(a,a) + \lambda(-2a,-2a) =0 
\]
Finally, we set $\mu^c_\lambda(a):=[0,a]$ to obtain a commutative quadratic refinement of $\lambda$ which is proven exactly as in Theorem~\ref{thm:universal}. 

To show that $\mu^c_\lambda$ is universal, or more generally, that $L(\lambda):=(\mu^c_\lambda,\lambda)$ is a left adjoint of the forgetful functor $R$, we proceed as in the proof of Theorem~\ref{thm:universal}. 
So we are given a morphism $(\alpha,\beta_{ee}): \lambda\to \lambda'$ consisting of homomorphism $\alpha:A\to A'$ and $\beta_{ee}: (M_{ee},*) \to (M_{ee}',*')$ such that
\[
\lambda'(\alpha(a_1), \alpha(a_2)) = \beta_{ee} \circ \lambda(a_1, a_2) \in M_{ee}' \quad \forall\ a_i\in A
\]
we need to show that there is a {\em unique} homomorphism $\beta_e: M^c_e \to M_e'$ such that the 3 diagrams from the proof of Theorem~\ref{thm:universal} commute.
We can use the same formulas as before, if we check that they vanish on our new relations in $M^c_e$. For this we'll have to use that the given quadratic group $\frak M'$ is {\em commutative}. Recall the formula
\[
\beta_e(m,a) = p'\circ\beta_{ee}(m) + \mu'\circ \alpha(a)
\]
Splitting our relations into two parts as above, it suffices to show that
\[
p'\circ\beta_{ee}(m^*) = p'\circ\beta_{ee}(m)  \quad \text{ and } \quad \beta_e(-2\lambda(a,a),-2a)=0
\]
The first equation follows from part (iii) of Lemma~\ref{lem:involutions} and the fact that we are assuming that $\dagger'=\id$:
\begin{align*}
 p'\circ\beta_{ee}(m^*) = (p'\circ *')(\beta_{ee}(m)) =  (\dagger' \circ p')(\beta_{ee}(m)) = p'\circ\beta_{ee}(m)
\end{align*}
For the second equation we compute:
\begin{align*}
& \beta_e(-2\lambda(a,a),-2a)=p'\circ\beta_{ee}(-2\lambda(a,a)) + \mu'\circ \alpha(-2a)=\\
& -2(p' \circ\lambda'(\alpha(a),\alpha(a))) + \mu'\circ \alpha(-2a)=\\
 &-2(\mu'(\alpha(a) + \alpha(a)) - \mu'(\alpha(a)) - \mu'(\alpha(a))) + \mu'(-2\alpha(a))=\\
 &-2(4\mu'(\alpha(a)) - 2\mu'(\alpha(a))) + 4\mu'(\alpha(a))= -4\mu'(\alpha(a)) + 4\mu'(\alpha(a))=0
\end{align*}
We used Lemma~\ref{lem:square} for $n=\pm 2$ and hence the commutativity of $\frak M$.
\end{proof}

\subsection{Symmetric quadratic groups and forms}

The simplest case of a quadratic group is where both $*$ and $\dagger$ are trivial. Let's call such a quadratic group $\frak M= (M_{e} \overset{h}{\to} M_{ee} \overset{p}{\to} M_{e})$ {\em symmetric}. Equivalently, this means that $hp=2\id=ph$ (and hence $M_e$ is commutative). Then a quadratic form $(\lambda,\mu):A\to\frak M$ will automatically be {\em symmetric} in the sense that
\[
\lambda(a,a') = \lambda(a',a)  \quad \text{ and } \quad \mu(-a) = \mu(a)\quad \forall \ a\in A.
\]
We call $\mu$ a {\em symmetric quadratic refinement} of $\lambda$ and obtain a category of symmetric quadratic forms with a forgetful functor $R$ to the category of symmetric forms. It is not hard to show that the construction in Theorem~\ref{thm:commutative universal} gives a universal symmetric quadratic refinement $\mu^c_\lambda$ for any given symmetric bilinear form $\lambda$. More precisely, 

\begin{thm}\label{thm:symmetric universal}
Any symmetric bilinear form $\lambda$ has a universal symmetric quadratic refinement. In fact, the forgetful functor $R(\lambda,\mu)=\lambda$ from the category $\SQF$ of symmetric quadratic forms to the category $\SF$ of symmetric forms has a left adjoint $L:\HF\to\CQF, L(\lambda)=(\lambda,\mu^c_\lambda)$.
\end{thm}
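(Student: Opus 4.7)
My plan is to deduce Theorem~\ref{thm:symmetric universal} as a direct specialization of Theorem~\ref{thm:commutative universal}. The key observation is that a symmetric quadratic group is precisely a commutative quadratic group in which the involution $*$ on $M_{ee}$ is also trivial, and a symmetric form is precisely a hermitian form whose target has trivial $*$. Thus $\SF \subset \HF$ and $\SQF \subset \CQF$ as full subcategories, and the task reduces to showing (a) that $L(\lambda) = (\lambda, \mu^c_\lambda)$ lands in $\SQF$ whenever $\lambda\in\SF$, and (b) that morphisms in $\SQF$ and $\CQF$ starting from $L(\lambda)$ coincide.

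For (a), I would carry out the construction of $\mu^c_\lambda$ from the proof of Theorem~\ref{thm:commutative universal} in the symmetric setting. Since $* = \id$ on $M_{ee}$, the quotient $M_{ee}/(m^* = m)$ is just $M_{ee}$, so no information is lost on the abelian part. The central extension $M_{ee} \times_\lambda A$ is already abelian because $\lambda$ is symmetric, giving the relation $(m,a)+(m',a') = (m+m'-\lambda(a,a'), a+a') = (m'+m-\lambda(a',a), a'+a) = (m',a')+(m,a)$. Modding by the relations $(-2\lambda(a,a),-2a) = 0$ preserves abelianness and is exactly what forces the anti-involution $\dagger$ to become trivial, as shown in Theorem~\ref{thm:commutative universal}. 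The inherited involution $*$ on $M^c_{ee} = M_{ee}$ is trivial by construction, so $\frak M^c_\lambda$ is indeed a symmetric quadratic group. The verifications that $p^c_\lambda,\,h^c_\lambda,\,\mu^c_\lambda$ are well-defined and satisfy the symmetric quadratic form axioms are identical to those in the commutative case, with the additional symmetry $\mu^c_\lambda(-a) = \mu^c_\lambda(a)$ following from triviality of $\dagger$ (Lemma~\ref{lem:symmetries}).

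For (b), suppose $(\lambda',\mu') \in \SQF$ and $(\alpha,\beta_{ee}) : \lambda \to \lambda'$ is a morphism in $\SF$. Regarding this as a morphism in $\HF$, Theorem~\ref{thm:commutative universal} supplies a unique $\beta_e : M^c_e \to M'_e$ extending it to a morphism in $\CQF$. Since $\frak M^c_\lambda$ and $\frak M'$ are both symmetric quadratic groups, $\beta = (\beta_e,\beta_{ee})$ is automatically a morphism in $\SQF$. Uniqueness in $\SQF$ follows from uniqueness in $\CQF$. This exhibits the natural bijection
\[
\SQF(L(\lambda), (\lambda',\mu')) \;\cong\; \SF(\lambda, R(\lambda',\mu'))
\]
required for the adjunction.

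The main expected obstacle is essentially bookkeeping: verifying that the construction of Theorem~\ref{thm:commutative universal} truly restricts to $\SQF$, in particular that the target quadratic group genuinely has trivial $*$ after the imposed relations (and not merely trivial $\dagger$). Since $*$ on $M_{ee}$ was trivial from the outset and the construction alters only $M_e$, this is immediate, so no genuinely new calculation is needed beyond what appears in the commutative case.
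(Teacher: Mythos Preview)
Your proposal is correct and follows precisely the approach the paper indicates: the paper states just before the theorem that ``it is not hard to show that the construction in Theorem~\ref{thm:commutative universal} gives a universal symmetric quadratic refinement,'' and your argument spells out exactly this specialization by identifying $\SF$ and $\SQF$ as full subcategories of $\HF$ and $\CQF$ and checking that $L$ restricts. No additional ideas are needed beyond those in the commutative case, as you correctly note.
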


\begin{rem}\label{rem:injective}
We observe that the map $p^c_\lambda: M_{ee}\to M^c_e$ is a monomorphism in this easiest, symmetric, case, just like it was in the hardest, non-commutative, case. This can be seen by noting that the first set of relations $(m^*,0)=(m,0)$ is redundant if the involution $*$ is trivial. Therefore, if $0=p^c_\lambda(m) = [m,0]$ then $(m,0)$ must come from the second set of relations, i.e.\ it must be of the form 
\[
(m,0)=(-2 \lambda(a,a), -2a)  \quad \text{ for some } a\in A.
\]
 This implies that $2a=0$ and hence $\lambda(2a,a)=0$ which in turn means $m=0$.
\end{rem}

\begin{exs}\label{ex:quadratic}
If $M_{ee}=M_{e}$ then $p=\id$ and $h =2\id$ are canonical choices.  If $A$ is free then a quadratic refinement with this choice of $(M_{e}, h, p)$ exists exactly for even forms. Moreover, if $M_{ee}$ has no 2-torsion than a quadratic refinement is uniquely determined by the given even form. 

At the other extreme, consider $M_{ee}=M_{e}=\z$. If $A$ is a finite dimensional $\z$-vectorspace then non-singular symmetric bilinear forms $\lambda$ are classified by their rank and their {\em parity}, i.e.~whether they are even or odd, or equivalently, whether they admit a quadratic refinement or not. In the even case, quadratic forms $(\lambda,\mu)$ are classified by rank and {\em Arf invariant}. This additional invariant takes values in $\z$ and vanishes if and only if $\mu$ takes more elements to zero than to one (thus the Arf invariant is sometimes referred to as the ``democratic invariant''). 

If $\lambda$ is odd then the following trick allows to still define Arf invariants and it motivates the introduction of $M_{e}$. Let again $A$ be a finite dimensional $\z$-vectorspace, $M_{ee}=\z$ and $M_{e}=\Z_4$ with the unique nontrivial homomorphisms $h, p$. Then any non-singular symmetric bilinear form $\lambda$ has a quadratic refinement $\mu$ and quadratic forms $(\lambda,\mu)$ are classified by rank and an Arf invariant with values in $\Z_8$. If $\lambda$ is even, this agrees with the previous Arf invariant via the linear inclusion $\z \subset \Z_8$.
\end{exs}

\subsection{Presentations for universal quadratic groups} \label{sec:presentations}
Consider a central extension of groups
\[
1 \to M \to G \overset{\pi}{\to} A \to 1
\]
and assume that $M$ and $A$ have presentations $ \langle m_i | n_j \rangle$ respectively $ \langle a_k | b_\ell \rangle $. To avoid confusion, we write groups multiplicatively for a while and switch back to additive notation when returning to hermitian forms. 

It is well known how to get a presentation for $G$: Pick a section $s:A\to G$ with $s(1)=1$ which is not necessary multiplicative. Write a relation in $A$ as $b_\ell = a'_1 \cdots a'_r$, where $a'_i$ are generators of $A$ or their inverses, then
\[
1=s(1)=s(b_\ell)=s(a'_1)\cdots s(a'_r) \, w_\ell
\]
where $w_\ell=w_\ell(m_i)$ is a word in the generators of $M$. This equation follows from the fact that the projection $\pi$ is a homomorphism and for simplicity we have identified $M$ with its image in $G$. We obtain the presentation
\[
G = \langle m_i, \alpha_k\, |\, n_j, [m_i,\alpha_k], \beta_\ell \, w_\ell \rangle 
\]
where $\alpha_k:=s(a_k)$ and $\beta_\ell:=s(a'_1)\cdots s(a'_r)$ is the same word in the $\alpha_k$ as $b_\ell$ is in the $a_k$. The commutators $[m_j,\alpha_k]$ arise because we are assuming that the extension is central, in a more general case one would write out the action of $A$ on $M$. 

It will be useful to rewrite this presentation as follows. Observe that the section $s$ satisfies
\[
s(a_1a_2)=s(a_1)s(a_2) c(a_1,a_2)
\]
for a uniquely determined {\em cocycle} $c: A \times A\to M$. By induction one shows that
\begin{align*}
& s(a_1\cdots a_r) = s(a_1\cdots a_{r-1})s(a_r) c(a_1\cdots a_{r-1},a_r)= \cdots = \\
&s(a_1)\cdots s(a_r) c(a_1, a_2) c(a_1a_2,a_3)c(a_1a_2a_3,a_4)\cdots c(a_1\cdots a_{r-1},a_r)
\end{align*}
Comparing this expression with the definition of the word $w_\ell$ in the presentation of $G$, it follows that 
\[
w_\ell= c(a'_1, a'_2) c(a'_1a'_2,a'_3)\cdots c(a'_1\cdots a'_{r-1},a'_r) \ \in M
\]
so that the above presentation of $G$ is entirely expressed in terms of the cocycle $c$ (and does not depend on the section $s$ any more). 

Now assume that $\lambda:A \times A\to M$ is a hermitian form with respect to an involution $*$ on $M$. Then the universal (non-commutative) quadratic group $M_e$ from Example~\ref{ex:universal} is a central extension as above with cocycle $c=\lambda$. Reverting to additive notation, we see that 
\begin{align*}
w_\ell&= \lambda(a'_1, a'_2) + \lambda(a'_1+a'_2,a'_3)+ \cdots + \lambda(a'_1+\cdots +a'_{r-1},a'_r)\\
&= \sum_{1\leq i < j \leq r} \lambda(a'_i,a'_j)
\end{align*}
where the ordering of the summands is irrelent because $M$ is central in $M_e$. Summarizing the above discussion, we get.
\begin{lem} \label{lem:presentation}
The universal (non-commutative) quadratic group $M_e$ corresponding to the hermitian form $\lambda$ has a presentation
\[
M_e= \langle m_i, \alpha_k\, |\, n_j, [m_i,\alpha_k], \beta_\ell + \sum_{1\leq i < j \leq r} \lambda(a'_i,a'_j) \rangle 
\]
where the generators $m_i, \alpha_k$ and words $n_j,\beta_\ell$ are defined as above. Moreover, the universal quadratic refinement $\mu: A\to M_e$ is a section of the central extension and hence $\alpha_k = \mu(a_k)$ for the generators $a_k$ of $A$.
\end{lem}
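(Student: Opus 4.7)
My plan is to observe that the statement is essentially a specialization of the general central-extension presentation machinery developed in the paragraphs preceding the lemma, applied to the universal example of Section~\ref{ex:universal}. So the task reduces to matching up the pieces carefully.

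First, I would recall from Example~\ref{ex:universal} that the universal quadratic group $M_e = M_{ee}\times_\lambda A$ sits in a central extension
\[
1\to M_{ee}\to M_e\overset{\pi}{\to} A\to 1,
\]
with $\pi(m,a)=a$ and $M_{ee}$ embedded via $m\mapsto (m,0)=p_\lambda(m)$. I would then pick the section $s:= \mu_\lambda$, so that $s(a)=(0,a)$ and $s(0)=0$; by the defining addition on the semidirect product,
\[
s(a_1)+s(a_2)=(0,a_1)+(0,a_2)=(-\lambda(a_1,a_2),a_1+a_2)=s(a_1+a_2)-\lambda(a_1,a_2),
\]
so the cocycle associated to $s$ is precisely $c=\lambda$. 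This is the essential identification: the abstract cocycle $c$ in the general machinery is the given hermitian form.

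Next I would apply the general presentation recipe derived earlier in the subsection. Starting from presentations $M_{ee}=\langle m_i\mid n_j\rangle$ and $A=\langle a_k\mid b_\ell\rangle$, the discussion preceding the lemma shows that every central extension as above has a presentation
\[
\langle m_i,\alpha_k\mid n_j,[m_i,\alpha_k],\beta_\ell+w_\ell\rangle,
\]
where $\alpha_k=s(a_k)$, $\beta_\ell$ is the word in the $\alpha_k$ obtained from the relator $b_\ell$ by replacing $a_k$ with $\alpha_k$, and $w_\ell$ is the iterated cocycle telescope. Specializing to $c=\lambda$ and noting that $M_{ee}$ is central (so the order of summation is immaterial), the telescoping sum collapses by bilinearity to
\[
w_\ell \;=\; \sum_{1\leq i<j\leq r}\lambda(a'_i,a'_j),
\]
exactly as recorded. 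This is done by induction on $r$, using at each step that $\lambda(a'_1+\cdots+a'_{k-1},a'_k)=\sum_{i<k}\lambda(a'_i,a'_k)$ by linearity in the first slot.

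The only mildly subtle point, and the step I would be most careful about, is handling the case where some $a'_i$ is the formal inverse of a generator $a_k$ (i.e.\ $-a_k$ in additive notation). Here I would use that $\lambda$ is $\Z$-bilinear, so $\lambda(-a_k,\cdot)=-\lambda(a_k,\cdot)$, and that in the section formula $s(-a)=-s(a)-\lambda(a,-a)+\lambda(a,a)$ expansions remain consistent; checking this just means verifying the cocycle identity $c(a,-a)+c(0,0)=c(a,0)+c(-a,0)\cdot(\text{shift})$, which is immediate from bilinearity. With that in hand, all expressions are independent of the choice of expansion, the centrality of $p_\lambda(M_{ee})$ makes the order of summation irrelevant, and the presentation and the formula $\alpha_k=\mu(a_k)$ follow.
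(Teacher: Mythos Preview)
Your proposal is correct and follows essentially the same route as the paper: the lemma is stated there as a summary of the preceding discussion, and you reproduce that discussion by identifying the section $s=\mu_\lambda$, computing its cocycle as $c=\lambda$, and specializing the general central-extension presentation with the bilinear telescoping of $w_\ell$. Your side remark on inverses is slightly garbled (the displayed formula for $s(-a)$ is off), but since you correctly fall back on bilinearity of $\lambda$ this does not affect the argument, and the paper does not treat that point explicitly either.
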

As discussed in Theorem~\ref{thm:commutative universal}, we get the universal {\em commutative} quadratic group $M_e^c$ for $\lambda$ by adding the relations $(m^*,0) = (m,0)$ and $(-2\lambda(a,a),-2a)=0$. The latter can be rewritten in the form $2(0,a)=(\lambda(a,a),0)$. In the current notation, where $(m,0)$ is identified with $m\in M$, we obtain the relations
\[
m^* = m  \quad \text{and} \quad 2 \mu(a)= \lambda(a,a)  \ \in M_e^c \quad\forall\ m\in M, a\in A.
\]
Recallling that $A,M$ and $M_e^c$ are {\em commutative} groups, we can write our presentation in that category
to obtain
\begin{lem} \label{lem:commutative presentation}
The universal (commutative) quadratic group $M^c_e$ corresponding to the hermitian form $\lambda$ has a presentation
\[
M^c_e= \langle m_i, \mu(a_k)\, |\, n_j, \beta_\ell + \sum_{1\leq i < j \leq r} \lambda(a'_i,a'_j) , m^* = m  ,2 \mu(a)= \lambda(a,a) \ \rangle 
\]
Here $a_k$ are generators of $A$ and for every relation $b_\ell = \sum_{i=1}^r a'_i$ in $A$, we defined the word $\beta_\ell:= \sum_{i=1}^r \mu(a'_i)$.
\end{lem}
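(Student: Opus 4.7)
The plan is to start from the presentation of the non-commutative universal quadratic group $M_e$ given by Lemma~\ref{lem:presentation} and then impose the extra relations that cut $M_e$ down to its commutative quotient $M_e^c$, as already identified in the proof of Theorem~\ref{thm:commutative universal}. Those relations, in the pair notation $(m,a) \in M_{ee}\times_\lambda A$, are $(m^*,0) = (m,0)$ and $(-2\lambda(a,a), -2a) = 0$. The first translates directly to $m^* = m$ in $M_e^c$, while the second, after moving $-2\lambda(a,a)$ to the other side and using $(0,a) = \mu(a)$, reads $2\mu(a) = \lambda(a,a)$ in additive notation. This is the content of the two new families of relations in the statement.

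The key steps, in order, are the following. First, I would inherit the generators $m_i, \alpha_k$ and the relations $n_j$, $[m_i,\alpha_k]$, and $\beta_\ell + \sum_{1\leq i<j\leq r}\lambda(a'_i,a'_j)$ directly from Lemma~\ref{lem:presentation}. Second, I would adjoin the relations $m^* = m$ (one for each generator $m_i$, extended linearly) and $2\mu(a) = \lambda(a,a)$. Third, I would observe that in Theorem~\ref{thm:commutative universal} these extra relations are precisely what is needed to force the anti-involution $\dagger = ph-\id$ on $M_e$ to become trivial, and therefore $M_e^c$ is commutative; hence the commutator relations $[m_i,\alpha_k]$ may be replaced by the blanket statement that we work in the category of abelian groups, which justifies the rewriting of the presentation in additive form with $\alpha_k$ renamed $\mu(a_k)$.

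A minor subtlety to address is that the relation $2\mu(a) = \lambda(a,a)$ is stated for \emph{all} $a \in A$, whereas in a presentation one only controls the generators $a_k$. I would check that imposing $2\mu(a_k) = \lambda(a_k,a_k)$ on generators, together with the quadratic relation built into $\mu$ (property (i) of Definition~\ref{def:quadratic}) and the identity $ph = 2\id$ in the commutative case, propagates to $2\mu(a) = \lambda(a,a)$ for every $a \in A$ by induction on word length, as in the proof of Lemma~\ref{lem:square}. This ensures that the presentation written in the lemma is equivalent to the one obtained by imposing the relations only on generators.

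The main obstacle, such as it is, will be bookkeeping: faithfully transporting the pair notation $(m,a)$ of Example~\ref{ex:universal} into the additive presentation language, ensuring that the cocycle $c = \lambda$ in the central extension is reflected correctly in the word $\beta_\ell + \sum_{i<j}\lambda(a'_i,a'_j)$, and verifying that the quotient by the two new families of relations is what Theorem~\ref{thm:commutative universal} called $M_e^c$. Once this translation is made, the presentation falls out essentially for free.
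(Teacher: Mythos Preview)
Your proposal is correct and follows essentially the same route as the paper: the paper also derives this lemma by starting from the non-commutative presentation of Lemma~\ref{lem:presentation}, adjoining the two families of relations $(m^*,0)=(m,0)$ and $(-2\lambda(a,a),-2a)=0$ from Theorem~\ref{thm:commutative universal}, translating them as $m^*=m$ and $2\mu(a)=\lambda(a,a)$, and then rewriting the presentation in the category of abelian groups. Your extra care in checking that the relation $2\mu(a)=\lambda(a,a)$ propagates from generators to all of $A$, and that the added relations genuinely force commutativity (so the commutator relations can be dropped), goes slightly beyond what the paper spells out but is entirely in its spirit.
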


\subsection{Twisted intersection invariants and a universal quadratic group}

If we apply the universal construction to the symmetric bilinear form on order $n$ rooted trees
\[
\langle \ , \ \rangle:\ \sL'_{n+1} \times \sL'_{n+1} \longrightarrow \cT_{2n}=:(\cT_{2n})_{ee}
\]
we obtain a universal symmetric quadratic refinement 
\[
q:=\mu^c_{\langle \ , \ \rangle}: \sL'_{n+1}\to (\cT_{2n})^c_e
\]
Let us apply the presentation from Lemma~\ref{lem:commutative presentation} to this case.
The generators of $\sL'_{n+1}$ are oriented rooted trees $J$ of order $n$ and the relations are the AS and IHX relations: $J +\bar J=0$ if $\bar J$ is $J$ with reversed orientation, and the usual local relation $I + H +X=0$. Similarly, $\cT_{2n}$ is generated by  oriented un-rooted trees $t$ of order $2n$, modulo the AS and IHX relations. Putting these together, we see that $(\cT_{2n})^c_e$ is generated by trees $t$ and $q(J)$ and the relations are
\begin{itemize}
\item[$n_j:$] Ordinary AS and IHX relations for unrooted trees $t$,
\item[$\beta_\ell:$] Twisted AS and IHX relations for rooted trees $J$: 
\begin{align*}
&q(J) + q(\bar J) + \langle J, \bar J \rangle =0\\
&q(I) + q(H) + q(X) + \langle I,H \rangle + \langle I,X \rangle + \langle H,X \rangle =0
\end{align*}
\item[$c:$] $2q(J) = \langle J, J \rangle $
\end{itemize}
The last relation $c$ builds in the commutativity of the universal group as discussed above because we are in the easiest, symmetric, setting where the involution $*$ is trivial. Using relation $c$, the twisted AS relation simply becomes
\[
q(\bar J) = q(-J) = q(J)
\]
which was expected since we are in the symmetric case. 

The reader may be surprised to discover that this presentation is exactly the home $ \cT^\iinfty_{2n}$ for the intersection invariants $\tau_{2n}^\iinfty$ of order $2n$ twisted Whitney towers \cite{CST1}. 

\begin{cor} \label{cor:quadratic} There is an isomorphism of symmetric quadratic groups
\[
(\cT_{2n})^c_e \cong  \cT^\iinfty_{2n}
\]
which is the identity on $ \cT_{2n}$ and takes $q(J)$ to $J^\iinfty$ for rooted trees $J$. 
The quadratic group structure on  $\cT^\iinfty_{2n}$ is given by the homomorphisms $\cT_{2n}\overset{p}{\to} \cT^\iinfty_{2n}\overset{h}{\to}\cT_{2n}$ which are uniquely characterized (for unrooted trees $t$ and rooted trees $J$) by
\[
p(t)=t  \quad \text{and} \quad h(t) = 2\cdot t, \ h(J^\iinfty) = \langle J,J \rangle 
\]
\end{cor}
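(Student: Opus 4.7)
The strategy is to observe that the presentation of $(\cT_{2n})^c_e$ derived in the paragraphs immediately preceding the corollary, by specializing Lemma~\ref{lem:commutative presentation} to the universal inner product $\langle\cdot,\cdot\rangle\colon\sL'_{n+1}\times\sL'_{n+1}\to\cT_{2n}$, coincides with the defining presentation of $\cT^\iinfty_{2n}$ from Definition~\ref{def:tree-groups}(iv). Thus the proof is essentially a direct comparison of generators and relations, plus matching up the two structure maps.

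First I would set up the candidate isomorphism $\Phi\colon(\cT_{2n})^c_e\to\cT^\iinfty_{2n}$ as the identity on $\cT_{2n}$, sending $q(J)\mapsto J^\iinfty$ for every oriented rooted tree $J$. To verify $\Phi$ is well-defined, I would check the relations $n_j$ (ordinary AS/IHX on unrooted trees), the involution relation $m^*=m$ (automatic since the involution $*$ on $\cT_{2n}$ is trivial), the relation $c\colon 2q(J)=\langle J,J\rangle$, and the twisted $\beta_\ell$-relations one-by-one against the defining relations of $\cT^\iinfty_{2n}$. The twisted AS relation $q(J)+q(\bar J)+\langle J,\bar J\rangle=0$ reduces, using bilinearity $\langle J,\bar J\rangle=-\langle J,J\rangle$ together with $2q(J)=\langle J,J\rangle$, to $q(\bar J)=q(J)$, matching $(-J)^\iinfty=J^\iinfty$. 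The twisted IHX relation coming from the presentation is the instance of the quadratic refinement property $q(J_1+J_2)=q(J_1)+q(J_2)+\langle J_1,J_2\rangle$ applied to the Jacobi identity in $\sL'_{n+1}$, which with the appropriate orientation conventions on $I,H,X$ yields exactly $I^\iinfty=H^\iinfty+X^\iinfty-\langle H,X\rangle$.

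Next I would construct the inverse $\Psi\colon\cT^\iinfty_{2n}\to(\cT_{2n})^c_e$ by the same formulas on generators, and verify that the three defining relations of $\cT^\iinfty_{2n}$ hold in $(\cT_{2n})^c_e$: the symmetric refinement property $q(-J)=q(J)$ follows from Lemma~\ref{lem:symmetries} applied to the trivial anti-involution $\dagger$ (or directly from the twisted AS as above), the equation $2q(J)=\langle J,J\rangle$ is built into the presentation, and the twisted IHX is the same quadratic-refinement computation run backwards. Since $\Phi$ and $\Psi$ agree with the identity on the respective generating sets, they are mutually inverse.

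Finally, to match the quadratic group structures I would specialize the formulas from Example~\ref{ex:universal} and Theorem~\ref{thm:commutative universal}. Under $\Phi$, the map $p^c_\lambda\colon M_{ee}=\cT_{2n}\to M^c_e$, $m\mapsto[m,0]$, becomes the generator-inclusion $t\mapsto t$, while $h^c_\lambda[m,a]=m+m^*+\lambda(a,a)$ becomes the homomorphism determined on generators by $t\mapsto 2t$ and $J^\iinfty\mapsto\langle J,J\rangle$. Well-definedness of $h$ and $p$ on $\cT^\iinfty_{2n}$ is automatic because $\Phi$ is an isomorphism of abelian groups and these were already well-defined on $(\cT_{2n})^c_e$, and uniqueness of these characterizations is immediate since $\cT^\iinfty_{2n}$ is generated by $\cT_{2n}$ together with the $\iinfty$-trees. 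The main obstacle is purely bookkeeping: aligning the sign and orientation conventions for $I,H,X$ in the two versions of the twisted IHX relation so that the quadratic-refinement identity reproduces Definition~\ref{def:tree-groups}(iv) on the nose; everything else is a tautology given the universal property established in Theorem~\ref{thm:commutative universal}.
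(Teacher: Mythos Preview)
Your proposal is correct and follows essentially the same route as the paper: the corollary is stated as an immediate consequence of the presentation of $(\cT_{2n})^c_e$ derived in the preceding paragraphs from Lemma~\ref{lem:commutative presentation}, and the paper simply observes that this presentation coincides with that of $\cT^\iinfty_{2n}$ in Definition~\ref{def:tree-groups}(iv). Your write-up is more explicit than the paper's (which leaves the sign reconciliation in the twisted IHX relation to the reader), but the method---matching generators and relations and reading off $p,h$ from the universal formulas---is identical.
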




\begin{thebibliography}{blah} 

\bibitem{B1} {\bf H Baues}, {\it Quadratic functors and metastable homotopy}, J. Pure Appl. Algebra, vol. 91 (1994)  49--107.

\bibitem{B2}{\bf H Baues}, {\it On the group of homotopy equivalences of a manifold.} Transactions of the Amer. Math. Soc. vol. 348 (1996)  4737--4773.


%









\bibitem{CST0}  {\bf J Conant}, {\bf R Schneiderman}, {\bf P\ Teichner}, {\it Higher-order intersections in low-dimensional topology}, preprint (2010), http://arxiv.org/abs/1011.6026

\bibitem{CST1} {\bf J Conant}, {\bf R Schneiderman}, {\bf P\ Teichner}, {\it Geometric Filtrations of Classical Link Concordance}, preprint (2010) http://arxiv.org/abs/1101.3477 

\bibitem{CST2} {\bf J Conant}, {\bf R Schneiderman}, {\bf P\ Teichner}, {\it Milnor Invariants and Twisted Whitney Towers}, preprint  (2010) math.GT arXiv.

\bibitem{CST3} {\bf J Conant}, {\bf R Schneiderman}, {\bf P\ Teichner}, {\it Tree homology and a conjecture of Levine,} preprint (2010) http://arxiv.org/abs/1012.2780



\bibitem{CST5} {\bf J Conant},  {\bf R Schneiderman}, {\bf P\ Teichner}, {\it 
Geometric filtrations of string links and homology cylinders}, preprint (2012) arXiv:1202.2482v1 [math.GT].

\bibitem{WTCCL} {\bf J Conant}, {\bf R Schneiderman}, {\bf P\ Teichner}, {\it Whitney tower concordance of classical links}, preprint (2012) arXiv:1202.3463v1 [math.GT].








%



%





%

%


%




\bibitem{L2}  {\bf J Levine}, { \it Addendum and correction to: 
Homology cylinders: an enlargement of the mapping class group}, 
Alg. and Geom. Topology 2 (2002) 1197--1204. 

\bibitem{L3} {\bf J Levine}, {\it Labeled binary planar trees and quasi-Lie algebras}, 
Alg. and Geom. Topology 6 (2006) 935--948. 











\bibitem{Sa} {\bf N Sato}, { \it Cobordisms of semi-boundary links}, 
Topology Appl. 18 (1984) 225--234. 

\bibitem{R} {\bf A\ Ranicki}, {\it Algebraic Poincar\'{e} cobordism} Topology, geometry, and algebra: interactions and new directions (Stanford, CA, 1999),
Contemp. Math., 279, A.M.S.,  213--255.

\bibitem{S1} {\bf R\ Schneiderman}, { \it Whitney towers and Gropes in 4--manifolds}, 
Trans. Amer. Math. Soc. 358 (2006), 4251--4278. 










\bibitem{Wa} \textbf{C T C Wall}, {\em Surgery on compact manifolds.}


\end{thebibliography}
\end{document}